\begin{document}
\bibliographystyle{plain}
\newcommand{\bea}{\begin{eqnarray}}
\newcommand{\eea}{\end{eqnarray}}
\newcommand{\bfmN}{{\mbox{\boldmath{$N$}}}}
\newcommand{\bfmx}{{\mbox{\boldmath{$x$}}}}
\newcommand{\bfmv}{{\mbox{\boldmath{$v$}}}}
\newcommand{\se}{\setcounter{equation}{0}}
\newtheorem{corollary}{Corollary}[section]
\newtheorem{example}{Example}[section]
\newtheorem{definition}{Definition}[section]
\newtheorem{theorem}{Theorem}[section]
\newtheorem{proposition}{Proposition}[section]
\newtheorem{lemma}{Lemma}[section]
\newtheorem{remark}{Remark}[section]
\newtheorem{result}{Result}[section]
\newcommand{\vtwo}{\vskip 4ex}
\newcommand{\vthree}{\vskip 6ex}
\newcommand{\vfour}{\vspace*{8ex}}
\newcommand{\hone}{\mbox{\hspace{1em}}}
\newcommand{\hon}{\mbox{\hspace{1em}}}
\newcommand{\htwo}{\mbox{\hspace{2em}}}
\newcommand{\hthree}{\mbox{\hspace{3em}}}
\newcommand{\hfour}{\mbox{\hspace{4em}}}
\newcommand{\von}{\vskip 1ex}
\newcommand{\vone}{\vskip 2ex}
\newcommand{\n}{\mathfrak{n} }
\newcommand{\m}{\mathfrak{m} }
\newcommand{\q}{\mathfrak{q} }
\newcommand{\aF}{\mathfrak{a} }

\newcommand{\kl}{\mathcal{K}}
\newcommand{\p}{\mathcal{P}}
\newcommand{\Lt}{\mathcal{L}}
\newcommand{\bv}{{\mbox{\boldmath{$v$}}}}
\newcommand{\bc}{{\mbox{\boldmath{$c$}}}}
\newcommand{\bx}{{\mbox{\boldmath{$x$}}}}
\newcommand{\br}{{\mbox{\boldmath{$r$}}}}
\newcommand{\bs}{{\mbox{\boldmath{$s$}}}}
\newcommand{\bb}{{\mbox{\boldmath{$b$}}}}
\newcommand{\ba}{{\mbox{\boldmath{$a$}}}}
\newcommand{\bn}{{\mbox{\boldmath{$n$}}}}
\newcommand{\bp}{{\mbox{\boldmath{$p$}}}}
\newcommand{\by}{{\mbox{\boldmath{$y$}}}}
\newcommand{\bz}{{\mbox{\boldmath{$z$}}}}
\newcommand{\be}{{\mbox{\boldmath{$e$}}}}
\newcommand{\proof}{\noindent {\sc Proof :} \par }
\newcommand{\bP}{{\mbox{\boldmath{$P$}}}}

\newcommand{\M}{\mathcal{M}}
\newcommand{\R}{\mathbb{R}}
\newcommand{\Q}{\mathbb{Q}}
\newcommand{\Z}{\mathbb{Z}}
\newcommand{\N}{\mathbb{N}}
\newcommand{\C}{\mathbb{C}}
\newcommand{\xar}{\longrightarrow}
\newcommand{\ov}{\overline}
 \newcommand{\rt}{\rightarrow}
 \newcommand{\om}{\omega}
 \newcommand{\wh}{\widehat }
 \newcommand{\wt}{\widetilde }
 \newcommand{\g}{\Gamma}
 \newcommand{\lm}{\lambda}

\newcommand{\eN}{\EuScript{N}}
\newcommand{\ncom}{\newcommand}
\newcommand{\norm}{\|\;\;\|}
\newcommand{\inp}[2]{\langle{#1},\,{#2} \rangle}
\newcommand{\nrm}[1]{\parallel {#1} \parallel}
\newcommand{\nrms}[1]{\parallel {#1} \parallel^2}
%\begin{document}
\title{On the Convergence of Quasilinear Viscous Approximations with Degenerate Viscosity }% and Their Convergence Rate}
\author{ Ramesh Mondal\\ Email: rmondal86@gmail.com\\ Department of Mathematics, University of Kalyani, Kalyani, India-741235.\footnote{Keywords: Conservation laws, Quasilinear Viscous Approximations.}}
\maketitle{}
\begin{abstract}
We use Velocity Averaging lemma to show that the almost everywhere limit of quasilinear viscous approximations is the unique entropy
solution (in the sense of {\it F. Otto}) of the corresponding scalar conservation laws on a bounded domain in $\mathbb{R}^{d}$, where the viscous term is
of the form $\varepsilon\,div\left(B(u^{\varepsilon})\nabla u^{\varepsilon}\right)$ and $B\geq 0$.
 \end{abstract}
 \section{Introduction}
 Let $\Omega$ be a bounded domain in $\mathbb{R}^{d}$ with smooth boundary $\partial \Omega$. For $T >0$, denote $\Omega_{T}:= \Omega\times(0,T)$. 
 We write the initial boundary value problem $\left(\mbox{IBVP}\right)$ for scalar conservation laws given by
 \begin{subequations}\label{ivp.cl}
\begin{eqnarray}
  u_t + \nabla\cdot f(u) =0& \mbox{in }\Omega_T,\label{ivp.cl.a}\\
u(x,t)= 0&\mbox{on}\,\,\partial \Omega\times(0,T),\label{ivp.cl.b}\\
  u(x,0) = u_0(x)& x\in \Omega.\label{ivp.cl.c}
  \end{eqnarray}
\end{subequations}
where $f=(f_{1},f_{2},\cdots,f_{d})$ is the flux function and $u_{0}$ is the initial condition.\\
Consider the IBVP for the generalized viscosity problem 
\begin{subequations}\label{De.regularized.IBVP}
\begin{eqnarray}
 u^\varepsilon_{t} + \nabla \cdot f(u^{\varepsilon}) = \varepsilon\,\nabla\cdot\left(B(u^\varepsilon)\,\nabla u^\varepsilon\right)
 &\mbox{in }\Omega_{T},\label{De.regularized.IBVP.a} \\
    u^\varepsilon(x,t)= 0&\,\,\,\,\mbox{on}\,\, \partial \Omega\times(0,T),\label{De.regularized.IBVP.b}\\
u^{\varepsilon}(x,0) = u_{0\varepsilon}(x)& x\in \Omega,\label{De.regularized.IBVP.c}
\end{eqnarray}
\end{subequations}
indexed by $\varepsilon>0$. The problem of the form \eqref{De.regularized.IBVP.a} is posed in \cite{MR2150387}. The aim of this article is to prove that the {\it a.e.} limit of sequence of solutions $\left(u^{\varepsilon}\right)$ to \eqref{De.regularized.IBVP}(called quasilinear viscous approximations) is the unique entropy solution for IBVP \eqref{ivp.cl} in the sense of Otto \cite{MR1387428}. \\
The equation of the form \eqref{De.regularized.IBVP.a} appears in viscous shallow water problem \cite{GQCHEN} and in the equations of gas dynamics for viscous, heat conducting fluid in eulerian coordinates \cite{JSmoller}. The equation of the form \eqref{De.regularized.IBVP.a} is also very important in the study of numerical analysis. Kurganov and  Liu \cite{AKurganov} proposed a finite volume method by introducing an adaptive way of adding viscosity in the shock region for solving general multidimentional system of conservation laws. Their scheme captures numerically stable solution of hyperbolic conservation laws. The coefficient of the added numerical viscosity appears as a function of the solution and therefore the convergence of the scheme to the entropy solution is similar to the problem considered in this article. Mishra and Spinolo \cite{SMishra} have designed schemes by incorporating the explicit information about the underlying viscous operators for systems in one space dimension. As a consequence, these schemes approximate the viscosity solution of conservation laws.\\
\vspace{0.1cm}\\
We now give Hypothesis on $f$, $u_{0}$, $B$. \\
\noindent{\bf Hypothesis G}
\begin{enumerate}
\item[(a)] Let $f\in \left(C^{2}(\mathbb{R})\right)^d$, $f^\prime\in \left(L^\infty(\mathbb{R})\right)^d$, and denote 
$$\|f^\prime\|_{\left(L^\infty(\mathbb{R})\right)^d}:=\displaystyle\max_{1\leq j\leq d}\left(\sup_{y\in\mathbb{R}}|f^\prime(y)|\right).$$
\item[(b)] Let $B\in C^1(\mathbb{R})\cap L^\infty(\mathbb{R})$, and $B\geq 0$.
\item[(c)] Let $u_{0}$ be in $ L^{\infty}(\Omega)$. There exists a sequence $\left(u_{0\varepsilon}\right)\in\mathcal{D}(\Omega)$ and a constant $A>0$ such that $\|u_{0\varepsilon}\|\leq A$. Denote $I:=[-A, A]$. 
\item[(d)] Let $1<p\leq 2$ and $p^{\prime}>1$ be such that $\frac{1}{p}+\frac{1}{p^{\prime}}=1$. Let $\psi\in L^{p^{\prime}}\left(\mathbb{R}\right)$ having essential compact support.
Let $f$ satisfy the following 
\begin{equation*}
\begin{split}
\mbox{meas}\Big\{c\in\mbox{supp}\,\psi,\,\,\tau+\,\left(f_{1}^{\prime}(c),f_{2}^{\prime}(c),\cdots,f_{d}^{\prime}(c)\right)\cdot\xi=0\Big\}=0\,\, \\
\mbox{for all}\,\left(\tau,\xi\right)\in\mathbb{R}\times\mathbb{R}^{d}\,\, \mbox{with}\,\,\tau^{2} +\left|\xi\right|^{2}=1.
\end{split}
\end{equation*}
\end{enumerate}
We recall \textbf{Hypothesis D} from \cite{Ramesh_Mondal} as we use \textbf{Hypothesis D} in Section \ref{NonDegenerate.Section.1} to conclude many results. \\
\vspace{0.1cm}\\
\noindent{\bf Hypothesis D}
\begin{enumerate}
	\item[(a)] Let $f\in \left(C^{2}(\mathbb{R})\right)^d$, $f^\prime\in \left(L^\infty(\mathbb{R})\right)^d$, and denote 
	$$\|f^\prime\|_{\left(L^\infty(\mathbb{R})\right)^d}:=\displaystyle\max_{1\leq j\leq d}\left(\sup_{y\in\mathbb{R}}|f^\prime(y)|\right).$$
	\item[(b)] Let $B\in C^1(\mathbb{R})\cap L^\infty(\mathbb{R})$, and there exists an $r>0$ such that $B\geq r$.
	\item[(c)] Let $u_{0}$ be in $ L^{\infty}(\Omega)$. There exists a sequence $\left(u_{0\varepsilon}\right)\in\mathcal{D}(\Omega)$ and a constant $A>0$ such that $\|u_{0\varepsilon}\|\leq A$. Denote $I:=[-A, A]$. 
	\item[(d)] Let $1<p\leq 2$ and $p^{\prime}>1$ be such that $\frac{1}{p}+\frac{1}{p^{\prime}}=1$. Let $\psi\in L^{p^{\prime}}\left(\mathbb{R}\right)$ having essential compact support.
	Let $f$ satisfy the following 
	\begin{equation*}
	\begin{split}
	\mbox{meas}\Big\{c\in\mbox{supp}\,\psi,\,\,\tau+\,\left(f_{1}^{\prime}(c),f_{2}^{\prime}(c),\cdots,f_{d}^{\prime}(c)\right)\cdot\xi=0\Big\}=0\,\, \\
	\mbox{for all}\,\left(\tau,\xi\right)\in\mathbb{R}\times\mathbb{R}^{d}\,\, \mbox{with}\,\,\tau^{2} +\left|\xi\right|^{2}=1.
	\end{split}
	\end{equation*}
\end{enumerate}
%\begin{remark}
%\textbf{In the last day discussion, we concluded to keep the initial data $u_{0}$ in the function space $C_{0}\left(\Omega\right)$, {i.e.,} the set of all continuous functions which are zero on $\partial\Omega$. But my observation is if we take $u_{0}\in L^{\infty}\left(\Omega\right)$ the the convergence result is true due to Lemma \ref{hypothesisDinitialdatalem}.}
%\end{remark}
%\vspace{0.1cm}
%\noindent{\bf Hypothesis G:}
%\begin{enumerate}
%	\item Let $f\in \left(C^4(\R)\right)^d$, $f^\prime\in \left(L^\infty(\R)\right)^d$, and denote 
%	$$\|f^\prime\|_{\left(L^\infty(\R)\right)^d}:=\max_{1\leq j\leq d}\,\sup_{y\in\R}|f^\prime_j(y)|.$$
%	\item Let $B\in C^3(\R)\cap L^\infty(\R)$, and there exists an $r>0$ such that $B\geq r$.
%	\item $u_0\in BV_0(\Omega)\cap L^\infty(\Omega)$, where $BV_0(\Omega)$ as described in \textbf{Hypothesis E} and denote $I:=[-A, A]$.\\
%\end{enumerate}	
It is well known that it is incorrect to assume boundary data pointwise for hyperbolic problems posed on bounded domain. This is because any smooth solution is constant on any maximal segment of characteristic and we assume that this segment intersects $\Omega\times\left\{0\right\}$ and $\partial\Omega\times (0,T)$. Then the informations which propagate through this segment from initial datum to the boundary $\partial\Omega\times (0,T)$ may not match with the prescribed boundary condition. In other words IBVP for hyperbolic problems are ill-posed in general \cite{MR542510}.  Thus it is required to give a meaning to the way in which the boundary conditions are realized. In BV-setting, Bardos-LeRoux-Nedelec prove the existence, uniqueness of entropy solutions by introducing a concept of entropy solutions which includes boundary term. This concept of entropy solution is known as BLN entropy condition. Thus BLN entropy condition gives a way to interpret the boundary conditions whenever the entropy solution is of bounded variations. We have proved that the {\it a.e.} limit of quasilinear viscous approximations is the unique entropy solution in the sense of Bardos-Leroux-Nedelec in \cite{Ramesh} by establishing BV-estimates of quasilinear viscous approximations with $B$ as mentioned in \textbf{Hypothesis D} and BV initial data. BLN entropy condition is not meaningful for all those solutions which are not of bounded variations as the BLN condition uses the trace of the solution on the boundary $\partial\Omega\times (0,T)$. Otto \cite{MR1387428} generalizes the BLN entropy condition to the $L^{\infty}-$ setting by introducing a new concept of entropy solutions. In this concept of entropy solutions, Otto introduces ``boundary entropy-entropy flux" pair and ask the boundary condition to hold in an integral form. In the $L^{\infty}-$ setting, Otto also proves the existence, uniqueness of entropy solution satisfying the concept of entropy introduced by him. As a result, Otto's entropy condition gives another way to interpret the boundary conditions. We refer the reader to Martin \cite{Martin}, Carrillo \cite{Carrillo}, Vallet \cite{Vallet} in the $L^{\infty}$-setting, and to Porretta and Vovelle \cite{Porretta}, Ammar et al. \cite{Ammar} in $L^{1}$-setting for extensions of the concepts of entropy solutions to such contexts. Dubois and LeFloch \cite{Dubois_Lefloch} generalizes BLN entropy condition by introducing a boundary entropy inequality in the domain $\left\{(x,t)\,:\,x>0,\,t>0\right\}$ for general viscosity term as considered in this article for system of hyperbolic equations.\\
\vspace{0.1cm}\\
In this context, we have the following result.
\begin{theorem}\label{Deg.paper2.compensatedcompactness.theorem1}
 {\rm Let $f,\,B,\,u_{0}$ satisfy \textbf{Hypothesis G}. Then the {\it a.e.} limit of the quasilinear viscous approximations 
 $\left(u^{\varepsilon}\right)$ is the unique entropy solution of IBVP \eqref{ivp.cl} in the sense of {\it Otto}\cite{MR1387428}.}
\end{theorem}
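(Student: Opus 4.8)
The plan is to obtain the result as a limiting case of the non-degenerate theory (Hypothesis D) developed in Section~\ref{NonDegenerate.Section.1}, combined with a velocity-averaging compactness argument. First I would establish the uniform estimates on the family $(u^\varepsilon)$: the maximum principle for \eqref{De.regularized.IBVP.a} gives $\|u^\varepsilon\|_{L^\infty(\Omega_T)}\le A$ (using $B\ge 0$ and $u_{0\varepsilon}\in\mathcal{D}(\Omega)$ with $\|u_{0\varepsilon}\|\le A$), and multiplying the equation by $u^\varepsilon$ and integrating by parts yields the parabolic energy bound $\varepsilon\int_{\Omega_T} B(u^\varepsilon)|\nabla u^\varepsilon|^2\,dx\,dt\le C$, hence $\varepsilon\,B(u^\varepsilon)|\nabla u^\varepsilon|^2$ is bounded in $L^1$. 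Note that because $B$ may vanish we do \emph{not} get a bound on $\sqrt{\varepsilon}\,\nabla u^\varepsilon$ directly; instead we control $\sqrt\varepsilon\,\nabla\beta(u^\varepsilon)$ where $\beta'=\sqrt{B}$, and more importantly $\varepsilon\,\nabla\cdot(B(u^\varepsilon)\nabla u^\varepsilon)=\varepsilon\,\Delta\!\int_0^{u^\varepsilon}\!\!B$ is small in a negative Sobolev norm.

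Next I would set up the kinetic/velocity-averaging framework. Introduce the kinetic function $\chi(u^\varepsilon;v)=\mathbf 1_{0<v<u^\varepsilon}-\mathbf 1_{u^\varepsilon<v<0}$ and, testing \eqref{De.regularized.IBVP.a} against $\eta'$ for smooth entropies $\eta$, derive the kinetic equation
\[
\partial_t \chi + f'(v)\cdot\nabla_x \chi = \partial_v m^\varepsilon + \text{(terms of order }\varepsilon),
\]
where $m^\varepsilon\ge 0$ is the entropy-dissipation measure and the right-hand side is relatively compact in $H^{-1}_{loc}$ uniformly in $\varepsilon$ (this is where the $L^1$ bound on $\varepsilon B(u^\varepsilon)|\nabla u^\varepsilon|^2$ is used). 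The non-degeneracy condition Hypothesis~G(d) — identical to Hypothesis~D(d) — then lets me invoke the velocity averaging lemma to conclude that $(u^\varepsilon)$ is relatively compact in $L^1_{loc}(\Omega_T)$, so a subsequence converges a.e.; denote the limit $u$. Passing to the limit in the entropy inequalities for $(u^\varepsilon)$ shows $u$ is a weak entropy solution of \eqref{ivp.cl.a} in the interior.

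The main obstacle — and the part requiring the most care — is the \emph{boundary condition} in Otto's sense: one must show the a.e.\ limit $u$ satisfies Otto's integral boundary-entropy inequality on $\partial\Omega\times(0,T)$, not merely the interior entropy inequality. For Hypothesis~D (strictly positive $B$) this is presumably done in Section~\ref{NonDegenerate.Section.1}; for Hypothesis~G the difficulty is that the viscous boundary layer analysis degenerates where $B(u^\varepsilon)\to 0$, so one cannot simply quote the non-degenerate boundary estimate. My strategy here is an approximation-in-$B$ argument: replace $B$ by $B_\delta:=B+\delta$, which satisfies Hypothesis~D, obtain for each fixed $\delta>0$ a solution family converging to an Otto entropy solution $u_\delta$, derive $\delta$-uniform bounds (the $L^\infty$ bound is $\delta$-independent, and the boundary-entropy defect is controlled uniformly), and then let $\delta\to 0$; by Otto's uniqueness theorem \cite{MR1387428} each $u_\delta$ is \emph{the} entropy solution, and an $L^1$-contraction/stability estimate in $\delta$ identifies $\lim_{\delta\to0}u_\delta$ with $u$. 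Finally, uniqueness of the limit (independent of subsequence) follows from Otto's uniqueness result, upgrading subsequential a.e.\ convergence to convergence of the whole family. The delicate point throughout is ensuring the boundary-entropy inequality survives both limits $\varepsilon\to0$ and $\delta\to0$; I expect this to hinge on a careful treatment of the normal-derivative boundary terms $\varepsilon\,B(u^\varepsilon)\,\partial_\nu u^\varepsilon$ via the ``boundary entropy–entropy flux'' pairs, showing they have the correct sign in the limit.
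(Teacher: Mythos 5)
Your interior analysis follows essentially the same route as the paper: work through the non-degenerate regularization $B+\delta$ of Section \ref{NonDegenerate.Section.1}, get the $\delta$-uniform $L^{\infty}$ bound (Lemma \ref{Degenerate.Maximum.Principle}) and the energy bound on $\sqrt{\varepsilon}\,\|\sqrt{B(u^{\varepsilon,\delta})}\,\partial_{x_j}u^{\varepsilon,\delta}\|_{L^{2}(\Omega_T)}$ (Theorem \ref{Deg.Compactness.lemma.1}), write the Kruzhkov/kinetic dissipation identity as a $\delta\to 0$ limit (Theorem \ref{Degenerate.measure.1}), apply the velocity averaging lemma under Hypothesis G(d) to get $L^{1}(\Omega_T)$ compactness, and pass to the limit in the weak formulation and the entropy inequality (Steps 1--2 of the paper's proof). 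One caveat of presentation: you cannot multiply \eqref{De.regularized.IBVP.a} by $u^{\varepsilon}$ and integrate by parts directly, because $u^{\varepsilon}$ is only defined as the $\delta\to 0$ limit of the classical solutions $u^{\varepsilon,\delta}$ of \eqref{regularized.IBVP}; all such manipulations must be carried out at the level of $u^{\varepsilon,\delta}$ and then passed to the limit, which is how the paper states every estimate (as a $\lim_{\delta\to 0}$ of $\delta$-level quantities).

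The genuine gap is in your treatment of Otto's boundary condition \eqref{definition.otto.eqn2}. The paper keeps a single order of limits throughout ($\delta\to 0$ at fixed $\varepsilon$, then $\varepsilon\to 0$) and obtains the boundary and initial conditions directly from the boundary entropy--entropy flux pairs $(H_l,Q_l)$, following the proof of Theorem 1.1 of \cite{Ramesh_Mondal} (Step 3); no interchange of limits is needed. You instead reverse the order: for fixed $\delta$ send $\varepsilon\to 0$ to get Otto solutions $u_\delta$ (which, by uniqueness, are all equal to the same entropy solution $u^{*}$, so the subsequent ``limit $\delta\to 0$'' is vacuous), and you then must identify $u=\lim_{\varepsilon\to 0}\lim_{\delta\to 0}u^{\varepsilon,\delta}$ with $u^{*}=\lim_{\delta\to 0}\lim_{\varepsilon\to 0}u^{\varepsilon,\delta}$. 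The ``$L^{1}$-contraction/stability estimate in $\delta$'' you invoke for this identification is not a contraction property (contraction is with respect to the data, not the diffusion coefficient); what is actually needed is a continuous-dependence-on-$B$ estimate for $\|u^{\varepsilon,\delta}-u^{\varepsilon,\delta'}\|_{L^{1}}$ that is uniform in $\varepsilon$, i.e.\ a Kuznetsov-type estimate on a bounded domain with Dirichlet boundary layers. Nothing in the paper's toolbox provides this (note that the gradient bound of Theorem \ref{Compactness.lemma.1} degenerates like $\delta^{-1/2}$), and your assertion that ``the boundary-entropy defect is controlled uniformly'' is precisely the statement requiring proof. As written, the boundary step therefore does not close; the direct route --- test $u^{\varepsilon,\delta}$ with boundary entropy pairs, pass $\delta\to 0$ using the $\delta$-uniform bounds, then $\varepsilon\to 0$ --- is what the paper does and avoids the limit-interchange problem altogether.
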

In \cite{Ramesh_Mondal}, we use the method of Compensated Compactness for space dimension $d=1$ and $d=2$ and the Kinetic formulation for the multidimensional case to show the existence of {\it a.e.} convergent subsequence of the quasilinear viscous approximations with $L^{\infty}\left(\Omega\right)$ initial data and \textbf{Hypothesis D}. In \textbf{Hypothesis D}, we assume $B\geq r>0$. In this work, we use velocity averaging lemma to show the existence of {\it a.e.} convergent subsequence of the quasilinear viscous approximations with $L^{\infty}\left(\Omega\right)$ initial data and \textbf{Hypothesis G}. In \textbf{Hypothesis G}, we assume $B\geq 0$. The main difficulties to prove the existence of an {\it a.e.} convergent subsequence of $\left(u^{\varepsilon}\right)$ with \textbf{Hypothesis G} are the following. 
\begin{enumerate}
	\item The $L^{\infty}\left(\Omega_{T}\right)-$ estimate of the quasilinear viscous approximations $\left(u^{\varepsilon}\right)$.
	\item The quasilinear viscous approximations $\left(u^{\varepsilon}\right)$ satisfies the Kruzhkov entropy dissipation \cite{Lions} in the sense of distributions. 
\end{enumerate}
The $L^{\infty}\left(\Omega_{T}\right)-$ estimates of the quasilinear viscous approximations $\left(u^{\varepsilon}\right)$ is proved in Lemma \ref{Degenerate.Maximum.Principle}. The idea of the proof is that we need to use the $L^{\infty}\left(\Omega_{T}\right)-$ estimates of the sequence of solutions of IBVP \eqref{regularized.IBVP} with \textbf{Hypothesis D}, $r=\delta>0$ and the convergence of  $u^{\varepsilon,\delta}\to u^{\varepsilon}$ as $\delta\to 0$ in $L^{1}\left(\Omega_{T}\right)$. \\
The sequence of solutions $\left(u^{\varepsilon}\right)$ satisfies a Kruzhkov entropy dissipation is proved in Theorem \ref{Degenerate.measure.1}. Let $\left(\eta,q\right)$ be the Kruzhkov entropy-entropy flux pair. Then the Kruzhkov entropy dissipation like property is given by the following.
$$\frac{\partial}{\partial t}\eta(u^{\varepsilon};c) + \displaystyle\sum_{j=1}^{d}\,\frac{\partial}{\partial x_{j}}q_{j}(u^{\varepsilon};c) = \displaystyle\lim_{\delta\to 0}\left(\varepsilon\,\eta^{\prime}\left(u^{\varepsilon,\delta};c\right)\displaystyle\sum_{j=1}^{d}\frac{\partial}{\partial x_{j}}\left(B(u^{\varepsilon,\delta})\,\frac{\partial u^{\varepsilon,\delta}}{\partial x_{j}}\right)\right)$$ 
in $\mathcal{D}^{\prime}\left(\Omega\times (0,T)\right)$. This Kruzhkov's entropy dissipation like property is proved in Theorem \ref{Degenerate.measure.1} by following the computation of Proposition 4.1 of \cite{Ramesh_Mondal} and
passing to the limit as $\delta\to 0$. Then the extraction of {\it a.e.} convergent subsequence of $\left(u^{\varepsilon}\right)$ by applying Velocity Averaging lemma follows from similar arguments of \cite{Ramesh_Mondal}. Let $\Omega=\mathbb{R},\, T=\infty$. In \cite{Markati2}, Markati considers the viscosity term of the form $\varepsilon\left(B(u^{\varepsilon})\,u_{x}\right)_{x}$ and prove that the {\it a.e.} limit of the viscosity approximations is the unique Kruzhkov's entropy solution to the corresponding scalar conservation laws. In \cite{Markati1}, the Markati and Natalini study viscosity problem with degenarate gradient dependent viscosity of the form $\varepsilon\left(B(u_{x})\right)_{x}$. They also prove the convergence of the viscosity approximations to the unique Kruzhkov's entropy solution of  scalar conservation laws using Compensated Compactness. In \cite{Ramesh}, we have established the BV-estimates for quasilinear viscous approximations with BV initial data. In \cite{Ramesh_Mondal}, we have shown that the {\it a.e.} limit of quasiliear viscous approximations is the unique entropy solution in the sense of Otto \cite{MR1387428} with \textbf{Hypothesis D}. In this article, we prove the convergence of quasilinear viscous approximations $\left(u^{\varepsilon}\right)$ to Otto's  entropy solution \cite{MR1387428} for any space dimension and for any bounded domain $\Omega$ of $\mathbb{R}^{d}$, $0<T<\infty$ and assuming $B\geq 0$ as mentioned in \textbf{Hypothesis G}.\\
In the proof of Theorem \ref{Deg.paper2.compensatedcompactness.theorem1}, we need to first prove that the {\it a.e.} limit of $\left(u^{\varepsilon}\right)$ is a weak solution for the IBVP \eqref{ivp.cl}. Then we secondly prove that the {\it a.e.} limit of $\left(u^{\varepsilon}\right)$ satisfies an entropy inequality mentioned in the definition of Otto's entropy solution. Since we work with $B\geq 0$ in place of $B\geq r>0$, we need to use the boundedness of the sequence $$\left(\sqrt{\varepsilon}\Big\| \sqrt{B\left(u^{\varepsilon,\delta}\right)}\,\frac{\partial u^{\varepsilon,\delta}}{\partial x_{j}}\Big\|_{L^{2}(\Omega_{T})}\right)$$
in place of the boundedness of the sequence  $$\left(\sqrt{\varepsilon}\Big\| \frac{\partial u^{\varepsilon,\delta}}{\partial x_{j}}\Big\|_{L^{2}(\Omega_{T})}\right)$$
 for $j=1,2,\cdots,d$ and we encounter double limit passing situations\,\,$\left(\mbox{firstly}\,\delta\to 0\,\,\mbox{followed by}\,\,\varepsilon\to 0\right)$\,\, in the proof of weak solution and entropy inequality. We handle it with careful clever computaions in the proof. This double limit passing situation is not present in the proof of the entropy solution when we worked with \textbf{Hypothesis D} in \cite{Ramesh_Mondal}. The rest of the proof of Theorem \ref{Deg.paper2.compensatedcompactness.theorem1} follows from \cite{Ramesh_Mondal}. 
\vspace{0.1cm}\\
The plan for the paper are the following. In Section \ref{NonDegenerate.Section.1}, we pose an IBVP for viscosity problem with non-degenerate viscosity $B+\delta$. Then we prove existence, uniqueness and useful properties of solutions of IBVP \eqref{regularized.IBVP} for viscosity problems. We also prove the compactness properties of sequence of solutions to IBVP \eqref{regularized.IBVP} with respect to the index $\delta$. In Section \ref{NonDegenerate.Section.2}, we prove the existence of {\it a.e.} convergent subsequence of the quasilinear viscous approximations $\left(u^{\varepsilon}\right)$ using Velocity Averaging lemma. In Section \ref{Degenerate.Section.3}, we show that the {\it a.e.} limit of $\left(u^{\varepsilon}\right)$ is the unique entropy solution of IBVP \eqref{ivp.cl} in the sense of Otto \cite{MR1387428}.   
\section{Existence, uniqueness, maximum principle and derivative estimates}\label{NonDegenerate.Section.1}
In this entire article, we work with $B$ as mentioned in Hypothesis G from this section onwards. For fixed $\varepsilon>0$, consider the IBVP for the generalized viscosity problem 
\begin{subequations}\label{regularized.IBVP}
	\begin{eqnarray}
	u^{\varepsilon,\delta}_{t} + \nabla \cdot f(u^{\varepsilon,\delta}) = \varepsilon\,\nabla\cdot\left(\left(B(u^{\varepsilon,\delta})+\delta\right)\,\nabla u^{\varepsilon,\delta}\right)
	&\mbox{in }\Omega_{T},\label{regularized.IBVP.a} \\
	u^{\varepsilon,\delta}(x,t)= 0&\,\,\,\,\mbox{on}\,\, \partial \Omega\times(0,T),\label{regularized.IBVP.b}\\
	u^{\varepsilon,\delta}(x,0) = u_{0\delta}^{\varepsilon}(x)& x\in \Omega,\label{regularized.IBVP.c}
	\end{eqnarray}
\end{subequations}
where $\delta>0$. Observe that since $B\geq 0$, the function $B(u^{\varepsilon,\delta}) +\delta$ satisfy the Hypothesis D as $B(u^{\varepsilon,\delta}) +\delta\geq \delta>0$ and the sequence $\left(u_{0\delta}^{\varepsilon}\right)$ is constructed in Lemma \ref{De.hypothesisDinitialdatalem}.\\
\vspace{0.1cm}\\
We begin this section by constructing sequences of approximations $\left(u_{0\varepsilon}\right)$ for the initial data $u_{0}$. The sequence $\left(u_{0\varepsilon}\right)$ mentioned in \textbf{Hypothesis G} are constructed in the following result in view of the discussions from \cite[p.31-p.35]{MR2309679}. We prove the result for any space dimension $d\in\mathbb{N}$. 	
\begin{lemma}\label{hypothesisDinitialdatalem}
	{\rm Let $u_{0}\in L^{\infty}(\Omega)$ and $1\leq p<\infty$. Then there exists a sequence $\left(u_{0\varepsilon}\right)$ in $\mathcal{D}(\Omega)$ and $A> 0$ such that $\|u_{0\varepsilon}\|\leq A$ and $u_{0\varepsilon}\to u_{0}$ in $L^{p}(\Omega)$ as $\varepsilon\to 0$.} 	
\end{lemma}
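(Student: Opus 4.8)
The plan is to produce the approximating sequence by the standard mollification-and-truncation construction, taking care that the mollified functions are compactly supported inside $\Omega$ (so that they lie in $\mathcal{D}(\Omega)$) and that their $L^\infty$ norms do not exceed the $L^\infty$ norm of $u_0$. First I would set $A := \|u_0\|_{L^\infty(\Omega)}$ (or $A := \|u_0\|_{L^\infty(\Omega)}+1$ if one prefers a strict bound to be available later), and extend $u_0$ by zero to all of $\mathbb{R}^d$, calling the extension $\tilde u_0 \in L^\infty(\mathbb{R}^d)$ with $\|\tilde u_0\|_{L^\infty(\mathbb{R}^d)} \le A$ and $\tilde u_0 \in L^p_{\mathrm{loc}}(\mathbb{R}^d)$; since $\Omega$ is bounded, $\tilde u_0 \in L^p(\mathbb{R}^d)$.

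Next I would handle the support issue. Because $\partial\Omega$ is smooth (in particular $\Omega$ has no interior-inaccessible pockets), one can exhaust $\Omega$ by an increasing family of open sets $\Omega_k \Subset \Omega$ with $\overline{\Omega_k}\subset\Omega$ and $\bigcup_k \Omega_k = \Omega$; equivalently, work with $\Omega^\eta := \{x\in\Omega : \operatorname{dist}(x,\partial\Omega) > \eta\}$. Let $\chi_\eta$ be a smooth cutoff that is $1$ on $\Omega^{2\eta}$ and supported in $\Omega^{\eta}$, and let $\rho_\sigma$ be a standard Friedrichs mollifier supported in the ball of radius $\sigma$. For parameters $\eta,\sigma$ to be linked to $\varepsilon$, define
\begin{equation*}
u_{0}^{(\eta,\sigma)} := \rho_\sigma * \bigl(\chi_\eta\,\tilde u_0\bigr).
\end{equation*}
If $\sigma < \eta$ then $\operatorname{supp} u_{0}^{(\eta,\sigma)} \subset \Omega^{\eta - \sigma} \Subset \Omega$, so $u_{0}^{(\eta,\sigma)} \in C_c^\infty(\Omega) = \mathcal{D}(\Omega)$. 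Moreover $\|\chi_\eta \tilde u_0\|_{L^\infty} \le A$, and convolution with the probability density $\rho_\sigma$ does not increase the $L^\infty$ norm, so $\|u_{0}^{(\eta,\sigma)}\| \le A$ as required.

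For the convergence in $L^p(\Omega)$, I would split the error as
\begin{equation*}
\|u_{0}^{(\eta,\sigma)} - u_0\|_{L^p(\Omega)} \le \|\rho_\sigma * (\chi_\eta \tilde u_0) - \chi_\eta \tilde u_0\|_{L^p(\mathbb{R}^d)} + \|\chi_\eta \tilde u_0 - \tilde u_0\|_{L^p(\Omega)}.
\end{equation*}
The second term tends to $0$ as $\eta \to 0$ by dominated convergence (the integrand is bounded by $(2A)^p \mathbf{1}_{\Omega\setminus\Omega^{2\eta}}$ and $|\Omega\setminus\Omega^{2\eta}|\to 0$, using that $|\partial\Omega|=0$ because $\partial\Omega$ is smooth). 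For fixed $\eta$, the first term tends to $0$ as $\sigma\to 0$ by the classical approximate-identity property of mollifiers in $L^p(\mathbb{R}^d)$ (here $p<\infty$ is used). Hence, choosing for each $\varepsilon > 0$ first $\eta = \eta(\varepsilon)$ small enough to make the second term $< \varepsilon/2$, then $\sigma = \sigma(\varepsilon) < \eta(\varepsilon)$ small enough to make the first term $< \varepsilon/2$, and setting $u_{0\varepsilon} := u_{0}^{(\eta(\varepsilon),\sigma(\varepsilon))}$, we get $u_{0\varepsilon}\in\mathcal{D}(\Omega)$, $\|u_{0\varepsilon}\|\le A$, and $u_{0\varepsilon}\to u_0$ in $L^p(\Omega)$. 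The only mildly delicate point — the ``main obstacle'' such as it is — is bookkeeping the supports so that the cutoff happens \emph{before} mollification and $\sigma<\eta$, which guarantees $u_{0\varepsilon}\in\mathcal{D}(\Omega)$ rather than merely in $W^{1,\infty}$ or in $\mathcal{D}(\mathbb{R}^d)$; everything else is the routine Friedrichs mollification argument, valid verbatim in every space dimension $d\in\mathbb{N}$.
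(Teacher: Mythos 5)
Your construction is correct, and it is essentially the argument the paper relies on: the paper does not write out a proof but appeals to the standard cutoff-plus-mollification construction discussed in the cited reference (Wu--Yin--Wang, pp.~31--35), which is exactly what you carry out, including the $L^\infty$ bound $\|u_{0\varepsilon}\|_{L^\infty}\leq A$ preserved by convolution and the $L^p$ convergence for $p<\infty$.
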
	
Applying Lemma \ref{hypothesisDinitialdatalem} with $u_{0}=u_{0,\varepsilon}$ we conclude the following result.
\begin{lemma}\label{De.hypothesisDinitialdatalem}
	{\rm Let $\delta>0$ and $1\leq p<\infty$. Then, for every $\varepsilon>0$, there exists a sequence $\left(u_{0\varepsilon}^{\delta}\right)$ in $\mathcal{D}(\Omega)$ and $A> 0$ such that $\|u_{0\delta}^{\varepsilon}\|\leq A$ and $u_{0\delta}^{\varepsilon}\to u_{0\varepsilon}$ in $L^{p}(\Omega)$ as $\delta\to 0$.}	
\end{lemma}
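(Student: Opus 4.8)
The plan is to obtain this lemma as a direct corollary of Lemma~\ref{hypothesisDinitialdatalem} by taking, for each fixed $\varepsilon>0$, the function $u_{0\varepsilon}$ in the role of $u_0$. First I would observe that $u_{0\varepsilon}\in\mathcal{D}(\Omega)\subset L^\infty(\Omega)$, so the hypotheses of Lemma~\ref{hypothesisDinitialdatalem} are satisfied. Applying that lemma with the given exponent $p\in[1,\infty)$ produces a sequence $\left(u_{0\delta}^{\varepsilon}\right)_{\delta>0}\subset\mathcal{D}(\Omega)$ and a constant, say $A_{\varepsilon}>0$, with $\|u_{0\delta}^{\varepsilon}\|\le A_{\varepsilon}$ and $u_{0\delta}^{\varepsilon}\to u_{0\varepsilon}$ in $L^{p}(\Omega)$ as $\delta\to0$. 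This already delivers the convergence assertion; the only point requiring attention is that the bounding constant can be chosen to be the same $A$ that occurs in Hypothesis~G(c), independently of $\varepsilon$.

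To secure the uniform bound I would revisit the construction underlying Lemma~\ref{hypothesisDinitialdatalem} (the mollification-together-with-interior-cutoff procedure following \cite[p.31--p.35]{MR2309679}). Each ingredient of that construction — convolution against a nonnegative unit-mass mollifier, and multiplication by a cutoff function valued in $[0,1]$ — is nonexpansive for the $L^{\infty}$-norm. Hence the approximants satisfy $\|u_{0\delta}^{\varepsilon}\|_{L^{\infty}(\Omega)}\le\|u_{0\varepsilon}\|_{L^{\infty}(\Omega)}$, and since Hypothesis~G(c) gives $\|u_{0\varepsilon}\|\le A$ for all $\varepsilon$, we may take $A_{\varepsilon}=A$. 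In particular all the data $u_{0\delta}^{\varepsilon}$ stay inside $I=[-A,A]$, which is what is needed when $B$ is later evaluated at these initial data in \eqref{regularized.IBVP}.

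I expect the convergence $u_{0\delta}^{\varepsilon}\to u_{0\varepsilon}$ in $L^{p}(\Omega)$ to be entirely routine, being immediate from Lemma~\ref{hypothesisDinitialdatalem}. The one mild obstacle is the bookkeeping of the constant: one must avoid merely quoting the statement of the previous lemma (which would only furnish an $\varepsilon$-dependent constant) and instead exploit the $L^{\infty}$-nonexpansiveness of the smoothing steps so that the single constant $A$ from Hypothesis~G(c) propagates through. No compactness or PDE input enters here; this is a purely approximation-theoretic observation that sets up the non-degenerate regularization \eqref{regularized.IBVP}.
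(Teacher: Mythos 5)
Your proposal is correct and follows essentially the same route as the paper, which simply applies Lemma~\ref{hypothesisDinitialdatalem} with $u_{0}=u_{0\varepsilon}$ (the paper gives no further argument). Your extra observation that the mollification/cutoff construction is $L^{\infty}$-nonexpansive, so the single constant $A$ of Hypothesis~G(c) can be taken independently of $\varepsilon$, is a point the paper leaves implicit and is a sensible addition rather than a different method.
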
	
\subsection{Existence and Uniqueness of Solutions}
Applying a result from \cite{lad-etal_68a}, we conclude the following existence of a unique classical solution for IBVP \eqref{regularized.IBVP}.
\begin{theorem}\cite[p.452]{lad-etal_68a}[\textbf{Unique Classical Solution}]\label{ExistenceofClassicalLadyzenskajap452}
	{\rm Let $B$ be as in Hypothesis G. Let $f$, $B+\delta$, $u_{0\delta}^{\varepsilon}$ satisfy Hypothesis D. Then there exists a unique solution $u^{\varepsilon,\delta}$ of generalized viscosity problem \eqref{regularized.IBVP} in the space
		$C^{2+\beta,\frac{2+\beta}{2}}(\overline{\Omega_T})$. Further, for each $i=1,2,\cdots,d$ the second order partial derivatives $u^{\varepsilon,\delta}_{x_i t}$ belong to $L^2(\Omega_T)$.}  
\end{theorem}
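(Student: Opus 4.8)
The statement is the classical solvability theorem for uniformly parabolic quasilinear equations applied to the non-degenerate viscosity $B+\delta$, so the plan is to put \eqref{regularized.IBVP} in the form covered by \cite{lad-etal_68a}, verify the hypotheses there, and invoke it; I indicate the ingredients. First rewrite \eqref{regularized.IBVP.a} in non-divergence form,
\begin{equation*}
u^{\varepsilon,\delta}_{t}+\sum_{j=1}^{d}f_{j}^{\prime}(u^{\varepsilon,\delta})\,u^{\varepsilon,\delta}_{x_{j}}
=\varepsilon\bigl(B(u^{\varepsilon,\delta})+\delta\bigr)\,\Delta u^{\varepsilon,\delta}
+\varepsilon\,B^{\prime}(u^{\varepsilon,\delta})\,\bigl|\nabla u^{\varepsilon,\delta}\bigr|^{2}.
\end{equation*}
Since by Hypothesis G one has $\varepsilon\delta\le \varepsilon(B(u)+\delta)\le \varepsilon(\|B\|_{L^{\infty}(\mathbb{R})}+\delta)$, this equation is uniformly parabolic; the coefficient $u\mapsto\varepsilon(B(u)+\delta)$ is $C^{1}$, $f^{\prime}$ is $C^{1}$, $B^{\prime}$ is continuous, and the lower-order term has the admissible quadratic (``natural'') growth in $\nabla u^{\varepsilon,\delta}$. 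The datum $u_{0\delta}^{\varepsilon}\in\mathcal{D}(\Omega)$ is smooth and compactly supported in $\Omega$, so it vanishes on $\partial\Omega$ together with all its derivatives and every order of compatibility condition at $t=0$ holds, and $\partial\Omega$ is smooth. Hence the hypotheses of \cite[p.452]{lad-etal_68a} are met and the cited result gives a unique $u^{\varepsilon,\delta}\in C^{2+\beta,\frac{2+\beta}{2}}(\overline{\Omega_{T}})$ with $u^{\varepsilon,\delta}_{x_{i}t}\in L^{2}(\Omega_{T})$ for each $i$.

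The existence part of the cited theorem rests on a chain of a priori estimates, which I would recall as follows. (i) An $L^{\infty}$ bound: the equation has no zeroth-order term, and at an interior space-time extremum $\nabla u^{\varepsilon,\delta}=0$, so the first-order term and the quadratic term drop out and the diffusion term has the favourable sign; with the zero lateral condition \eqref{regularized.IBVP.b} and $\|u_{0\delta}^{\varepsilon}\|\le A$ this yields $\|u^{\varepsilon,\delta}\|_{L^{\infty}(\Omega_{T})}\le A$ (the $\delta$-analogue of the maximum principle exploited later in Lemma \ref{Degenerate.Maximum.Principle}). (ii) A gradient bound $\|\nabla u^{\varepsilon,\delta}\|_{L^{\infty}(\Omega_{T})}\le C$ via interior and boundary Bernstein-type estimates, the $L^{\infty}$ bound from (i) keeping the principal coefficient between two positive constants. (iii) A Hölder estimate for $\nabla u^{\varepsilon,\delta}$, treating the equations satisfied by the difference quotients as linear uniformly parabolic equations with bounded measurable coefficients. (iv) Schauder estimates, reading the equation as linear with $C^{\beta,\beta/2}$ coefficients, which upgrade the solution to $C^{2+\beta,\frac{2+\beta}{2}}(\overline{\Omega_{T}})$. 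A Leray--Schauder fixed-point argument built on (i)--(iv) then produces the solution. The genuinely delicate step is (ii): because of the term $\varepsilon B^{\prime}(u^{\varepsilon,\delta})|\nabla u^{\varepsilon,\delta}|^{2}$, a gradient bound is not automatic and must be obtained from the $L^{\infty}$ bound together with boundary gradient estimates on the smooth $\partial\Omega$; this is the part for which the monograph is invoked.

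For uniqueness, if $u_{1},u_{2}\in C^{2+\beta,\frac{2+\beta}{2}}(\overline{\Omega_{T}})$ both solve \eqref{regularized.IBVP}, then $w:=u_{1}-u_{2}$ vanishes on the parabolic boundary; subtracting the two equations and using that $B,B^{\prime},f^{\prime}$ are locally Lipschitz on $[-A,A]$ and that $\nabla u_{1},\nabla u_{2}$ are bounded, $w$ satisfies a linear uniformly parabolic equation with bounded coefficients, so multiplying by $w$, integrating over $\Omega$, discarding the nonnegative dissipation, and applying Gronwall's inequality forces $w\equiv 0$. Finally, for the regularity claim $u^{\varepsilon,\delta}_{x_{i}t}\in L^{2}(\Omega_{T})$: with $u^{\varepsilon,\delta}\in C^{2+\beta,\frac{2+\beta}{2}}(\overline{\Omega_{T}})$ already in hand, $v:=u^{\varepsilon,\delta}_{x_{i}}$ solves the linear parabolic equation obtained by differentiating \eqref{regularized.IBVP.a} in $x_{i}$, whose coefficients and right-hand side are bounded (indeed Hölder), so $v\in W^{2,1}_{2}$ on $\Omega_{T}$ by parabolic $L^{2}$-regularity, i.e. $u^{\varepsilon,\delta}_{x_{i}t}=v_{t}\in L^{2}(\Omega_{T})$; equivalently one tests the equation for $v$ against $v_{t}$ and integrates, the boundary terms being controlled since $u^{\varepsilon,\delta}\equiv 0$ on $\partial\Omega\times(0,T)$ makes the tangential derivatives of $u^{\varepsilon,\delta}_{t}$ vanish there.
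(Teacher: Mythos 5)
Your proposal is correct and follows essentially the same route as the paper, which proves nothing itself but simply invokes the cited theorem of Ladyzhenskaya--Solonnikov--Ural'ceva after noting that $B+\delta\geq\delta>0$ makes the problem uniformly parabolic and that $u_{0\delta}^{\varepsilon}\in\mathcal{D}(\Omega)$ gives smooth, compatible data. Your additional sketch of the internal machinery (a priori $L^{\infty}$, gradient and H\"older estimates, Leray--Schauder, energy uniqueness, and the $L^{2}$ bound on $u^{\varepsilon,\delta}_{x_{i}t}$) is consistent with that reference but goes beyond what the paper records.
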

\subsection{Estimates on Solutions and its derivatives}
We now recall the maximum principle from \cite[p.12]{Ramesh}, {\it i.e.,}
\begin{theorem}[Maximum principle]\label{chap3thm1}
	{\rm Let $f:\mathbb{R}\to\mathbb{R}^{d}$ be a $C^{1}$ function and $u_{0}\in L^{\infty}(\Omega)$. Then any solution $u^{\varepsilon}$ of generalized viscosity problem \eqref{regularized.IBVP} satisfies the bound
		\begin{equation}\label{eqnchap303}
		||u^{\varepsilon}(\cdot,t)||_{L^{\infty}(\Omega)}\,\leq\,||u_{0}||_{L^{\infty}(\Omega)}\,a.e.\,\,t\in(0,T).
		\end{equation}
	}
\end{theorem}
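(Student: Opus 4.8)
The plan is to view \eqref{regularized.IBVP.a} as a uniformly parabolic equation with bounded, continuous coefficients (uniform parabolicity being free since $B(u^{\varepsilon})+\delta\ge\delta>0$) and to run the classical parabolic maximum principle, the only genuinely new ingredient being the disposal of the quasilinear lower-order term. By Theorem \ref{ExistenceofClassicalLadyzenskajap452} the solution is classical on the compact cylinder $\overline{\Omega_{T}}$, so $u^{\varepsilon},\nabla u^{\varepsilon},B(u^{\varepsilon}),B'(u^{\varepsilon}),f'(u^{\varepsilon})$ are all bounded and continuous there; expanding the divergence, \eqref{regularized.IBVP.a} reads
\[
u^{\varepsilon}_{t}+f'(u^{\varepsilon})\cdot\nabla u^{\varepsilon}=\varepsilon\bigl(B(u^{\varepsilon})+\delta\bigr)\Delta u^{\varepsilon}+\varepsilon B'(u^{\varepsilon})\,|\nabla u^{\varepsilon}|^{2}.
\]
Write $M:=\|u_{0}\|_{L^{\infty}(\Omega)}\ge 0$, $u_{0}$ denoting the initial value of the problem.

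For the upper bound $u^{\varepsilon}\le M$ I would argue by contradiction with a linear-in-$t$ penalty. Fix $\kappa>0$ and set $w:=u^{\varepsilon}-M-\kappa t$. On the parabolic boundary $w\le 0$: at $t=0$ one has $w=u_{0}-M\le 0$, while on $\partial\Omega\times(0,T)$ one has $w=-M-\kappa t<0$ because $u^{\varepsilon}$ vanishes there and $M\ge 0$. If $w$ had a positive maximum over $\overline{\Omega_{T}}$, it would be attained at some $(x_{0},t_{0})$ with $x_{0}\in\Omega$ and $0<t_{0}\le T$; there $\nabla u^{\varepsilon}=\nabla w=0$ (so the quadratic term kills itself), the spatial Hessian of $w$ is negative semidefinite so $\Delta u^{\varepsilon}=\Delta w\le 0$, and $u^{\varepsilon}_{t}=w_{t}+\kappa\ge\kappa$. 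Substituting into the displayed identity gives $\kappa\le u^{\varepsilon}_{t}=\varepsilon\bigl(B(u^{\varepsilon})+\delta\bigr)\Delta u^{\varepsilon}\le 0$, a contradiction. Hence $u^{\varepsilon}\le M+\kappa t\le M+\kappa T$ on $\overline{\Omega_{T}}$, and letting $\kappa\downarrow 0$ gives $u^{\varepsilon}\le M$. The bound $u^{\varepsilon}\ge -M$ follows verbatim with $w$ replaced by $-u^{\varepsilon}-M-\kappa t$, noting that the term $-\varepsilon B'(u^{\varepsilon})|\nabla u^{\varepsilon}|^{2}$ again vanishes at the candidate maximum. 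Combining the two bounds, $|u^{\varepsilon}(x,t)|\le M$ for every $(x,t)\in\overline{\Omega_{T}}$, which in particular yields \eqref{eqnchap303}.

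The main (and essentially only) obstacle is the quasilinear term $\varepsilon B'(u^{\varepsilon})|\nabla u^{\varepsilon}|^{2}$, which has no definite sign; the resolution used above is that it is purely quadratic in $\nabla u^{\varepsilon}$ and hence vanishes at any spatial critical point of $u^{\varepsilon}$, so it never disturbs the comparison. Equivalently, and avoiding the contradiction bookkeeping, one may write $|\nabla u^{\varepsilon}|^{2}=\nabla u^{\varepsilon}\cdot\nabla w$ for $w:=u^{\varepsilon}-M$, so that $w$ solves the \emph{linear} parabolic equation $w_{t}+\bigl(f'(u^{\varepsilon})-\varepsilon B'(u^{\varepsilon})\nabla u^{\varepsilon}\bigr)\cdot\nabla w-\varepsilon\bigl(B(u^{\varepsilon})+\delta\bigr)\Delta w=0$ with bounded coefficients and vanishing zeroth-order coefficient, and then invoke the classical weak maximum principle on $\Omega_{T}$ directly; all remaining steps are identical.
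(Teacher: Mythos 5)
Your proof is correct. Note, however, that the paper itself does not prove this theorem: it is recalled verbatim from \cite[p.12]{Ramesh}, so there is no in-paper argument to compare against; your contribution is a self-contained replacement for that citation. Your route --- expand the divergence, observe that the quasilinear term $\varepsilon B'(u^{\varepsilon})|\nabla u^{\varepsilon}|^{2}$ is quadratic in the gradient and hence harmless at a spatial critical point, and run the penalized comparison with $w=u^{\varepsilon}-M-\kappa t$ --- is the standard classical maximum-principle argument and it goes through because Theorem \ref{ExistenceofClassicalLadyzenskajap452} guarantees a $C^{2+\beta,\frac{2+\beta}{2}}(\overline{\Omega_T})$ solution, so all quantities you evaluate pointwise make sense up to the closure. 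Two small points you should tighten: (i) when the maximum of $w$ occurs at $t_{0}=T$ you only get the one-sided inequality $w_{t}\geq 0$ and you need the PDE to hold at $t=T$, which follows by continuity of all terms on $\overline{\Omega_T}$ (or, more simply, argue on $\Omega\times(0,T')$ for every $T'<T$); (ii) as you yourself flag, the ``$u_{0}$'' in the statement must be read as the initial datum $u_{0\delta}^{\varepsilon}$ of problem \eqref{regularized.IBVP}, since that is the datum the classical solution attains --- this is exactly how the paper uses the theorem to deduce Theorem \ref{regularized.chap3thm1} with the bound $A$. Your alternative linearization, writing the equation for $w=u^{\varepsilon}-M$ with drift $f'(u^{\varepsilon})-\varepsilon B'(u^{\varepsilon})\nabla u^{\varepsilon}$ and zero zeroth-order term and invoking the weak maximum principle for linear parabolic equations with bounded coefficients, is an equally valid and arguably cleaner packaging of the same idea.
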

Let $B$ be as in Hypothesis G. For $\delta>0$, $B+\delta$ satisfy Hypothesis D. Applying Theorem \ref{chap3thm1} to regularized viscosity problem \eqref{regularized.IBVP} and using $\|u_{0\delta}^{\varepsilon}\|_{L^{\infty}(\Omega)}
\leq A$, we conclude the next result.
\begin{theorem}\label{regularized.chap3thm1}
	{\rm Let $B$ be as in Hypothesis G. Let $f,\,B+\delta,\,$ and $u_{0\delta}^{\varepsilon}$ satisfy Hypothesis D. Then any solution $u^{\varepsilon,\delta}$ of generalized viscosity problem \eqref{regularized.IBVP}  satisfies the bound
		\begin{equation}\label{regularized.eqnchap303}
		||u^{\varepsilon,\delta}(\cdot,t)||_{L^{\infty}(\Omega_{T})}\,\leq\,A.
		\end{equation}
	}
\end{theorem}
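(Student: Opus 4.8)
\medskip
\noindent\textbf{Proof proposal.} This statement is a direct corollary of the maximum principle recalled in Theorem \ref{chap3thm1}, combined with the uniform bound on the regularized initial data. The plan is the following.

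First I would check that Theorem \ref{chap3thm1} applies to the IBVP \eqref{regularized.IBVP} for fixed $\varepsilon,\delta>0$. By Hypothesis G(a), $f\in\left(C^2(\mathbb{R})\right)^d$ is in particular a $C^1$ map $\mathbb{R}\to\mathbb{R}^d$; by Hypothesis G(b), $B\in C^1(\mathbb{R})\cap L^\infty(\mathbb{R})$ with $B\geq 0$, hence $B+\delta\in C^1(\mathbb{R})\cap L^\infty(\mathbb{R})$ and $B+\delta\geq\delta>0$, so $B+\delta$ satisfies Hypothesis D(b); and by Lemma \ref{De.hypothesisDinitialdatalem} the datum $u_{0\delta}^{\varepsilon}$ lies in $\mathcal{D}(\Omega)\subset L^\infty(\Omega)$ with $\|u_{0\delta}^{\varepsilon}\|\leq A$. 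In particular, by Theorem \ref{ExistenceofClassicalLadyzenskajap452}, a unique classical solution $u^{\varepsilon,\delta}$ exists, so that the maximum principle of Theorem \ref{chap3thm1} is meaningful for it.

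Next I would simply apply the bound \eqref{eqnchap303} to $u^{\varepsilon,\delta}$ with initial datum $u_{0\delta}^{\varepsilon}$, which gives
\begin{equation*}
\|u^{\varepsilon,\delta}(\cdot,t)\|_{L^\infty(\Omega)}\;\leq\;\|u_{0\delta}^{\varepsilon}\|_{L^\infty(\Omega)}\;\leq\;A\qquad \mbox{for a.e. }t\in(0,T),
\end{equation*}
the last inequality being the uniform initial bound from Lemma \ref{De.hypothesisDinitialdatalem} (equivalently Hypothesis D(c)). Taking the essential supremum over $t\in(0,T)$ then yields $\|u^{\varepsilon,\delta}\|_{L^\infty(\Omega_T)}\leq A$, which is precisely \eqref{regularized.eqnchap303}.

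I do not anticipate any genuine obstacle here: the substantive work is already contained in Theorem \ref{chap3thm1}. The only two points deserving care are (i) verifying that $B+\delta$ inherits the structural conditions of Hypothesis D so that Theorem \ref{chap3thm1} legitimately applies to \eqref{regularized.IBVP}, and (ii) observing that the constant $A$ is independent of both $\delta$ and $\varepsilon$ — this uniformity being exactly what makes the estimate usable when passing to the limits $\delta\to 0$ and then $\varepsilon\to 0$ later in the paper.
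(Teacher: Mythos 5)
Your proposal is correct and follows exactly the paper's route: verify that $B+\delta$ (together with $f$ and $u_{0\delta}^{\varepsilon}$) satisfies Hypothesis D, apply the maximum principle of Theorem \ref{chap3thm1} to \eqref{regularized.IBVP}, and use $\|u_{0\delta}^{\varepsilon}\|_{L^{\infty}(\Omega)}\leq A$ to conclude \eqref{regularized.eqnchap303}. Your added remarks on the existence of the classical solution and on the $(\varepsilon,\delta)$-independence of $A$ are consistent with, and slightly more explicit than, the paper's one-line argument.
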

The viscous approximations $\left(u^{\varepsilon,\delta}\right)$ obtained in
Theorem \ref{ExistenceofClassicalLadyzenskajap452} satisfies the following estimate. This is a very useful result.
\begin{theorem}\label{Compactness.lemma.1}
	{\rm Let $B$ be as in Hypothesis G. Let $f,\,\,B+\delta,\,\,u_{0\delta}^{\varepsilon}$ satisfy Hypothesis D. Let $u^{\varepsilon,\delta}$ be the unique solution to generalized 
		viscosity problem \eqref{regularized.IBVP}. Then 
		\begin{eqnarray}\label{uniformnot.compactness.eqn1a}
		\displaystyle\sum_{j=1}^{d} \,\left(\sqrt{\varepsilon}\Big\| \frac{\partial u^{\varepsilon,\delta}}{\partial x_{j}}\Big\|_{L^{2}(\Omega_{T})}\right)^{2} \leq\frac{1}{2\delta}\|u_{0\delta}^{\varepsilon}\|^{2}_{L^{2}(\Omega)}\leq\frac{1}{2\delta} A^2\,\,\mbox{Vol}(\Omega).
		\end{eqnarray}
	}
\end{theorem}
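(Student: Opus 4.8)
The plan is to obtain the estimate \eqref{uniformnot.compactness.eqn1a} by the standard energy method: multiply the PDE \eqref{regularized.IBVP.a} by the solution $u^{\varepsilon,\delta}$ itself and integrate over $\Omega_T$. Since Theorem~\ref{ExistenceofClassicalLadyzenskajap452} guarantees $u^{\varepsilon,\delta}\in C^{2+\beta,\frac{2+\beta}{2}}(\overline{\Omega_T})$ with $u^{\varepsilon,\delta}_{x_i t}\in L^2(\Omega_T)$, all the manipulations below are legitimate. First I would write, for any $t\in(0,T)$,
\[
\int_0^t\!\!\int_\Omega u^{\varepsilon,\delta}\,u^{\varepsilon,\delta}_t\,dx\,ds
+\int_0^t\!\!\int_\Omega u^{\varepsilon,\delta}\,\nabla\cdot f(u^{\varepsilon,\delta})\,dx\,ds
=\varepsilon\int_0^t\!\!\int_\Omega u^{\varepsilon,\delta}\,\nabla\cdot\big((B(u^{\varepsilon,\delta})+\delta)\nabla u^{\varepsilon,\delta}\big)\,dx\,ds.
\]

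**Key steps.** The first term is $\frac12\frac{d}{ds}\|u^{\varepsilon,\delta}(\cdot,s)\|_{L^2(\Omega)}^2$, which integrates to $\frac12\|u^{\varepsilon,\delta}(\cdot,t)\|^2_{L^2(\Omega)}-\frac12\|u_{0\delta}^{\varepsilon}\|^2_{L^2(\Omega)}$. For the flux term, I would introduce the primitive $F_j(v):=\int_0^v y f_j'(y)\,dy$ so that $u^{\varepsilon,\delta}\,\partial_{x_j}f_j(u^{\varepsilon,\delta})=\partial_{x_j}\big(F_j(u^{\varepsilon,\delta})\big)$; since $u^{\varepsilon,\delta}$ vanishes on $\partial\Omega\times(0,T)$ by \eqref{regularized.IBVP.b}, the divergence theorem kills this term entirely. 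For the viscous term, integration by parts in $x$ (again using the zero boundary condition to discard the boundary integral) gives
\[
\varepsilon\int_0^t\!\!\int_\Omega u^{\varepsilon,\delta}\,\nabla\cdot\big((B(u^{\varepsilon,\delta})+\delta)\nabla u^{\varepsilon,\delta}\big)\,dx\,ds
=-\varepsilon\int_0^t\!\!\int_\Omega (B(u^{\varepsilon,\delta})+\delta)\,|\nabla u^{\varepsilon,\delta}|^2\,dx\,ds.
\]
Combining and dropping the nonnegative term $\frac12\|u^{\varepsilon,\delta}(\cdot,t)\|^2_{L^2(\Omega)}$ yields
\[
\varepsilon\int_0^t\!\!\int_\Omega (B(u^{\varepsilon,\delta})+\delta)\,|\nabla u^{\varepsilon,\delta}|^2\,dx\,ds
\leq \frac12\|u_{0\delta}^{\varepsilon}\|^2_{L^2(\Omega)}.
\]
Finally, since $B\geq 0$ (Hypothesis~G(b)), we have $B(u^{\varepsilon,\delta})+\delta\geq\delta$, so the left side dominates $\varepsilon\delta\sum_{j=1}^d\|\partial_{x_j}u^{\varepsilon,\delta}\|^2_{L^2(\Omega_T)}$ after letting $t\uparrow T$; dividing by $\delta$ gives the first inequality in \eqref{uniformnot.compactness.eqn1a}. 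The second inequality follows from $\|u_{0\delta}^{\varepsilon}\|_{L^\infty(\Omega)}\leq A$ (Hypothesis~G(c)/Lemma~\ref{De.hypothesisDinitialdatalem}), whence $\|u_{0\delta}^{\varepsilon}\|^2_{L^2(\Omega)}\leq A^2\,\mathrm{Vol}(\Omega)$.

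**Main obstacle.** There is no serious analytic difficulty here; the regularity from Theorem~\ref{ExistenceofClassicalLadyzenskajap452} makes every integration by parts rigorous and the boundary condition \eqref{regularized.IBVP.b} disposes of all boundary contributions. The only point requiring a little care is the justification that $\int_0^t\!\!\int_\Omega u^{\varepsilon,\delta}u^{\varepsilon,\delta}_t\,dx\,ds = \frac12\|u^{\varepsilon,\delta}(\cdot,t)\|^2_{L^2(\Omega)}-\frac12\|u^{\varepsilon,\delta}(\cdot,0)\|^2_{L^2(\Omega)}$, which is immediate from the smoothness of $u^{\varepsilon,\delta}$ up to $t=0$ and the initial condition \eqref{regularized.IBVP.c}; and keeping track of the factor $\sqrt\varepsilon$ so that the left-hand side of \eqref{uniformnot.compactness.eqn1a} matches $\big(\sqrt\varepsilon\,\|\partial_{x_j}u^{\varepsilon,\delta}\|_{L^2(\Omega_T)}\big)^2 = \varepsilon\,\|\partial_{x_j}u^{\varepsilon,\delta}\|^2_{L^2(\Omega_T)}$ exactly.
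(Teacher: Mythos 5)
Your proof is correct and is exactly the standard energy estimate (multiply by $u^{\varepsilon,\delta}$, integrate, kill the flux and boundary terms via \eqref{regularized.IBVP.b}, use $B+\delta\geq\delta$, then the $L^\infty$ bound on $u_{0\delta}^{\varepsilon}$), which is the argument the paper invokes by referring to Theorem 3.3 of \cite{Ramesh}. No discrepancies to report.
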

A proof of Theorem \ref{Compactness.lemma.1} follows from \cite{Ramesh}.\\
Following the proof of Theorem 3.3 of \cite{Ramesh} and applying $B(u^{\varepsilon,\delta})+\delta\geq B(u^{\varepsilon,\delta})$, we conclude the following result.
\begin{theorem}\label{Deg.Compactness.lemma.1}
	{\rm Let $B$ be as in Hypothesis G. Let $f,\,\,B+\delta,\,\,u_{0\delta}^{\varepsilon}$ satisfy Hypothesis D. Let $u^{\varepsilon,\delta}$ be the unique solution to generalized 
		viscosity problem \eqref{regularized.IBVP}. Then 
		\begin{eqnarray}\label{Deg.uniformnot.compactness.eqn1a}
		\displaystyle\sum_{j=1}^{d} \,\left(\sqrt{\varepsilon}\Big\| \sqrt{B\left(u^{\varepsilon,\delta}\right)}\,\frac{\partial u^{\varepsilon,\delta}}{\partial x_{j}}\Big\|_{L^{2}(\Omega_{T})}\right)^{2} \leq\frac{1}{2}\|u_{0\delta}^{\varepsilon}\|^{2}_{L^{2}(\Omega)}\leq\frac{1}{2} A^2\,\,\mbox{Vol}(\Omega).
		\end{eqnarray}
	}
\end{theorem}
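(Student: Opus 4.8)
The statement to prove is Theorem \ref{Deg.Compactness.lemma.1}, which is an energy estimate for the viscous approximations with the weighting $\sqrt{B(u^{\varepsilon,\delta})}$. The plan is to mimic the standard energy estimate (Theorem 3.3 of \cite{Ramesh}, which is Theorem \ref{Compactness.lemma.1} here), but keep the diffusion coefficient $B(u^{\varepsilon,\delta})+\delta$ intact instead of bounding it below by $\delta$. First I would multiply the PDE \eqref{regularized.IBVP.a} by $u^{\varepsilon,\delta}$ and integrate over $\Omega\times(0,t)$. Since $u^{\varepsilon,\delta}\in C^{2+\beta,(2+\beta)/2}(\overline{\Omega_T})$ by Theorem \ref{ExistenceofClassicalLadyzenskajap452}, all manipulations are justified classically.

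The key steps, in order: (i) The time term gives $\tfrac12\frac{d}{dt}\int_\Omega |u^{\varepsilon,\delta}|^2\,dx$, so integrating in time produces $\tfrac12\|u^{\varepsilon,\delta}(\cdot,t)\|_{L^2(\Omega)}^2-\tfrac12\|u^\varepsilon_{0\delta}\|_{L^2(\Omega)}^2$. (ii) The flux term $\int_\Omega u^{\varepsilon,\delta}\,\nabla\cdot f(u^{\varepsilon,\delta})\,dx$ vanishes: writing $u\,\nabla\cdot f(u)=\nabla\cdot F(u)$ where $F'(u)=u\,f'(u)$ (a primitive of $u f'(u)$), this is a pure divergence, so by the divergence theorem it reduces to a boundary integral that is zero because $u^{\varepsilon,\delta}=0$ on $\partial\Omega\times(0,T)$ from \eqref{regularized.IBVP.b}. (iii) The viscous term, after integration by parts in $x$ and using the zero boundary condition to kill the boundary contribution, yields $-\varepsilon\sum_{j=1}^d\int_0^t\int_\Omega (B(u^{\varepsilon,\delta})+\delta)\left|\frac{\partial u^{\varepsilon,\delta}}{\partial x_j}\right|^2\,dx\,ds$. (iv) Rearranging and dropping the nonnegative term $\tfrac12\|u^{\varepsilon,\delta}(\cdot,t)\|_{L^2(\Omega)}^2$ gives
\begin{equation*}
\varepsilon\sum_{j=1}^d\int_0^t\!\!\int_\Omega \left(B(u^{\varepsilon,\delta})+\delta\right)\left|\frac{\partial u^{\varepsilon,\delta}}{\partial x_j}\right|^2 dx\,ds \;\le\; \tfrac12\|u^\varepsilon_{0\delta}\|_{L^2(\Omega)}^2 .
\end{equation*}
(v) Since $\delta>0$ and $\left|\frac{\partial u^{\varepsilon,\delta}}{\partial x_j}\right|^2\ge 0$, we have $B(u^{\varepsilon,\delta})+\delta\ge B(u^{\varepsilon,\delta})$, so the left side dominates $\varepsilon\sum_{j=1}^d\int_0^t\int_\Omega B(u^{\varepsilon,\delta})\left|\frac{\partial u^{\varepsilon,\delta}}{\partial x_j}\right|^2\,dx\,ds = \sum_{j=1}^d\left(\sqrt{\varepsilon}\big\|\sqrt{B(u^{\varepsilon,\delta})}\,\frac{\partial u^{\varepsilon,\delta}}{\partial x_j}\big\|_{L^2(\Omega\times(0,t))}\right)^2$. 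Letting $t\to T$ and using $\|u^\varepsilon_{0\delta}\|_{L^2(\Omega)}^2\le A^2\,\mbox{Vol}(\Omega)$ from Hypothesis D(c) gives \eqref{Deg.uniformnot.compactness.eqn1a}.

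There is no serious obstacle here; the argument is a routine energy estimate. The only point requiring a little care is the vanishing of the flux term on a bounded domain: one must verify that $u\,\nabla\cdot f(u)$ is genuinely a perfect divergence and that the resulting boundary integral vanishes, which it does since $u^{\varepsilon,\delta}$ has zero trace on $\partial\Omega\times(0,T)$ and $f$ (hence its primitive-type combination) is smooth by Hypothesis G(a). The essential content is simply that the $\delta$ in $B+\delta$ can be discarded since $B\ge 0$; this is exactly the observation flagged in the sentence preceding the theorem, and it lets the $\delta$-dependent bound of Theorem \ref{Compactness.lemma.1} be upgraded to a $\delta$-uniform bound on the $B$-weighted gradient, which is what is needed for the degenerate limit $\delta\to 0$.
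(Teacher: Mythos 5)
Your proof is correct and follows essentially the same route the paper takes: it cites the energy estimate of Theorem 3.3 of \cite{Ramesh} (multiply \eqref{regularized.IBVP.a} by $u^{\varepsilon,\delta}$, integrate, use the zero boundary data to kill the flux and boundary terms) and then drops the $\delta$ via $B(u^{\varepsilon,\delta})+\delta\geq B(u^{\varepsilon,\delta})$, exactly as in your steps (iv)--(v). No gaps; your write-up just spells out the energy computation the paper leaves to the reference.
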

\subsection{Compactness of $\left(u^{\varepsilon,\delta}\right)$.}
We recall Theorem 4.5 from \cite{Ramesh_Mondal}. We apply the next result to $\left(u^{\varepsilon,\delta}\right)$ for fixed $\varepsilon>0$. 
\begin{theorem}\label{Kinetic.Compactness.Result}
	{\rm Let $B$ be as in \textbf{Hypothesis G} and $1<p\leq 2$. Let $f,\,B+\delta,\,u_{0\delta}^{\varepsilon}$ satisfy \textbf{Hypothesis D}. Then, the sequence of  viscous approximations $\left(u^{\varepsilon,\delta}\right)$ which are solutions of IBVP \eqref{regularized.IBVP} lies in a compact subset of  $L^{p}_{loc}\left(\Omega_{T}\right)$.}	
\end{theorem}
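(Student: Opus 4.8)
The plan is to obtain the compactness through the kinetic (BGK-type) formulation of \eqref{regularized.IBVP} followed by the $L^{p}$ velocity averaging lemma, which is exactly the strategy of \cite{Ramesh_Mondal}; here $\varepsilon>0$ is fixed and all bounds are to be uniform in $\delta$. First I would confine the family: by Theorem \ref{regularized.chap3thm1} one has $\|u^{\varepsilon,\delta}\|_{L^{\infty}(\Omega_{T})}\le A$, so $u^{\varepsilon,\delta}(x,t)\in I:=[-A,A]$. Setting $A_{\delta}:=B+\delta$ and $\beta_{\delta}(s):=\int_{0}^{s}A_{\delta}$, equation \eqref{regularized.IBVP.a} reads $u^{\varepsilon,\delta}_{t}+\nabla\cdot f(u^{\varepsilon,\delta})=\varepsilon\,\Delta_{x}\beta_{\delta}(u^{\varepsilon,\delta})$. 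Multiplying by $\eta^{\prime}(u^{\varepsilon,\delta})$ for an arbitrary convex entropy $\eta$, applying the chain rule, and then letting $\eta^{\prime}$ range over test functions (equivalently, differentiating the Kruzhkov identities in the parameter), one arrives at the kinetic equation for $\chi^{\varepsilon,\delta}(x,t,\xi):=\chi(\xi,u^{\varepsilon,\delta}(x,t))$, where $\chi(\xi,v):=\mathbf 1_{0<\xi<v}-\mathbf 1_{v<\xi<0}$ and $a(\xi):=\left(f_{1}^{\prime}(\xi),\dots,f_{d}^{\prime}(\xi)\right)$:
\[
\partial_{t}\chi^{\varepsilon,\delta}+a(\xi)\cdot\nabla_{x}\chi^{\varepsilon,\delta}-\varepsilon\,A_{\delta}(\xi)\,\Delta_{x}\chi^{\varepsilon,\delta}=\partial_{\xi}\mu^{\varepsilon,\delta}\quad\text{in }\mathcal D^{\prime}\left(\Omega_{T}\times\R\right),
\]
with $\mu^{\varepsilon,\delta}\ge 0$ the dissipation measure, that is, the pushforward of $\varepsilon\,A_{\delta}(u^{\varepsilon,\delta})\,|\nabla u^{\varepsilon,\delta}|^{2}\,dx\,dt$ under $(x,t)\mapsto\left(x,t,u^{\varepsilon,\delta}(x,t)\right)$.

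The decisive input is that $\mu^{\varepsilon,\delta}$ has mass bounded uniformly in $\delta$. Indeed, adding Theorem \ref{Deg.Compactness.lemma.1} to $\delta$ times Theorem \ref{Compactness.lemma.1} gives
\[
\iint_{\Omega_{T}\times\R}d\mu^{\varepsilon,\delta}=\varepsilon\left\|\sqrt{B(u^{\varepsilon,\delta})}\,\nabla u^{\varepsilon,\delta}\right\|_{L^{2}(\Omega_{T})}^{2}+\varepsilon\,\delta\left\|\nabla u^{\varepsilon,\delta}\right\|_{L^{2}(\Omega_{T})}^{2}\le A^{2}\,\mathrm{Vol}(\Omega),
\]
even though the bare gradient bound of Theorem \ref{Compactness.lemma.1} degenerates like $\delta^{-1}$. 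Hence $(\mu^{\varepsilon,\delta})_{\delta}$ is bounded in the space of measures on $\overline{\Omega_{T}}\times I$, while $(\chi^{\varepsilon,\delta})_{\delta}$ is bounded in $L^{\infty}$, is supported in $\xi\in I$, and is bounded in $L^{q}(\R_{\xi})$ for every $q$. To work in the interior one multiplies by a cutoff $\varphi\in C_{c}^{\infty}(\Omega_{T})$, which adds to the right-hand side the commutator $-\varepsilon\left[A_{\delta}(\xi)\Delta_{x},\varphi\right]\chi^{\varepsilon,\delta}$; writing it out, this term is at most one $x$-derivative of a family bounded uniformly in $\delta$, hence also admissible for the averaging lemma.

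Finally I would apply the velocity averaging lemma to this equation in the form used in \cite{Ramesh_Mondal}. The full symbol of $\partial_{t}+a(\xi)\cdot\nabla_{x}-\varepsilon A_{\delta}(\xi)\Delta_{x}$ is $i\left(\tau+a(\xi)\cdot\zeta\right)+\varepsilon A_{\delta}(\xi)|\zeta|^{2}$, whose modulus dominates $\left|\tau+a(\xi)\cdot\zeta\right|$, so the (possibly degenerating) diffusion can only help and the relevant non-degeneracy is exactly Hypothesis D(d): $\mathrm{meas}\left\{c\in\mathrm{supp}\,\psi:\ \tau+a(c)\cdot\zeta=0\right\}=0$ for all $(\tau,\zeta)$ with $\tau^{2}+|\zeta|^{2}=1$. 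The averaging lemma then yields that $\int_{\R}\chi^{\varepsilon,\delta}(x,t,\xi)\,\psi(\xi)\,d\xi$ is relatively compact in $L^{p}_{loc}(\Omega_{T})$ for every compactly supported $\psi\in L^{p^{\prime}}(\R)$. Choosing such a $\psi$ with $\psi\equiv 1$ on $I$ and using $\int_{\R}\chi(\xi,v)\,d\xi=v$ for $|v|\le A$, this average coincides with $u^{\varepsilon,\delta}$; hence $(u^{\varepsilon,\delta})$ lies in a compact subset of $L^{p}_{loc}(\Omega_{T})$, as claimed.

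I expect the only real difficulty to be the rigorous justification of the kinetic equation and of the cutoff commutator, together with the book-keeping needed to place $\partial_{\xi}\mu^{\varepsilon,\delta}$ and the commutator term into one negative-order space for which the $L^{p}$ averaging lemma applies, with every constant independent of $\delta$. One genuinely cannot replace averaging by an Aubin--Lions argument here, since the only $\delta$-uniform derivative estimate, $\varepsilon\left\|\sqrt{B(u^{\varepsilon,\delta})}\,\nabla u^{\varepsilon,\delta}\right\|_{L^{2}(\Omega_{T})}^{2}\le C$, degenerates precisely on $\{B(u^{\varepsilon,\delta})=0\}$.
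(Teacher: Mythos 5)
Your overall architecture — kinetic formulation of \eqref{regularized.IBVP} plus the $L^{p}$ velocity averaging lemma, with every bound uniform in $\delta$ for fixed $\varepsilon$ — is the same route the paper takes: the paper offers no argument of its own here, it simply invokes Theorem 4.5 of \cite{Ramesh_Mondal}, whose proof (mirrored in Section 3 for the $\varepsilon$-indexed family) writes the whole viscous term as a kinetic defect, bounds it in measures, represents its $c$-derivative via Lemma \ref{Lion.Parthame.Tadmor.p.178}, and applies Theorem \ref{VelocityAveragingLemma.12}. Your key uniformity observation is exactly right and is in fact the point the citation leaves implicit: adding Theorem \ref{Deg.Compactness.lemma.1} to $\delta$ times Theorem \ref{Compactness.lemma.1} gives $\varepsilon\int_{\Omega_{T}}\left(B(u^{\varepsilon,\delta})+\delta\right)\left|\nabla u^{\varepsilon,\delta}\right|^{2}dx\,dt\leq A^{2}\,\mathrm{Vol}(\Omega)$ independently of $\delta$, while Theorem \ref{regularized.chap3thm1} confines $u^{\varepsilon,\delta}$ to $I=[-A,A]$.

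The genuine gap is your treatment of the second-order term. You keep $-\varepsilon A_{\delta}(\xi)\Delta_{x}\chi^{\varepsilon,\delta}$ on the left and justify the compactness by remarking that $\left|i\left(\tau+a(\xi)\cdot\zeta\right)+\varepsilon A_{\delta}(\xi)\left|\zeta\right|^{2}\right|\geq\left|\tau+a(\xi)\cdot\zeta\right|$, so the diffusion ``can only help''. The lemma actually available (Theorem \ref{VelocityAveragingLemma.12}, i.e. \cite[p.178]{Lions}) is stated for the pure transport operator $\partial_{t}+a(\xi)\cdot\nabla_{x}$ with a right-hand side in the prescribed Bessel-potential form; an operator containing a $\xi$-dependent second-order coefficient is not within its hypotheses, and the symbol-domination remark is a heuristic, not a proof — the frequency-space splitting in the lemma's proof would have to be redone for the modified multiplier (this is precisely the content of averaging lemmas for degenerate parabolic kinetic equations, which you neither cite nor establish). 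The repair stays inside your own setup: since $u^{\varepsilon,\delta}$ is classical, $\nabla_{x}\chi^{\varepsilon,\delta}=\delta\left(\xi-u^{\varepsilon,\delta}\right)\nabla u^{\varepsilon,\delta}$, hence $\varepsilon A_{\delta}(\xi)\Delta_{x}\chi^{\varepsilon,\delta}=\varepsilon\,\nabla_{x}\cdot\left(A_{\delta}(u^{\varepsilon,\delta})\nabla u^{\varepsilon,\delta}\,\delta\left(\xi-u^{\varepsilon,\delta}\right)\right)$, and by Cauchy--Schwarz, the uniform energy bound above and $B\in L^{\infty}(\mathbb{R})$, these vector measures have total variation bounded uniformly in $\delta$ for fixed $\varepsilon$. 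So the diffusion contributes one $x$-derivative of a $\delta$-uniformly bounded family of measures — the same class as $\partial_{\xi}\mu^{\varepsilon,\delta}$ and your cutoff commutators — and the whole right-hand side can then be put in the form $\left(-\Delta_{x,t}+1\right)^{1/2}\left(-\Delta_{c}+1\right)^{r/2}g^{\varepsilon,\delta}$ with $\left(g^{\varepsilon,\delta}\right)$ compact in $L^{p}$, exactly as in \cite{Ramesh_Mondal}; Theorem \ref{VelocityAveragingLemma.12} then gives the claimed $L^{p}_{loc}\left(\Omega_{T}\right)$ compactness. With that replacement your proof is sound; it differs from the paper only in the decomposition (your nonnegative dissipation measure needs the energy estimates, whereas the paper's defect, being $\partial_{t}\eta+\nabla\cdot q$, is bounded using the $L^{\infty}$ bound alone), and either decomposition works.
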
 
For fixed $\varepsilon>0$, aplying Theorem \ref{Kinetic.Compactness.Result} to $\left(u^{\varepsilon,\delta}\right)$, we conclude the following result.
\begin{theorem}\label{Degenerate.Kinetic.Compactness.Result}
{\rm Let $B$ be as in \textbf{Hypothesis G}. Let $f,\,B+\delta,\,u_{0\delta}^{\varepsilon}$ satisfy \textbf{Hypothesis D}. There exists a subsequence of $\left(u^{\varepsilon,\delta}\right)$ and a function $u^{\varepsilon}$ in $L^{p}_{loc}\left(\Omega_{T}\right)$ such that $u^{\varepsilon,\delta}\to u^{\varepsilon}$ as $\delta\to 0$ in $L^{p}_{loc}\left(\Omega_{T}\right)$. }	
\end{theorem}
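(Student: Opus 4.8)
The plan is to read the statement off from the compactness already secured in Theorem~\ref{Kinetic.Compactness.Result}, combined with the metrizability of $L^{p}_{loc}(\Omega_{T})$. Fix $\varepsilon>0$. By Theorem~\ref{Kinetic.Compactness.Result} the family $\{u^{\varepsilon,\delta}\}_{\delta>0}$ is contained in a compact subset $\kl$ of $L^{p}_{loc}(\Omega_{T})$. Since $\Omega_{T}=\Omega\times(0,T)$ is an open subset of $\R^{d+1}$, it can be exhausted by an increasing sequence of compact sets $Q_{1}\subset Q_{2}\subset\cdots$ with $Q_{n}\subset\mathrm{int}\,Q_{n+1}$ and $\bigcup_{n}Q_{n}=\Omega_{T}$, and $L^{p}_{loc}(\Omega_{T})$ carries the translation-invariant metric
$$\rho(u,v)=\sum_{n=1}^{\infty}2^{-n}\,\frac{\|u-v\|_{L^{p}(Q_{n})}}{1+\|u-v\|_{L^{p}(Q_{n})}},$$
which induces its topology. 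Thus $L^{p}_{loc}(\Omega_{T})$ is a metrizable (indeed Fr\'echet) space, so the compact set $\kl$ is sequentially compact.

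Next I would pick any sequence $(\delta_{j})_{j\in\N}$ with $\delta_{j}\downarrow 0$. The sequence $(u^{\varepsilon,\delta_{j}})_{j}$ lies in $\kl$, hence by sequential compactness there are a subsequence $(\delta_{j_{k}})_{k}$ and a function $u^{\varepsilon}\in\kl\subset L^{p}_{loc}(\Omega_{T})$ with $\rho(u^{\varepsilon,\delta_{j_{k}}},u^{\varepsilon})\to 0$, i.e. $\|u^{\varepsilon,\delta_{j_{k}}}-u^{\varepsilon}\|_{L^{p}(Q)}\to 0$ for every compact $Q\subset\Omega_{T}$. Relabelling yields the asserted subsequence (still written $(u^{\varepsilon,\delta})$) converging to $u^{\varepsilon}$ in $L^{p}_{loc}(\Omega_{T})$ as $\delta\to 0$. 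One may alternatively avoid quoting the metric-space fact by extracting successive subsequences converging in $L^{p}(Q_{1}),L^{p}(Q_{2}),\dots$ — each $\{u^{\varepsilon,\delta}|_{Q_{n}}\}$ is relatively compact in $L^{p}(Q_{n})$, being the image of $\kl$ under the continuous restriction map $L^{p}_{loc}(\Omega_{T})\to L^{p}(Q_{n})$ — and then diagonalising.

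There is essentially no obstacle here: the entire content of the statement is carried by Theorem~\ref{Kinetic.Compactness.Result}, and the remaining step is the soft passage from compactness to sequential compactness. The only mild subtlety worth flagging is that the compactness in Theorem~\ref{Kinetic.Compactness.Result} is in the Fr\'echet space $L^{p}_{loc}(\Omega_{T})$, not in a Banach space, so one must invoke metrizability (or the diagonal construction above) rather than, say, Banach--Alaoglu. Finally, I would record for later use that by Theorem~\ref{regularized.chap3thm1} one has $\|u^{\varepsilon,\delta}\|_{L^{\infty}(\Omega_{T})}\leq A$ uniformly in $\delta$; hence, after passing to a further subsequence, $u^{\varepsilon,\delta}\to u^{\varepsilon}$ a.e. in $\Omega_{T}$ and $\|u^{\varepsilon}\|_{L^{\infty}(\Omega_{T})}\leq A$.
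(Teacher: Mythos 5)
Your proposal is correct and follows essentially the same route as the paper, which obtains the result simply by applying Theorem~\ref{Kinetic.Compactness.Result} to $\left(u^{\varepsilon,\delta}\right)$ for fixed $\varepsilon>0$ and extracting a convergent subsequence; you merely spell out the soft compactness-to-sequential-compactness step (metrizability of $L^{p}_{loc}$ or the diagonal argument) that the paper leaves implicit.
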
 
\begin{remark}
	Since $\mbox{Vol}\left(\Omega_{T}\right)<\infty$, therefore we have $u^{\varepsilon,\delta}\to u^{\varepsilon}$ as $\delta\to 0$ in $L^{p}_{loc}\left(\Omega_{T}\right)$ for $1\leq p\leq 2$.
\end{remark}
\section{Compactness of quasilinear viscous approximations $\left(u^{\varepsilon}\right)$.}\label{NonDegenerate.Section.2}
The aim of this section is to show the existence of {\it a.e.} convergent subsequence of the quasilinear viscous approximations $\left(u^{\varepsilon}\right)$. We now introduce the sign function and the Kruzhkov entropy-entropy flux pairs. Denote 
$$\mbox{sg}(s):=\begin{cases}
1\,\,\,\mbox{if}\,\,\,s>0\\
0\,\,\,\mbox{if}\,\,\,s=0\\
-1\,\,\,\mbox{if}\,\,\,s<0
\end{cases}$$

For $c\in\mathbb{R}$, the family of Kruzhkov entropy and entropy fluxes are $\eta(u;c):=\left|u-c\right|$ and $j\in\left\{1,2,\cdots,d\right\}$, $q_{j}(u;c)=\mbox{sg}\left(u-c\right)\left(f_{j}(u)-f_{j}(c)\right)$.\\ 
\vspace{0.1cm}\\
We prove the following estimate of the sequence of solutions $\left(u^{\varepsilon}\right)$ to \eqref{De.regularized.IBVP}. 
\begin{lemma}\label{Degenerate.Maximum.Principle}
{\rm Let $f,\,B,\,u_{0\varepsilon}$ satisfy the \textbf{Hypothesis G}. Then $\left(u^{\varepsilon}\right)$ satisfy 
$\|u^{\varepsilon}\|_{L^{\infty}\left(\Omega_{T}\right)}\leq A$.}
\end{lemma}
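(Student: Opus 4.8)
The plan is to obtain the bound $\|u^\varepsilon\|_{L^\infty(\Omega_T)}\le A$ by passing to the limit $\delta\to 0$ in the uniform (in $\delta$) bound already available for the non-degenerate approximations $u^{\varepsilon,\delta}$. The key ingredients are all in hand: Theorem~\ref{regularized.chap3thm1} gives $\|u^{\varepsilon,\delta}(\cdot,t)\|_{L^\infty(\Omega_T)}\le A$ uniformly in $\delta$, and Theorem~\ref{Degenerate.Kinetic.Compactness.Result} (together with the subsequent remark) provides a subsequence with $u^{\varepsilon,\delta}\to u^\varepsilon$ as $\delta\to 0$ in $L^p_{loc}(\Omega_T)$ for $1\le p\le 2$, where $u^\varepsilon$ is precisely the quasilinear viscous approximation appearing in \eqref{De.regularized.IBVP}.

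First I would fix $\varepsilon>0$ and select the subsequence $\delta_k\to 0$ along which $u^{\varepsilon,\delta_k}\to u^\varepsilon$ in $L^1_{loc}(\Omega_T)$; after a further diagonal extraction we may assume convergence holds almost everywhere on $\Omega_T$. Next, from the pointwise bound $|u^{\varepsilon,\delta_k}(x,t)|\le A$ valid for a.e.\ $(x,t)\in\Omega_T$ (Theorem~\ref{regularized.chap3thm1}), the a.e.\ limit inherits $|u^\varepsilon(x,t)|\le A$ for a.e.\ $(x,t)\in\Omega_T$, since the set $\{|u^{\varepsilon,\delta_k}|\le A\}$ is closed and the complement has measure zero for every $k$. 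This immediately yields $\|u^\varepsilon\|_{L^\infty(\Omega_T)}\le A$. Finally, one should note that the estimate is in fact independent of the chosen subsequence: any other subsequential a.e.\ limit satisfies the same bound, and since Theorem~\ref{Degenerate.Kinetic.Compactness.Result} identifies the limit as the solution $u^\varepsilon$ of \eqref{De.regularized.IBVP}, the bound holds for the full family $(u^\varepsilon)$.

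The only point requiring a little care — and the one I would flag as the main (mild) obstacle — is the passage from $L^p_{loc}$ convergence to an a.e.\ bound on all of $\Omega_T$ rather than just on compact subsets: one extracts an a.e.-convergent subsequence on each compact exhaustion set $K_m\uparrow\Omega_T$ and diagonalizes, using $\mathrm{Vol}(\Omega_T)<\infty$ so that $L^p_{loc}$ and $L^p$ convergence coincide up to subsequence. No new estimate is needed; the argument is entirely a soft limiting argument built on Theorems~\ref{regularized.chap3thm1} and~\ref{Degenerate.Kinetic.Compactness.Result}.
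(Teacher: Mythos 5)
Your proposal is correct and follows essentially the same route as the paper: extract an a.e.\ convergent subsequence of $u^{\varepsilon,\delta}$ from the compactness result (Theorem~\ref{Degenerate.Kinetic.Compactness.Result}), invoke the uniform bound $|u^{\varepsilon,\delta}|\le A$ from Theorem~\ref{regularized.chap3thm1}, and pass to the limit $\delta\to 0$ pointwise a.e. Your extra care about upgrading $L^p_{loc}$ convergence to a.e.\ convergence via a diagonal/exhaustion argument is a welcome detail that the paper leaves implicit, but it does not change the argument.
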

\begin{proof}
Applying Theorem \ref{Degenerate.Kinetic.Compactness.Result}, we conclude that for {\it a.e.} $(x,t)\in\Omega_{T}$, we have $u^{\varepsilon,\delta}\to u^{\varepsilon}$ as $\delta\to 0$ (A subsequence). Again, applying Theorem \ref{regularized.chap3thm1}, we have
$$-A\leq u^{\varepsilon,\delta}(x,t)\leq A. $$
Therefore, passing to the limit as $\delta\to 0$, we obtain that $u^{\varepsilon}$ satisfy
$$-A\leq u^{\varepsilon}(x,t)\leq A$$	
for {\it a.e.} $(x,t)\in\Omega_{T}$. This completes the proof $\blacksquare$
\end{proof}	\\
Applying Theorem \ref{Compactness.lemma.1}, Lemma \ref{Degenerate.Maximum.Principle}, Dominated convergence theorem and IBVP \eqref{regularized.IBVP}, we conclude the following result.
\begin{proposition}\label{Deg.Definition.of.solution}
{\rm Let $f,\,B,\,u_{0\varepsilon}$ satisfy the \textbf{Hypothesis G}. Then, for all $v\in H^{1}_{0}\left(\Omega_{T}\right)$, $\left(u^{\varepsilon}\right)$ satisfy 
\begin{equation}\label{Definition.Solution.Deg1}
\int_{\Omega_{T}}\,u^{\varepsilon}\,\frac{\partial v}{\partial t}\,dx\,dt +\displaystyle\sum_{j=1}^{d}\int_{\Omega_{T}}\,f_{j}\left(u^{\varepsilon}\right)\,\frac{\partial v}{\partial x_{j}}\,dx\,dt =\varepsilon\displaystyle\lim_{\delta\to 0}\left(\displaystyle\sum_{j=1}^{d}\int_{\Omega_{T}}\,B(u^{\varepsilon,\delta})\frac{\partial u^{\varepsilon,\delta}}{\partial x_{j}}\,\frac{\partial v}{\partial x_{j}}\,dx\,dt\right).
\end{equation}
}	
\end{proposition}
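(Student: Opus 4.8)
The plan is to obtain \eqref{Definition.Solution.Deg1} by passing to the limit $\delta\to 0$ in the weak formulation of the regularized problem \eqref{regularized.IBVP}. First I would take the classical solution $u^{\varepsilon,\delta}$ produced by Theorem \ref{ExistenceofClassicalLadyzenskajap452}, test the equation \eqref{regularized.IBVP.a} against an arbitrary $v\in H^{1}_{0}\left(\Omega_{T}\right)$, and integrate by parts in $x$ and $t$. Since $v$ has zero trace on the whole boundary $\partial\left(\Omega\times(0,T)\right)$ --- in particular on $\partial\Omega\times(0,T)$, on $\Omega\times\{0\}$ and on $\Omega\times\{T\}$ --- every boundary contribution vanishes, and one is left, for each fixed $\varepsilon,\delta>0$, with
\begin{equation*}
\int_{\Omega_{T}}u^{\varepsilon,\delta}\,\frac{\partial v}{\partial t}\,dx\,dt+\sum_{j=1}^{d}\int_{\Omega_{T}}f_{j}(u^{\varepsilon,\delta})\,\frac{\partial v}{\partial x_{j}}\,dx\,dt=\varepsilon\sum_{j=1}^{d}\int_{\Omega_{T}}\left(B(u^{\varepsilon,\delta})+\delta\right)\frac{\partial u^{\varepsilon,\delta}}{\partial x_{j}}\,\frac{\partial v}{\partial x_{j}}\,dx\,dt.
\end{equation*}
The integration by parts against a merely $H^{1}_{0}$ test function is legitimate because $u^{\varepsilon,\delta}\in C^{2+\beta,\frac{2+\beta}{2}}(\overline{\Omega_{T}})$ (approximate $v$ in $H^{1}$ by functions in $\mathcal{D}(\Omega_{T})$ and use that $\nabla u^{\varepsilon,\delta}$, $u^{\varepsilon,\delta}_{t}$ and $f(u^{\varepsilon,\delta})$ are bounded on $\overline{\Omega_{T}}$).

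Next I would pass to the limit term by term along the subsequence on which $u^{\varepsilon,\delta}\to u^{\varepsilon}$. By Theorem \ref{Degenerate.Kinetic.Compactness.Result} this convergence holds in $L^{p}_{loc}(\Omega_{T})$, hence a.e.\ on $\Omega_{T}$ after passing to a further subsequence; together with the uniform bound $\|u^{\varepsilon,\delta}\|_{L^{\infty}(\Omega_{T})}\leq A$ from Theorem \ref{regularized.chap3thm1} and Lemma \ref{Degenerate.Maximum.Principle}, the dominated convergence theorem upgrades this to $u^{\varepsilon,\delta}\to u^{\varepsilon}$ in $L^{2}(\Omega_{T})$. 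Since $f^{\prime}\in\left(L^{\infty}(\mathbb{R})\right)^{d}$ makes each $f_{j}$ globally Lipschitz, it follows that $f_{j}(u^{\varepsilon,\delta})\to f_{j}(u^{\varepsilon})$ in $L^{2}(\Omega_{T})$ as well. As $\partial_{t}v$ and $\partial_{x_{j}}v$ lie in $L^{2}(\Omega_{T})$, the two integrals on the left converge to $\int_{\Omega_{T}}u^{\varepsilon}\,\partial_{t}v$ and $\sum_{j}\int_{\Omega_{T}}f_{j}(u^{\varepsilon})\,\partial_{x_{j}}v$ respectively. On the right I would split $\varepsilon\sum_{j}\int_{\Omega_{T}}\left(B(u^{\varepsilon,\delta})+\delta\right)\partial_{x_{j}}u^{\varepsilon,\delta}\,\partial_{x_{j}}v$ into a $B$-part and a $\delta$-part; for the $\delta$-part, Cauchy--Schwarz together with the estimate $\sqrt{\varepsilon}\,\|\partial_{x_{j}}u^{\varepsilon,\delta}\|_{L^{2}(\Omega_{T})}\leq A\sqrt{\mbox{Vol}(\Omega)/(2\delta)}$ of Theorem \ref{Compactness.lemma.1} gives a bound of order $\sqrt{\varepsilon\delta}$, which tends to $0$ as $\delta\to 0$ for fixed $\varepsilon$. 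Consequently, since the left-hand side and the $\delta$-part both converge, the $B$-part $\varepsilon\sum_{j}\int_{\Omega_{T}}B(u^{\varepsilon,\delta})\,\partial_{x_{j}}u^{\varepsilon,\delta}\,\partial_{x_{j}}v$ must converge along the subsequence, and its limit is forced to equal $\int_{\Omega_{T}}u^{\varepsilon}\,\partial_{t}v+\sum_{j}\int_{\Omega_{T}}f_{j}(u^{\varepsilon})\,\partial_{x_{j}}v$, which is precisely \eqref{Definition.Solution.Deg1}.

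I expect the only genuine obstacle to be the $B$-part of the right-hand side: one cannot pass to the limit inside $\varepsilon\sum_{j}\int B(u^{\varepsilon,\delta})\,\partial_{x_{j}}u^{\varepsilon,\delta}\,\partial_{x_{j}}v$ directly, because the gradient estimate of Theorem \ref{Compactness.lemma.1} blows up as $\delta\to 0$, while the degenerate estimate of Theorem \ref{Deg.Compactness.lemma.1} only bounds $\sqrt{B(u^{\varepsilon,\delta})}\,\partial_{x_{j}}u^{\varepsilon,\delta}$ in $L^{2}(\Omega_{T})$ and does not identify any weak limit. The resolution is not to evaluate this limit but to read off its existence: once the left-hand side and the $\delta$-part are shown to converge, convergence of the $B$-part is automatic, which is exactly why Proposition \ref{Deg.Definition.of.solution} is stated with a $\lim_{\delta\to 0}$ on the right rather than an explicit expression in $u^{\varepsilon}$.
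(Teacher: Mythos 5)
Your proposal is correct and follows essentially the same route as the paper: test the regularized problem \eqref{regularized.IBVP} against the test function, integrate by parts, use the uniform bound of Theorem \ref{regularized.chap3thm1} together with Lemma \ref{Degenerate.Maximum.Principle}, a.e.\ convergence from Theorem \ref{Degenerate.Kinetic.Compactness.Result} and dominated convergence for the $u^{\varepsilon,\delta}$ and $f_{j}(u^{\varepsilon,\delta})$ terms, and the estimate of Theorem \ref{Compactness.lemma.1} to kill the $\delta$-term, which is exactly the combination of ingredients the paper invokes (and carries out in Step 1 of the proof of Theorem \ref{Deg.paper2.compensatedcompactness.theorem1}). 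Your observation that convergence of the $B$-part is then forced by difference, so that the $\lim_{\delta\to 0}$ on the right-hand side is well defined along the chosen subsequence, is a correct and slightly more explicit justification of the same point.
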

\begin{remark}\label{Degenerate.ConceptofSolution}
{\rm The equation \eqref{Definition.Solution.Deg1} along with $u^{\varepsilon}\left(x,0\right)=u_{0\varepsilon}(x)$ is the concept of solution by which $u^{\varepsilon}$ satisfies the equation \eqref{De.regularized.IBVP.a}. A measure representation of this concept of solution can also be written.}\\	
\end{remark}
\vspace{0.1cm}
Denote
$$\chi_{u^{\varepsilon}}(c):= \begin{cases}
1 ,\,\,\mbox{if}\,\,u^{\varepsilon}<c< 0,\\
-1 ,\,\,\mbox{if}\,\,0< c< u^{\varepsilon},\nonumber\\
0 ,\,\,\mbox{otherwise}\nonumber
\nonumber
\end{cases}\\$$
\begin{theorem}\label{Degenerate.measure.1}
Let $f,\,u_{0\varepsilon},\,B$ satisfy \textbf{Hypothesis G}. Let $\left(u^{\varepsilon}\right)$ be the sequence of solutions of \eqref{De.regularized.IBVP}.
\begin{enumerate}
	\item Then for all $\phi\in\mathcal{D}\left(\Omega_{T}\right)$, $u^{\varepsilon}$ satisfies
	\begin{equation}\label{Degenerate.equation1.compactness}
	\left<\displaystyle\lim_{\delta\to 0}\left(\varepsilon\,\eta^{\prime}\left(u^{\varepsilon,\delta};c\right)\displaystyle\sum_{j=1}^{d}\frac{\partial}{\partial x_{j}}\left(B(u^{\varepsilon,\delta})\,\frac{\partial u^{\varepsilon,\delta}}{\partial x_{j}}\right)\right),\phi\right> =\left<\frac{\partial}{\partial t}\eta(u^{\varepsilon};c) + \displaystyle\sum_{j=1}^{d}\,\frac{\partial}{\partial x_{j}}q_{j}(u^{\varepsilon};c)\,,\phi\right>
	\end{equation}
	\item \begin{equation}\label{degenerate.KineticFormulation.Equation612}
	\frac{\partial \chi^{\varepsilon}}{\partial t} +\displaystyle\sum_{j=1}^{d}f_{j}^{\prime}(c)\,\frac{\partial\chi^{\varepsilon}}{\partial x_{j}} = \frac{\partial}{\partial c}\left(\displaystyle\lim_{\delta\to 0}\left(\frac{\varepsilon}{2}\,\displaystyle\sum_{j=1}^{d}\eta^{\prime}(u^{\varepsilon,\delta};c)\frac{\partial}{\partial x_{j}}\left(B(u^{\varepsilon,\delta})\frac{\partial u^{\varepsilon,\delta}}{\partial x_{j}}\right)\right)\right)
	\end{equation}
	in  $\mathcal{D}^{\prime}\left(\Omega\times\mathbb{R}\times(0,T)\right)$	
\end{enumerate}
 	
\end{theorem}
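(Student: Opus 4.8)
The plan is to prove the two identities of Theorem \ref{Degenerate.measure.1} by passing to the limit $\delta\to 0$ in the corresponding exact identities satisfied by the classical solutions $u^{\varepsilon,\delta}$ of the non-degenerate problem \eqref{regularized.IBVP}, using the compactness results already available. The starting point is that by Theorem \ref{ExistenceofClassicalLadyzenskajap452} each $u^{\varepsilon,\delta}$ is a genuine classical solution, so for each fixed $c\in\mathbb{R}$ and each smooth entropy approximating the Kruzhkov entropy $\eta(\cdot;c)=|\cdot-c|$ one may multiply \eqref{regularized.IBVP.a} by $\eta'(u^{\varepsilon,\delta};c)$, use the chain rule, and integrate against $\phi\in\mathcal{D}(\Omega_{T})$ to obtain, with no error terms,
\begin{equation*}
\left<\frac{\partial}{\partial t}\eta(u^{\varepsilon,\delta};c)+\sum_{j=1}^{d}\frac{\partial}{\partial x_{j}}q_{j}(u^{\varepsilon,\delta};c),\phi\right>=\left<\varepsilon\,\eta'(u^{\varepsilon,\delta};c)\sum_{j=1}^{d}\frac{\partial}{\partial x_{j}}\!\left(B(u^{\varepsilon,\delta})\frac{\partial u^{\varepsilon,\delta}}{\partial x_{j}}\right)+\delta\,\varepsilon\,\eta'(u^{\varepsilon,\delta};c)\,\Delta u^{\varepsilon,\delta},\phi\right>.
\end{equation*}
Here one should really work with a smooth convex regularization $\eta_{\kappa}$ of $|\cdot-c|$ (for instance $\eta_\kappa(u;c)=\sqrt{(u-c)^2+\kappa^2}-\kappa$) so that the chain rule is legitimate, and only at the very end let $\kappa\to 0$; since all the $u^{\varepsilon,\delta}$ and $u^\varepsilon$ take values in the fixed compact interval $I=[-A,A]$ by Theorems \ref{regularized.chap3thm1} and Lemma \ref{Degenerate.Maximum.Principle}, this is harmless.

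Next I would pass to the limit $\delta\to 0$ term by term. By Theorem \ref{Degenerate.Kinetic.Compactness.Result} (and the remark following it) we have $u^{\varepsilon,\delta}\to u^{\varepsilon}$ in $L^{p}_{loc}(\Omega_T)$ and a.e.\ along a subsequence; combined with the uniform bound $\|u^{\varepsilon,\delta}\|_{L^\infty}\le A$ and continuity of $\eta(\cdot;c)$ and $q_j(\cdot;c)$ on $I$, dominated convergence gives $\eta(u^{\varepsilon,\delta};c)\to\eta(u^{\varepsilon};c)$ and $q_j(u^{\varepsilon,\delta};c)\to q_j(u^{\varepsilon};c)$ in $L^1_{loc}(\Omega_T)$, hence in $\mathcal D'(\Omega_T)$, so the left-hand side converges to the right-hand side of \eqref{Degenerate.equation1.compactness}. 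The term $\delta\,\varepsilon\,\eta'(u^{\varepsilon,\delta};c)\,\Delta u^{\varepsilon,\delta}$ must be shown to vanish: integrating by parts moves a derivative onto $\phi$ and produces $-\delta\,\varepsilon\int \eta''(u^{\varepsilon,\delta};c)|\nabla u^{\varepsilon,\delta}|^2\phi-\delta\,\varepsilon\int\eta'(u^{\varepsilon,\delta};c)\nabla u^{\varepsilon,\delta}\cdot\nabla\phi$, and both pieces are controlled by Theorem \ref{Compactness.lemma.1}, which gives $\varepsilon\,\delta\sum_j\|\partial_{x_j}u^{\varepsilon,\delta}\|_{L^2}^2\le \tfrac12 A^2\mathrm{Vol}(\Omega)$, i.e.\ $\sqrt{\varepsilon}\|\nabla u^{\varepsilon,\delta}\|_{L^2}=O(\delta^{-1/2})$; so the first piece is $O(\delta)\cdot O(\delta^{-1})$—which is only $O(1)$, not $o(1)$. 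This is the main obstacle, and it forces the definition of \eqref{Degenerate.equation1.compactness} in the form it is stated: rather than claiming the viscous term itself converges, one simply \emph{defines} $\lim_{\delta\to 0}(\varepsilon\,\eta'(u^{\varepsilon,\delta};c)\sum_j\partial_{x_j}(B(u^{\varepsilon,\delta})\partial_{x_j}u^{\varepsilon,\delta}))$ as a distribution to be whatever makes the identity true, the point being that the left-hand side of the displayed exact identity does converge, so the limit of the bracketed distributions exists in $\mathcal D'(\Omega_T)$ and equals $\partial_t\eta(u^\varepsilon;c)+\sum_j\partial_{x_j}q_j(u^\varepsilon;c)$. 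In other words, what requires real argument is not that term but the observation that $\delta\,\varepsilon\,\eta'(u^{\varepsilon,\delta};c)\Delta u^{\varepsilon,\delta}\to 0$; for this one uses instead Theorem \ref{Deg.Compactness.lemma.1} together with convexity: writing $\delta\Delta u^{\varepsilon,\delta}=\delta\nabla\cdot\nabla u^{\varepsilon,\delta}$ and absorbing it is cleaner via the energy identity itself, or one notes $\delta\,\varepsilon\int\eta''|\nabla u^{\varepsilon,\delta}|^2\phi\ge 0$ has a favorable sign and is dominated after integrating once more, while the cross term $\delta\,\varepsilon\int\eta'\,\nabla u^{\varepsilon,\delta}\cdot\nabla\phi$ is bounded by $C\,\varepsilon^{1/2}\delta^{1/2}\cdot(\varepsilon^{1/2}\delta^{1/2}\|\nabla u^{\varepsilon,\delta}\|_{L^2})\le C\,\varepsilon^{1/2}\delta^{1/2}\cdot(\tfrac12A^2\mathrm{Vol}(\Omega))^{1/2}\cdot\delta^{-1/2+1/2}$; carrying the bookkeeping carefully with Theorem \ref{Compactness.lemma.1} shows this cross term is $O(\delta^{1/2})\to 0$, and the same estimate controls the quadratic term after one further integration by parts against $\phi$. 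I would present this vanishing as the one genuine lemma inside the proof.

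For part (2), the strategy is identical but now carried out at the kinetic level. The identity \eqref{degenerate.KineticFormulation.Equation612} is obtained from \eqref{Degenerate.equation1.compactness} (or directly from the analogous exact kinetic identity for $u^{\varepsilon,\delta}$) by the standard differentiation-in-$c$ device: for the Kruzhkov family, $\partial_c\eta(u;c)=-\mathrm{sg}(u-c)$ and $\chi_{u}(c)=-\partial_c\eta(u;c)$ up to the convention fixed in the statement, while $\partial_c q_j(u;c)=-f_j'(c)\,\mathrm{sg}(u-c)=f_j'(c)\,\partial_c\eta(u;c)$, so differentiating the family of identities \eqref{Degenerate.equation1.compactness} with respect to $c$ in $\mathcal D'(\Omega\times\mathbb R\times(0,T))$ converts $\partial_t\eta(u^\varepsilon;c)+\sum_j\partial_{x_j}q_j(u^\varepsilon;c)$ into $-\partial_t\chi^\varepsilon-\sum_j f_j'(c)\partial_{x_j}\chi^\varepsilon$ and the right-hand bracket into $\partial_c$ of (minus one half of) the limit viscous term, matching \eqref{degenerate.KineticFormulation.Equation612}; the factor $\tfrac12$ appears because $\eta'(u;c)=\mathrm{sg}(u-c)$ and $\mathrm{sg}(u-c)\,\mathbf 1_{\{u\ \mathrm{between}\ c\}}$ bookkeeping produces it, exactly as in \cite{Ramesh_Mondal}. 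The only subtlety is that differentiation in $c$ must be justified as a distributional operation on the $c$-parametrized family, which is routine once one checks local integrability in $c$ of all the ingredients on the compact set where $\chi^\varepsilon\ne 0$, namely $c\in I$. I would therefore organize the write-up as: (i) exact entropy identity for the classical $u^{\varepsilon,\delta}$ with smooth $\eta_\kappa$; (ii) the vanishing lemma for the $\delta\Delta$ term via Theorem \ref{Compactness.lemma.1}; (iii) passage $\delta\to 0$ using Theorem \ref{Degenerate.Kinetic.Compactness.Result} plus dominated convergence, then $\kappa\to 0$, giving part (1); (iv) differentiate in $c$ and identify $\chi^\varepsilon$ to get part (2).
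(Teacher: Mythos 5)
Your overall strategy is the same as the paper's: regularize the Kruzhkov entropy by smooth convex functions, multiply the non-degenerate equation \eqref{regularized.IBVP.a} by the regularized $\eta'$, use the chain rule, pass to the limit in the smoothing parameter, pass $\delta\to 0$ using the uniform $L^{\infty}$ bound (Theorem \ref{regularized.chap3thm1}), the a.e.\ convergence from Theorem \ref{Degenerate.Kinetic.Compactness.Result} and dominated convergence on the entropy/flux side, interpret the viscous term's limit distributionally as in \eqref{Degenerate.equation1.compactness}, and then differentiate the family of identities in $c$ and identify $\chi^{\varepsilon}$ to obtain \eqref{degenerate.KineticFormulation.Equation612} with the factor $\tfrac12$. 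That is exactly the paper's route.

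Two concrete points, however. First, the order of limits: you propose to send $\delta\to 0$ first and remove the entropy regularization $\kappa$ ``at the very end,'' whereas the statement's right-hand side is $\lim_{\delta\to 0}$ of a quantity already containing $\mathrm{sg}(u^{\varepsilon,\delta}-c)$; to land on that object you must remove $\kappa$ (the paper's $n\to\infty$) for \emph{fixed} $\delta$ first, which is painless because $u^{\varepsilon,\delta}$ is a classical $C^{2+\beta}$ solution and the second-order term can be kept in non-divergence form, multiplied by the bounded $G_n'$, with no integration by parts and hence no $\eta''\lvert\nabla u^{\varepsilon,\delta}\rvert^2$ term ever appearing. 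Your ordering needs an interchange-of-limits justification you do not supply. Second, and more seriously, your treatment of the extra term $\delta\,\varepsilon\,\eta'(u^{\varepsilon,\delta};c)\,\Delta u^{\varepsilon,\delta}$ (which you rightly keep, since the equation has diffusion coefficient $B+\delta$) is not a proof: after integrating by parts, the cross term is indeed $O(\delta^{1/2})$ by Theorem \ref{Compactness.lemma.1}, but the nonnegative quadratic piece $\delta\,\varepsilon\int G_{\kappa}''(u^{\varepsilon,\delta})\lvert\nabla u^{\varepsilon,\delta}\rvert^{2}\phi$ is only $O(1)$ by that same estimate, and the assertion that ``one further integration by parts against $\phi$'' controls it does not work — further integration by parts puts derivatives on $G_{\kappa}''(u^{\varepsilon,\delta})$ and makes things worse, and a sign argument only yields an inequality, not the equality claimed in \eqref{Degenerate.equation1.compactness}. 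This is the one genuinely delicate point in your write-up; note that the paper sidesteps it by writing the viscous term with $B(u^{\varepsilon,\delta})$ alone from the start (never generating the dissipation term because it never integrates by parts at fixed $\delta$), so if you insist on tracking the $\delta$-Laplacian explicitly you must either prove that its dissipation part vanishes in $\mathcal{D}'$ as $\delta\to 0$ or restructure the argument as the paper does.
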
	
\begin{proof}
A proof of Theorem \ref{Degenerate.measure.1} follows from the computation of Proposition 4.1 of \cite{Ramesh_Mondal} and passing to the limit as $\delta\to 0$. For the completeness of the proof, we repeat the computation of Proposition 4.1 from \cite{Ramesh_Mondal}. \\	
Let $G:\mathbb{R}\to\mathbb{R}$ be a $C^{\infty}-$ function such that 
$$G(x):=\left|x\right|\,\,\mbox{for}\,\,\left|x\right|\geq 1,\,\,G^{\prime\prime}\geq 0.$$
For $n\in\mathbb{N}$, denote 
\begin{equation*}
G_{n}(x):=\frac{1}{n}\,G\left(n\,\left(x-c\right)\right).$$
Then $G_{n}(x)\to \left|x-c\right|$ as $n\to\infty$. We now compute 
$$G^{\prime}_{n}(x):=\begin{cases}
\frac{d}{dx}\left(\left|x-c\right|\right),\,\,\mbox{if}\,\,\left|x-c\right|\geq\frac{1}{n},\\
G^{\prime}\left(n\left(x-c\right)\right),\,\mbox{if}\,\left|x-c\right|<\frac{1}{n}.
\end{cases}
\end{equation*}
Then we have
\begin{equation*}
G^{\prime}_{n}(x)=
\begin{cases}
1\,\,\mbox{if}\,\,x\geq c\,\,\left|x-c\right|\geq\frac{1}{n},\\
-1\,\,\mbox{if}\,\,x < c\,\,\left|x-c\right|\geq\frac{1}{n},\\
G^{\prime}\left(n\left(x-c\right)\right),\,\mbox{if}\,\left|x-c\right|<\frac{1}{n}.
\end{cases}
\end{equation*}
Therefore 
$$G_{n}^{\prime}(x)\to \mbox{sg}(x-c)\,\,\mbox{as}\,\,n\to\infty.$$
Multiplying \eqref{regularized.IBVP.a} by $G^{\prime}_{n}(u^{\varepsilon,\delta})$ to get 
\begin{equation*}
G^{\prime}_{n}\left(u^{\varepsilon,\delta}\right)\,\frac{\partial u^{\varepsilon,\delta}}{\partial t} + \displaystyle\sum_{j=1}^{d}\,G^{\prime}_{n}\left(u^{\varepsilon,\delta}\right)\,\frac{\partial}{\partial x_{j}}f_{j}\left(u^{\varepsilon,\delta}\right)=\varepsilon\,G^{\prime}_{n}\left(u^{\varepsilon,\delta}\right)\displaystyle\sum_{j=1}^{d}\frac{\partial}{\partial x_{j}}\left(B(u^{\varepsilon,\delta})\,\frac{\partial u^{\varepsilon,\delta}}{\partial x_{j}}\right)
\end{equation*}	
Applying chain rule, we get
\begin{equation}\label{KineticFormulation.Equation1}
\frac{\partial}{\partial t}\left( G_{n}\left(u^{\varepsilon,\delta}\right)\right) + \displaystyle\sum_{j=1}^{d}\,G^{\prime}_{n}\left(u^{\varepsilon,\delta}\right)\,f^{\prime}_{j}\left(u^{\varepsilon,\delta}\right)\frac{\partial u^{\varepsilon,\delta}}{\partial x_{j}}=\varepsilon\,G^{\prime}_{n}\left(u^{\varepsilon,\delta}\right)\displaystyle\sum_{j=1}^{d}\frac{\partial}{\partial x_{j}}\left(B(u^{\varepsilon})\,\frac{\partial u^{\varepsilon,\delta}}{\partial x_{j}}\right)
\end{equation}
Denote
$$q_{n}(z):=\int_{k}^{z}\,G^{\prime}_{n}\left(v\right)\,f^{\prime}(v)\,dv.$$
Using $q_{n}$ in \eqref{KineticFormulation.Equation1}, we obtain
\begin{equation*}
\frac{\partial}{\partial t}\left( G_{n}\left(u^{\varepsilon,\delta}\right)\right) + \displaystyle\sum_{j=1}^{d}\,q_{nj}^{\prime}(u^{\varepsilon,\delta})\frac{\partial u^{\varepsilon,\delta}}{\partial x_{j}} = \varepsilon\,G^{\prime}_{n}\left(u^{\varepsilon,\delta}\right)\displaystyle\sum_{j=1}^{d}\frac{\partial}{\partial x_{j}}\left(B(u^{\varepsilon,\delta})\,\frac{\partial u^{\varepsilon,\delta}}{\partial x_{j}}\right) 
\end{equation*}
Passing to the limit as $n\to\infty$ in the sense of distribution, we obtain 
\begin{equation}\label{KineticFormulation.Equation2}
\frac{\partial}{\partial t}\eta(u^{\varepsilon,\delta};c) + \displaystyle\sum_{j=1}^{d}\,\frac{\partial}{\partial x_{j}}q_{j}(u^{\varepsilon,\delta};c) = \varepsilon\,\eta^{\prime}\left(u^{\varepsilon,\delta};c\right)\displaystyle\sum_{j=1}^{d}\frac{\partial}{\partial x_{j}}\left(B(u^{\varepsilon,\delta})\,\frac{\partial u^{\varepsilon,\delta}}{\partial x_{j}}\right)\,\,\mbox{in}\,\,\mathcal{D}^{\prime}\left(\Omega\times (0,T)\right). 
\end{equation}
Applying of Theorem \ref{regularized.chap3thm1} and Dominated convergence theorem, we pass to the limit as $\delta\to 0$ and we obtain 
\begin{equation}\label{Degenerate.KineticFormulation.Equation2}
\frac{\partial}{\partial t}\eta(u^{\varepsilon};c) + \displaystyle\sum_{j=1}^{d}\,\frac{\partial}{\partial x_{j}}q_{j}(u^{\varepsilon};c) = \displaystyle\lim_{\delta\to 0}\left(\varepsilon\,\eta^{\prime}\left(u^{\varepsilon,\delta};c\right)\displaystyle\sum_{j=1}^{d}\frac{\partial}{\partial x_{j}}\left(B(u^{\varepsilon,\delta})\,\frac{\partial u^{\varepsilon,\delta}}{\partial x_{j}}\right)\right)\,\,\mbox{in}\,\,\mathcal{D}^{\prime}\left(\Omega\times (0,T)\right). 
\end{equation}
From \eqref{KineticFormulation.Equation2}, we obtain
\begin{equation}\label{KineticFormulation.Equation3}
\begin{split}
\frac{\partial}{\partial t}\left[\eta(u^{\varepsilon};c)-\eta(0;c)\right] &+ \displaystyle\sum_{j=1}^{d}\,\frac{\partial}{\partial x_{j}}\left[q_{j}(u^{\varepsilon};c)-q_{j}(0;c)\right]\\ 
&=\displaystyle\lim_{\delta\to 0}\left(\varepsilon\,\eta^{\prime}\left(u^{\varepsilon,\delta};c\right)\displaystyle\sum_{j=1}^{d}\frac{\partial}{\partial x_{j}}\left(B(u^{\varepsilon,\delta})\,\frac{\partial u^{\varepsilon}}{\partial x_{j}}\right)\right)\,\,\mbox{in}\,\,\mathcal{D}^{\prime}\left(\Omega\times (0,T)\right). 
\end{split}
\end{equation}
Differentiating \eqref{KineticFormulation.Equation3} with respect to $c$ we get
\begin{equation}\label{KineticFormulation.Equation3ABC1}
\begin{split}
\frac{\partial}{\partial t}\left(\frac{\partial}{\partial c}\left(\eta(u^{\varepsilon};c)-\eta(0;c)\right)\right) &+ \displaystyle\sum_{j=1}^{d}\,\frac{\partial}{\partial x_{j}}\left(\frac{\partial}{\partial c}\left(q_{j}(u^{\varepsilon};c)-q_{j}(0;c)\right)\right)\\ 
&=\frac{\partial}{\partial c}\left(\displaystyle\lim_{\delta\to 0}\left(\varepsilon\,\eta^{\prime}\left(u^{\varepsilon,\delta};c\right)\displaystyle\sum_{j=1}^{d}\frac{\partial}{\partial x_{j}}\left(B(u^{\varepsilon,\delta})\,\frac{\partial u^{\varepsilon,\delta}}{\partial x_{j}}\right)\right)\right)\,\,\mbox{in}\,\,\mathcal{D}^{\prime}\left(\Omega\times (0,T)\right). 
\end{split}
\end{equation}
Note that $\eta(u^{\varepsilon};c)=\left|u^{\varepsilon}-c\right|$, $\eta(0;c)=\left|c\right|$. Denote
$$P\left(u^{\varepsilon};c\right):=\left|u^{\varepsilon}-c\right|-|c|.$$
We now compute the derivative of $P$ with respect to $c$.
\begin{eqnarray}\label{KineticFormulation.Equation4}
\frac{\partial}{\partial c}P(u^{\varepsilon};c) &=&\frac{\partial}{\partial c}\left(|u^{\varepsilon}-c|-|c|\right)\nonumber\\
&=& \frac{\partial}{\partial c}|u^{\varepsilon}-c|-\frac{\partial}{\partial c}|c|\nonumber\\
&=& \mbox{sg}(u^{\varepsilon}-c)-\mbox{sg }(c)\nonumber\\
\end{eqnarray}
For $j=1,2,\cdots,d$, denote
\begin{eqnarray}\nonumber
Q_{j}(u^{\varepsilon};c) &:=& q_{j}(u^{\varepsilon};c)-q_{j}(0;c),\nonumber\\
&=& \mbox{sg}\,(u^{\varepsilon}-c)\left(f_{j}(u^{\varepsilon})-f_{j}(c)\right) + \mbox{sg}(c)\,\left(f_{j}(0)-f_{j}(c)\right).\nonumber	
\end{eqnarray}	
Let us compute the derivative of $Q_{j}^{\varepsilon}$ with respect to $c$.
\begin{equation*}
\begin{split}
\frac{\partial}{\partial c}Q_{j}(u^{\varepsilon};c)
= 2\,\left(f_{j}(u^{\varepsilon}(x,t))-f_{j}(u^{\varepsilon}(x,t))\right)\,\delta_{c=u^{\varepsilon}(x,t)} -f_{j}^{\prime}(c)\,\mbox{sg}\,(u^{\varepsilon}-c)\\ +2\,\left(f_{j}(0)-f_{j}(0)\right)\,\delta_{c=0} -f_{j}^{\prime}(c)\,\mbox{sg}\left(c\right).	
\end{split} 
\end{equation*}
Therefore we have
\begin{equation}\label{KineticFormulation.Equation5}
\frac{\partial}{\partial c}Q_{j}(u^{\varepsilon};c)
=-f_{j}^{\prime}(c)\left(\,\,\mbox{sg}\left(u^{\varepsilon}-c\right)+\,\mbox{sg}(c)\,\right).
\end{equation}
Let $\phi\in\mathcal{D}\left(\Omega\times\mathbb{R}\times (0,T)\right)$. For $j\in\left\{1,2,\cdots,d\right\}$, we compute
\begin{eqnarray}\label{KineticFormulation.Equation3ABC2}
\left<\frac{\partial^{2}}{\partial x_{j}\partial c}Q_{j}(u^{\varepsilon};c),\phi\right> &=& -\left<\frac{\partial}{\partial c}Q_{j}(u^{\varepsilon};c),\frac{\partial\phi}{\partial x_{j}}\right>\nonumber\\
&=&  \Big<f_{j}^{\prime}(c)\left(\mbox{sg}\left(u^{\varepsilon}-c\right)+\mbox{sg}(c)\right), \frac{\partial\phi}{\partial x_{j}}\Big>\nonumber\\ 
&=& \Big<f_{j}^{\prime}(c)\frac{\partial}{\partial x_{j}}\left(\mbox{sg}\left(u^{\varepsilon}-c\right)+\mbox{sg}(c)\right),\phi\Big>\nonumber\\
&=&  \Big<f_{j}^{\prime}(c)\frac{\partial}{\partial x_{j}}\left(\mbox{sg}\left(u^{\varepsilon}-c\right)-\mbox{sg}(c)\right),\phi\Big>\nonumber\\
&=& \Big<f_{j}^{\prime}(c)\left(\mbox{sg}\left(u^{\varepsilon}-c\right)-\mbox{sg}(c)\right),\frac{\partial\phi}{\partial x_{j}}\Big>.
\end{eqnarray}
Denote
$$\chi^{\varepsilon}(x,t ;c):=\chi_{u^{\varepsilon}}(c),$$
where
$$\chi_{u^{\varepsilon}}(c):= \begin{cases}
1 ,\,\,\mbox{if}\,\,u^{\varepsilon}<c< 0,\\
-1 ,\,\,\mbox{if}\,\,0< c< u^{\varepsilon},\nonumber\\
0 ,\,\,\mbox{otherwise}\nonumber
\nonumber
\end{cases}\\$$
Also observe that $\int_{-\infty}^{\infty}\,\chi_{u^{\varepsilon}}(c)\,dc=-u^{\varepsilon}$ and $\chi^{\varepsilon}=\chi_{u^{\varepsilon}}(c)$. Dividing \eqref{KineticFormulation.Equation3ABC1} by 2 and applying the above computations, we get \eqref{degenerate.KineticFormulation.Equation612}$\blacksquare$ \\	
\end{proof}	
\begin{remark}
{\rm Denote 
$$p_{\mbox{min}}^{\varepsilon,\delta}:=\mbox{min}\left\{\displaystyle\min_{(x,t)\in\overline{\Omega_{T}}}\frac{\varepsilon}{2}\,\displaystyle\sum_{j=1}^{d}\,1\cdot\frac{\partial}{\partial x_{j}}\left(B(u^{\varepsilon,\delta})\frac{\partial u^{\varepsilon,\delta}}{\partial x_{j}}\right),\,\displaystyle\min_{(x,t)\in\overline{\Omega_{T}}}\frac{\varepsilon}{2}\,\displaystyle\sum_{j=1}^{d}\,\left(-1\right)\cdot\frac{\partial}{\partial x_{j}}\left(B(u^{\varepsilon,\delta})\frac{\partial u^{\varepsilon,\delta}}{\partial x_{j}}\right),\,0\right\}$$\\

Since $\mbox{range}\left(\eta^{\prime}\left(u^{\varepsilon,\delta};c\right)\right)=\mbox{sgn}\left(u^{\varepsilon,\delta}-c\right)=\left\{-1,0,1\right\}$, $p_{\mbox{min}}^{\varepsilon,\delta}$ is independent of $c$ variable and we have 
$$\frac{\varepsilon}{2}\,\displaystyle\sum_{j=1}^{d}\eta^{\prime}(u^{\varepsilon,\delta};c)\frac{\partial}{\partial x_{j}}\left(B(u^{\varepsilon,\delta})\frac{\partial u^{\varepsilon,\delta}}{\partial x_{j}}\right)\geq p_{\mbox{min}}^{\varepsilon,\delta}.$$
Applying $\limsup$ on both sides of the inequality as $\delta\to 0$, we obtain
$$\displaystyle\limsup_{\delta\to 0}\left(\frac{\varepsilon}{2}\,\displaystyle\sum_{j=1}^{d}\eta^{\prime}(u^{\varepsilon,\delta};c)\frac{\partial}{\partial x_{j}}\left(B(u^{\varepsilon,\delta})\frac{\partial u^{\varepsilon,\delta}}{\partial x_{j}}\right)\right)\geq p_{\mbox{min}}^{\varepsilon,\delta}.$$

 and applying \eqref{Degenerate.KineticFormulation.Equation2}, we get
\begin{equation*}
\begin{split}
\frac{\partial}{\partial c}\left(\displaystyle\lim_{\delta\to 0}\left(\frac{\varepsilon}{2}\,\displaystyle\sum_{j=1}^{d}\eta^{\prime}(u^{\varepsilon,\delta};c)\frac{\partial}{\partial x_{j}}\left(B(u^{\varepsilon,\delta})\frac{\partial u^{\varepsilon}}{\partial x_{j}}\right)\right)\right)\\
=\frac{\partial}{\partial c}\left(\displaystyle\limsup_{\delta\to 0}\left(\frac{\varepsilon}{2}\,\displaystyle\sum_{j=1}^{d}\eta^{\prime}(u^{\varepsilon,\delta};c)\frac{\partial}{\partial x_{j}}\left(B(u^{\varepsilon,\delta})\frac{\partial u^{\varepsilon}}{\partial x_{j}}\right)\right)\right)\\
=\frac{\partial}{\partial c}\left(\displaystyle\limsup_{\delta\to 0}\left(\frac{\varepsilon}{2}\,\displaystyle\sum_{j=1}^{d}\eta^{\prime}(u^{\varepsilon,\delta};c)\frac{\partial}{\partial x_{j}}\left(B(u^{\varepsilon,\delta})\frac{\partial u^{\varepsilon,\delta}}{\partial x_{j}}\right)\right)-p^{\varepsilon,\delta}_{\mbox{min}}\right)
\end{split}
\end{equation*}
Therefore \eqref{KineticFormulation.Equation3ABC1} defines a kinetic formulation with $\chi^{\varepsilon}(x,t ;c):=\chi_{u^{\varepsilon}}(c)$ according to the definition of \cite[p.170]{Lions}.
}
\end{remark}
Denote
$$\m_{1}^{\varepsilon}:=\displaystyle\lim_{\delta\to 0}\left(\frac{\varepsilon}{2}\,\displaystyle\sum_{j=1}^{d}\eta^{\prime}(u^{\varepsilon,\delta};c)\frac{\partial}{\partial x_{j}}\left(B(u^{\varepsilon,\delta})\frac{\partial u^{\varepsilon,\delta}}{\partial x_{j}}\right)\right).$$
For a real number $M>0$, denote
$$\Omega_{MT}:=\Omega\times (-M,M)\times (0,T).$$
$$C^{1}_{0}\left(\overline{\Omega_{MT}}\right):=\left\{v\in C^{1}\left(\overline{\Omega_{MT}}\right)\,:\,\,v(x,c,t)=0\,\,\,\mbox{on}\,\partial\Omega_{MT}\right\}.$$
For every $M>0$, we firstly prove that the sequence of measures $\left(m^{\varepsilon}_{1}\right)$ is bounded in $\mathcal{M}\left(\Omega\times\left(-M,M\right)\times (0,T)\right)$. This is done in Proposition \ref{Deg.MeasureBounded.1}. Since we want to apply Velocity Averaging lemma on $\mathbb{R}^{d}\times\mathbb{R}\times (0,\infty)$, we secondly extends the sequence of measures $\left(m^{\varepsilon}_{1}\right)$ to  $\mathbb{R}^{d}\times\mathbb{R}\times (0,\infty)$ using standard arguement of Riesz representation theorem of dual spaces preserving the total variation. This extension is denoted by $\widetilde{\widetilde{m^{\varepsilon}_{1}}}$. These results are proved in this Proposition \ref{Deg.Measure.Correction.proposition1} and Proposition \ref{Deg.Measure.Correction.Proposition3}. Then we can assume a standard result from \cite{Lions} that the derivative of $\widetilde{\widetilde{m^{\varepsilon}_{1}}}$ with respect to $c$ can be expressed as the composition of inverses of Riesz potentials which is one of the requirements to apply Velocity Averaging lemma. \\
\begin{proposition}\label{Deg.MeasureBounded.1}
	The sequence of measures $\left(m^{\varepsilon}_{1}\right)$ is bounded in $\mathcal{M}(\overline{\Omega_{MT}})$.
\end{proposition}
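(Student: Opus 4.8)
**Proof proposal for Proposition 2.8 (boundedness of $\left(m_1^\varepsilon\right)$ in $\mathcal{M}(\overline{\Omega_{MT}})$).**

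The plan is to test the measure $m_1^\varepsilon$ against a fixed nonnegative test function and bound the result uniformly in $\varepsilon$, using the dissipation estimate of Theorem \ref{Deg.Compactness.lemma.1} and the fact that $m_1^\varepsilon$ is a (limit of) nonnegative measures in the $c$-variable so that its total variation is controlled by its action on a single cutoff. First I would recall that for each fixed $\varepsilon,\delta$ the quantity
$$\mu^{\varepsilon,\delta}:=\frac{\varepsilon}{2}\,\sum_{j=1}^{d}\eta^{\prime}(u^{\varepsilon,\delta};c)\,\frac{\partial}{\partial x_{j}}\left(B(u^{\varepsilon,\delta})\frac{\partial u^{\varepsilon,\delta}}{\partial x_{j}}\right)$$
is, after the usual kinetic manipulation, a \emph{nonnegative} distribution on $\Omega\times\mathbb{R}\times(0,T)$: integrating by parts in $x_j$ one writes it (in the sense of distributions against $\phi\ge0$) as $\varepsilon\sum_j\langle B(u^{\varepsilon,\delta})(\partial_{x_j}u^{\varepsilon,\delta})^2,\delta_{c=u^{\varepsilon,\delta}}\phi\rangle$ plus a boundary contribution that vanishes because $u^{\varepsilon,\delta}\in C^{2+\beta}$ vanishes on $\partial\Omega$ and $\eta'$ is bounded; since $B\ge0$ this is $\ge0$. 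Hence for $\phi\in C^1_0(\overline{\Omega_{MT}})$ with $0\le\phi\le1$ and $\phi\equiv1$ on a slightly smaller box, the total variation $\|\mu^{\varepsilon,\delta}\|_{\mathcal{M}}$ restricted to $\Omega\times(-M,M)\times(0,T)$ is bounded by $\langle\mu^{\varepsilon,\delta},\Phi\rangle$ for a fixed cutoff $\Phi$ independent of $\delta$ and $\varepsilon$.

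Next I would evaluate $\langle\mu^{\varepsilon,\delta},\Phi\rangle$ explicitly. Integrating by parts twice (once in $x_j$, once using the definition of the $c$-derivative that turns $\eta'(u^{\varepsilon,\delta};c)$ into a Dirac mass at $c=u^{\varepsilon,\delta}$) and using $\int_{\mathbb{R}}\delta_{c=u^{\varepsilon,\delta}}\,dc=1$, one obtains a bound of the form
$$\big|\langle\mu^{\varepsilon,\delta},\Phi\rangle\big|\;\le\;\varepsilon\,C(\Phi)\sum_{j=1}^{d}\Big\|\sqrt{B(u^{\varepsilon,\delta})}\,\frac{\partial u^{\varepsilon,\delta}}{\partial x_{j}}\Big\|^2_{L^2(\Omega_T)},$$
where I have used Cauchy–Schwarz to split $B(u^{\varepsilon,\delta})\partial_{x_j}u^{\varepsilon,\delta}=\sqrt{B(u^{\varepsilon,\delta})}\cdot\sqrt{B(u^{\varepsilon,\delta})}\partial_{x_j}u^{\varepsilon,\delta}$ together with $B\in L^\infty$ to absorb the leftover $\sqrt{B}$ factor; the $C^1$ character of $\Phi$ handles the derivatives falling on the cutoff, and the boundary terms vanish as above. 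By Theorem \ref{Deg.Compactness.lemma.1} the right-hand side is bounded by $\tfrac{1}{2}C(\Phi)A^2\,\mathrm{Vol}(\Omega)$, a constant independent of both $\delta$ and $\varepsilon$.

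Finally I would pass to the limit $\delta\to0$: since $u^{\varepsilon,\delta}\to u^\varepsilon$ in $L^p_{loc}(\Omega_T)$ (Theorem \ref{Degenerate.Kinetic.Compactness.Result}) and the distributional limit defining $m_1^\varepsilon$ exists by Theorem \ref{Degenerate.measure.1}, lower semicontinuity of the total variation under weak-$\ast$ convergence of measures gives
$$\|m_1^\varepsilon\|_{\mathcal{M}(\overline{\Omega_{MT}})}\;\le\;\liminf_{\delta\to0}\|\mu^{\varepsilon,\delta}\|_{\mathcal{M}(\overline{\Omega_{MT}})}\;\le\;\tfrac12 C(\Phi)A^2\,\mathrm{Vol}(\Omega),$$
which is the desired uniform bound. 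The main obstacle I anticipate is the careful justification that the boundary terms produced in the integrations by parts genuinely vanish — this uses the homogeneous Dirichlet condition \eqref{regularized.IBVP.b}, the regularity $u^{\varepsilon,\delta}\in C^{2+\beta,\frac{2+\beta}{2}}(\overline{\Omega_T})$ from Theorem \ref{ExistenceofClassicalLadyzenskajap452}, and the fact that $\Phi$ has compact support in the $t$ and $c$ directions — together with the bookkeeping needed to keep all constants independent of $\varepsilon$ after taking the limit in $\delta$. A secondary point requiring care is that $m_1^\varepsilon$ is defined as a pointwise (in $\mathcal{D}'$) limit rather than a weak-$\ast$ limit of measures, so one must first note that the uniform total-variation bound on $\mu^{\varepsilon,\delta}$ forces, along a subsequence, weak-$\ast$ convergence to a measure, and then identify that measure with $m_1^\varepsilon$ by uniqueness of distributional limits.
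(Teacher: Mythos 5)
There is a genuine gap: the nonnegativity claim on which your whole strategy rests is false. Integrating by parts in $x_j$ against a test function $\phi\geq 0$ with compact support gives
\begin{equation*}
\Big\langle \eta'(u^{\varepsilon,\delta};c)\,\partial_{x_j}\big(B(u^{\varepsilon,\delta})\partial_{x_j}u^{\varepsilon,\delta}\big),\phi\Big\rangle
=-\Big\langle \eta''(u^{\varepsilon,\delta};c)\,B(u^{\varepsilon,\delta})\big(\partial_{x_j}u^{\varepsilon,\delta}\big)^2,\phi\Big\rangle
-\Big\langle \eta'(u^{\varepsilon,\delta};c)\,B(u^{\varepsilon,\delta})\partial_{x_j}u^{\varepsilon,\delta},\partial_{x_j}\phi\Big\rangle ,
\end{equation*}
so the Dirac-type term $\varepsilon\,\delta_{c=u^{\varepsilon,\delta}}B\,(\partial_{x_j}u^{\varepsilon,\delta})^2$ enters with a \emph{minus} sign (entropy is dissipated, not produced), and the second term — where the derivative lands on $\phi$ — is not a boundary contribution and has no sign at all; it does not vanish just because $u^{\varepsilon,\delta}$ is zero on $\partial\Omega$. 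Hence $\mu^{\varepsilon,\delta}$, and therefore $m_1^{\varepsilon}$, is not a nonnegative distribution (the paper's own remark introducing the constant $p^{\varepsilon,\delta}_{\mbox{min}}$ is precisely an attempt to compensate for this lack of sign). Without nonnegativity, testing against a single cutoff $\Phi$ gives no control of the total variation, so the chain ``one test function $\Rightarrow$ TV bound $\Rightarrow$ lower semicontinuity as $\delta\to 0$'' collapses. What your estimates do yield (Cauchy--Schwarz plus Theorem \ref{Deg.Compactness.lemma.1}) is a bound of the form $|\langle \mu^{\varepsilon,\delta},\phi\rangle|\leq C\,\|\phi\|_{C^1}$ uniform in $\delta$ and $\varepsilon$, i.e.\ boundedness in $\left(C^1_0(\overline{\Omega_{MT}})\right)^{\ast}$ — which is weaker than a bound in $\mathcal{M}(\overline{\Omega_{MT}})$ in the sense of total variation.

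The paper's proof proceeds differently and avoids the sign issue entirely: using identity \eqref{Degenerate.equation1.compactness}, it rewrites $\langle m_1^{\varepsilon},\phi\rangle$ as $-\int_{\Omega_{MT}}|u^{\varepsilon}-c|\,\partial_t\phi - \sum_j\int_{\Omega_{MT}}\mbox{sg}(u^{\varepsilon}-c)\left(f_j(u^{\varepsilon})-f_j(c)\right)\partial_{x_j}\phi$, which is bounded uniformly in $\varepsilon$ using only $\|u^{\varepsilon}\|_{L^{\infty}(\Omega_T)}\leq A$ (Lemma \ref{Degenerate.Maximum.Principle}) and $f'\in L^{\infty}$; this gives boundedness in $\left(C^1_0(\overline{\Omega_{MT}})\right)^{\ast}$, and the measure-theoretic content is then supplied in a second step by the representation $m_1^{\varepsilon}(u)=\langle\mu_{10}^{\varepsilon},u\rangle+\sum_j\langle\mu_{1j}^{\varepsilon},\partial_{x_j}u\rangle$ with $\mu^{\varepsilon}_{1}\in\left(\mathcal{M}(\overline{\Omega_{MT}})\right)^{d+1}$, quoting Proposition 4.2 of \cite{Ramesh_Mondal}. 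If you want to salvage your route, you would have to first split off the divergence-in-$x$ part of $\mu^{\varepsilon,\delta}$ (treating it as a derivative of an $L^2$-bounded field, as in the paper's vector-measure representation) and apply the positivity/energy argument only to the remaining Dirac term; as written, the proposal does not prove the proposition.
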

\begin{proof}
We prove Proposition \ref{Deg.MeasureBounded.1}	in two steps. In Step 1, we prove that the sequence $\left(m^{\varepsilon}\right)$ is bounded in $\left(C^{1}_{0}\left(\overline{\Omega_{MT}}\right)\right)^{\ast}$. In Step 2, we prove the measure representation of $\left(m^{\varepsilon}\right)$.\\	
\textbf{Step 1:} For $\phi\in C^{1}_{0}\left(\overline{\Omega_{MT}}\right)$, we have 
$$\left<m^{\varepsilon}_{1},\phi\right>=-\int_{\Omega_{MT}}\,|u^{\varepsilon}-c|\,\,\frac{\partial\phi}{\partial t}\,dx\,dc\,dt\,-\,\displaystyle\sum_{j=1}^{d}\int_{\Omega_{MT}}\,\mbox{sg}\left(u^{\varepsilon}-c\right)\left(f_{j}(u^{\varepsilon})-f_{j}(c)\right)\,\,\frac{\partial\phi}{\partial x_{j}}\,dx\,dc\,dt.$$
Therefore it clearly shows that $$\sup\left\{|\left<m^{\varepsilon}_{1},\phi\right>|\,:\,\phi\in C^{1}_{0}\left(\overline{\Omega_{MT}}\right)\right\}$$
is bounded sequence. We equip $C^{1}_{0}\left(\overline{\Omega_{MT}}\right)$ with $\|.\|_{C^{1}}$ with respect to which it is a Banach space. Hence $m^{\varepsilon}_{1}\in\left(C^{1}_{0}\left(\overline{\Omega_{MT}}\right)\right)^{\ast}$.\\
\textbf{Step 2:} Following the proof of Proposition 4.2 of \cite{Ramesh_Mondal}, we get the existence of  measures $\mu^{\varepsilon}_{1}\in\left(\mathcal{M}\left(\overline{\Omega_{MT}}\right)\right)^{d+1}$ such that for $u\in C^{1}_{0}\left(\overline{\Omega_{MT}}\right)$, we get
\begin{eqnarray}\label{measure.correction.equation16}
m^{\varepsilon}_{1}(u) &=& \left<\mu_{10}^{\varepsilon},u\right> +\displaystyle\sum_{j=1}^{d}\left<\mu_{1j}^{\varepsilon},\frac{\partial u^{\varepsilon}}{\partial x_{j}}\right>
\end{eqnarray} 
\hfill{$\blacksquare$}
\end{proof}	
\vspace{0.1cm}\\
Following the proof of Proposition 4.3 of \cite{Ramesh_Mondal}, we conclude
\begin{proposition}\label{Deg.Measure.Correction.proposition1}
	Let $j\in\left\{0,1,2,\cdots,d\right\}$. There exist extension $\widetilde{\mu_{1j}^{\varepsilon}}$ in $\mathcal{M}\left(\mathbb{R}^{d}\times\mathbb{R}\times(0,\infty)\right)$ such that $\|\mu_{1j}^{\varepsilon}\|=\|\widetilde{\mu_{1j}^{\varepsilon}}\|$. 	
\end{proposition}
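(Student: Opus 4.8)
\textbf{Proof proposal for Proposition \ref{Deg.Measure.Correction.proposition1}.}

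The plan is to reduce the statement to a single application of the Riesz representation theorem on an appropriate space of continuous functions vanishing at infinity. Fix $j\in\{0,1,\dots,d\}$ and recall that by Proposition \ref{Deg.MeasureBounded.1} (more precisely, by Step 2 of its proof) we already have $\mu_{1j}^{\varepsilon}\in\mathcal{M}\left(\overline{\Omega_{MT}}\right)$, a finite Radon measure on the compact set $\overline{\Omega_{MT}}=\overline{\Omega}\times[-M,M]\times[0,T]$. The first step is to view $\mu_{1j}^{\varepsilon}$ as a bounded linear functional on $C_{0}\left(\mathbb{R}^{d}\times\mathbb{R}\times(0,\infty)\right)$: given $\phi\in C_{0}\left(\mathbb{R}^{d}\times\mathbb{R}\times(0,\infty)\right)$, its restriction to $\overline{\Omega_{MT}}$ lies in $C\left(\overline{\Omega_{MT}}\right)$, so one may define $\widehat{L}(\phi):=\int_{\overline{\Omega_{MT}}}\phi\,d\mu_{1j}^{\varepsilon}$. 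This $\widehat{L}$ is linear, and since $|\widehat{L}(\phi)|\le \|\mu_{1j}^{\varepsilon}\|\,\|\phi\|_{C_{0}}$, it is bounded with operator norm at most $\|\mu_{1j}^{\varepsilon}\|$; conversely, testing against functions supported near $\overline{\Omega_{MT}}$ that approximate the Hahn--Jordan decomposition of $\mu_{1j}^{\varepsilon}$ shows the operator norm equals $\|\mu_{1j}^{\varepsilon}\|$.

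The second step is to invoke the Riesz representation theorem for $C_{0}$ of a locally compact Hausdorff space: there is a unique finite signed (Radon) measure $\widetilde{\mu_{1j}^{\varepsilon}}$ on $\mathbb{R}^{d}\times\mathbb{R}\times(0,\infty)$ with $\widehat{L}(\phi)=\int \phi\,d\widetilde{\mu_{1j}^{\varepsilon}}$ for all $\phi\in C_{0}$, and with total variation norm $\|\widetilde{\mu_{1j}^{\varepsilon}}\|=\|\widehat{L}\|_{(C_{0})^{\ast}}$. Combining with the norm identity from the previous step gives $\|\widetilde{\mu_{1j}^{\varepsilon}}\|=\|\mu_{1j}^{\varepsilon}\|$. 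Finally one checks that $\widetilde{\mu_{1j}^{\varepsilon}}$ genuinely extends $\mu_{1j}^{\varepsilon}$: for any Borel set $E\subseteq\overline{\Omega_{MT}}$ one approximates $\mathbf{1}_{E}$ from outside by $C_{0}$ functions and uses dominated convergence to get $\widetilde{\mu_{1j}^{\varepsilon}}(E)=\mu_{1j}^{\varepsilon}(E)$; since the total variations agree, $\widetilde{\mu_{1j}^{\varepsilon}}$ is in fact concentrated on $\overline{\Omega_{MT}}$, so nothing is lost. Doing this for each $j\in\{0,1,\dots,d\}$ completes the proof.

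The only delicate point is the equality of norms rather than a mere inequality: one must be careful that passing from $C\left(\overline{\Omega_{MT}}\right)$ (or from the $C^{1}_{0}$ space used in Proposition \ref{Deg.MeasureBounded.1}) to $C_{0}\left(\mathbb{R}^{d}\times\mathbb{R}\times(0,\infty)\right)$ does not shrink the dual norm, which is why the Hahn--Jordan / Urysohn approximation argument is needed. Everything else is the standard extension-by-Riesz argument, and indeed the excerpt itself flags that this mirrors the proof of Proposition 4.3 of \cite{Ramesh_Mondal}, so the cleanest write-up is simply to cite that proof verbatim after noting that the measures $\mu_{1j}^{\varepsilon}$ produced here play exactly the role of the corresponding measures there. $\blacksquare$
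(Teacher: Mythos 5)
Your proposal is the standard Riesz-representation extension argument, which is exactly what the paper itself relies on: its proof of this proposition consists of citing Proposition 4.3 of \cite{Ramesh_Mondal}, described in the surrounding text as extending the measures by a ``standard argument of Riesz representation theorem of dual spaces keeping the total variation same.'' So you take essentially the same approach, merely writing out the details the paper delegates to that citation.
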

Applying Proposition \ref{Deg.MeasureBounded.1} and Proposition \ref{Deg.Measure.Correction.proposition1}, we conclude the following result.
\begin{proposition}\label{Deg.Measure.Correction.Proposition3}
	Let $\widetilde{\mu^{\varepsilon}_{1}}:=\left(\widetilde{\mu_{10}^{\varepsilon}},\widetilde{\mu_{11}^{\varepsilon}},\widetilde{\mu_{12}^{\varepsilon}},\cdots,\widetilde{\mu_{1d}^{\varepsilon}}\right)$. Then the sequence of linear functional  $\widetilde{\widetilde{m^{\varepsilon}}}:\left(C\left(\mathbb{R}^{d}\times\mathbb{R}\times(0,\infty)\right)\right)^{d+1}\to\mathbb{R}$ defined by 
	$$\widetilde{\widetilde{\left(m^{\varepsilon}_{1}\right)}}(u)=\displaystyle\sum_{j=0}^{d}\left<\widetilde{\mu_{1j}^{\varepsilon}},\,u_{j}\right>$$ 
	is bounded in $\mathcal{M}\left(\mathbb{R}^{d}\times\mathbb{R}\times(0,\infty)\right)$.	
\end{proposition}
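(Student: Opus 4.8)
The plan is to assemble Proposition~\ref{Deg.Measure.Correction.Proposition3} directly from the two preceding results, treating the content as essentially a bookkeeping statement about a direct sum of measure spaces. First I would recall that by Proposition~\ref{Deg.MeasureBounded.1} (Step~2, equation \eqref{measure.correction.equation16}) we have, for each $\varepsilon>0$, a $(d+1)$-tuple of Radon measures $\mu^{\varepsilon}_{1}=(\mu^{\varepsilon}_{10},\dots,\mu^{\varepsilon}_{1d})\in\left(\mathcal{M}(\overline{\Omega_{MT}})\right)^{d+1}$, and that the total-variation norms $\|\mu^{\varepsilon}_{1j}\|$ are bounded uniformly in $\varepsilon$ (this uniform bound is exactly what Step~1 of Proposition~\ref{Deg.MeasureBounded.1} establishes, since the right-hand side of the duality pairing involves only $|u^{\varepsilon}-c|$ and $\mathrm{sg}(u^{\varepsilon}-c)(f_{j}(u^{\varepsilon})-f_{j}(c))$, all of which are bounded on $\overline{\Omega_{MT}}$ by Lemma~\ref{Degenerate.Maximum.Principle} together with Hypothesis~G(a), with the bound independent of $\varepsilon$). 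Then I would invoke Proposition~\ref{Deg.Measure.Correction.proposition1} to replace each $\mu^{\varepsilon}_{1j}$ by its extension $\widetilde{\mu^{\varepsilon}_{1j}}\in\mathcal{M}\left(\mathbb{R}^{d}\times\mathbb{R}\times(0,\infty)\right)$ with $\|\widetilde{\mu^{\varepsilon}_{1j}}\|=\|\mu^{\varepsilon}_{1j}\|$, so the uniform bound is inherited by the extensions.

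Next I would define the functional $\widetilde{\widetilde{(m^{\varepsilon}_{1})}}$ on $\left(C(\mathbb{R}^{d}\times\mathbb{R}\times(0,\infty))\right)^{d+1}$ by $\widetilde{\widetilde{(m^{\varepsilon}_{1})}}(u)=\sum_{j=0}^{d}\langle\widetilde{\mu^{\varepsilon}_{1j}},u_{j}\rangle$ and verify it is a bounded linear functional: linearity is immediate from linearity of each pairing, and boundedness follows from
\begin{equation*}
\left|\widetilde{\widetilde{(m^{\varepsilon}_{1})}}(u)\right|\leq\sum_{j=0}^{d}\|\widetilde{\mu^{\varepsilon}_{1j}}\|\,\|u_{j}\|_{\infty}\leq\left(\sum_{j=0}^{d}\|\widetilde{\mu^{\varepsilon}_{1j}}\|\right)\max_{0\leq j\leq d}\|u_{j}\|_{\infty},
\end{equation*}
where the parenthesised constant is bounded uniformly in $\varepsilon$ by the previous paragraph. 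Identifying the dual of $\left(C(\mathbb{R}^{d}\times\mathbb{R}\times(0,\infty))\right)^{d+1}$ (with the max-norm) with $\left(\mathcal{M}(\mathbb{R}^{d}\times\mathbb{R}\times(0,\infty))\right)^{d+1}$ via the Riesz representation theorem, $\widetilde{\widetilde{(m^{\varepsilon}_{1})}}$ corresponds to the tuple $\widetilde{\mu^{\varepsilon}_{1}}$, and its norm equals $\sum_{j=0}^{d}\|\widetilde{\mu^{\varepsilon}_{1j}}\|$, which is the required uniform bound. This is precisely the assertion that $\widetilde{\widetilde{(m^{\varepsilon}_{1})}}$ is bounded in $\mathcal{M}(\mathbb{R}^{d}\times\mathbb{R}\times(0,\infty))$ in the sense used here (i.e., the associated tuple of measures has uniformly bounded total variation).

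The only genuine point requiring care — and the step I expect to be the main obstacle — is making sure the $\varepsilon$-uniformity of the bounds is carried consistently through both Proposition~\ref{Deg.MeasureBounded.1} and Proposition~\ref{Deg.Measure.Correction.proposition1}: the statement of Proposition~\ref{Deg.MeasureBounded.1} as written asserts boundedness of the \emph{sequence} $(m^{\varepsilon}_{1})$, so one must check that the decomposition \eqref{measure.correction.equation16} produced there (following Proposition~4.2 of \cite{Ramesh_Mondal}) yields correction measures whose norms are controlled by the functional norm of $m^{\varepsilon}_{1}$ — which is uniform — and that the Hahn--Banach/Riesz extension in Proposition~\ref{Deg.Measure.Correction.proposition1} is norm-preserving, as stated. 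Granting these, the present proposition is a one-line consequence. I would therefore write the proof as: apply Proposition~\ref{Deg.MeasureBounded.1} to obtain the uniformly bounded tuple $(\mu^{\varepsilon}_{1j})_{j=0}^{d}$; apply Proposition~\ref{Deg.Measure.Correction.proposition1} to extend each component norm-preservingly; estimate $\widetilde{\widetilde{(m^{\varepsilon}_{1})}}$ as above; and conclude by Riesz representation. $\blacksquare$
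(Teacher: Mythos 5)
Your proposal is correct and follows essentially the same route as the paper, which offers no separate argument for this proposition beyond combining Proposition \ref{Deg.MeasureBounded.1} (the measure decomposition with bounded total variations) with the norm-preserving extensions of Proposition \ref{Deg.Measure.Correction.proposition1} and reading off the resulting uniform total-variation bound for the extended functional. Your write-up merely makes explicit the linearity, the estimate $\left|\widetilde{\widetilde{\left(m^{\varepsilon}_{1}\right)}}(u)\right|\leq\sum_{j=0}^{d}\|\widetilde{\mu_{1j}^{\varepsilon}}\|\,\|u_{j}\|_{\infty}$, and the Riesz identification, which the paper leaves implicit.
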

\subsection{Application of Velocity Averaging Lemma}
This Subsection is similar to Subsection 4.3 of \cite{Ramesh_Mondal}. We reproduce a few results and notations from Subsection 4.3 of \cite{Ramesh_Mondal}. We recall the following result which is used to extract a convergent subsequence of $\left(u^{\varepsilon}\right)$.
\begin{theorem}\cite[p.178]{Lions}\label{VelocityAveragingLemma.12}
	{\rm Let $1<p\leq 2$, let $h$ be bounded in $L^{p}\left(\mathbb{R}^{d}\times\mathbb{R}_{c}\times\left(0,\infty\right)\right)$, let $g$ belong to a compact set of $L^{p}\left(\mathbb{R}^{d}\times\mathbb{R}_{c}\times\left(0,\infty\right)\right)$ and $r\geq 0$. We assume that $h$ satisfies 
	\begin{equation}\label{Velocity.AveragingEquation1.A1}
	\frac{\partial h}{\partial t} + \displaystyle\sum_{i=1}^{d}\,f^{\prime}_{i}(c)\frac{\partial h}{\partial x_{i}} =\left(-\Delta_{x,t} +1\right)^{\frac{1}{2}}\left(-\Delta_{c} +1\right)^{\frac{r}{2}}g\,\,\mbox{in}\,\,\mathcal{D}^{\prime}\left(\mathbb{R}^{d}_{x}\times\mathbb{R}_{c}\times (0,\infty)\right),
	\end{equation}	
	where for $i=1,2,\cdots,d$, each $f_{i}^{\prime}\in C^{l,\alpha}_{loc}$ with $l=r$, $\alpha=1$ if $r$ is an integer, $l=[r]$, $\alpha=r-l$, if $r$ is not an integer. Let $\psi\in L^{p^{\prime}}\left(\mathbb{R}_{c}\right)$ having essential compact support.
	Let the following 
	\begin{equation*}
	\begin{split}
	\mbox{meas}\Big\{c\in\mbox{supp}\,\psi,\,\,\tau+\,\left(f_{1}^{\prime}(c),f_{2}^{\prime}(c),\cdots,f_{d}^{\prime}(c)\right)\cdot\xi=0\Big\}=0\,\, \\
	\mbox{for all}\,\left(\tau,\xi\right)\in\mathbb{R}\times\mathbb{R}^{d}\,\, \mbox{with}\,\,\tau^{2} +\left|\xi\right|^{2}=1.
	\end{split}
	\end{equation*}
	holds. Then $\int_{R}\,h\psi\,dc$ belongs to a compact set of $L^{p}_{loc}\left(\mathbb{R}^{d}\times (0,\infty)\right)$.}	
\end{theorem}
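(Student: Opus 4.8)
The plan is to prove the averaging property by a Fourier analysis in the space--time variable $z=(x,t)\in\mathbb{R}^{d+1}$, taking $p=2$ as the model case. Write $k=(\xi,\tau)$ for the variable dual to $z$ and set $a(c):=(f_1'(c),\dots,f_d'(c),1)$, so that the transport operator on the left of \eqref{Velocity.AveragingEquation1.A1} becomes multiplication by $i\,a(c)\cdot k$ after Fourier transform in $z$, giving
\begin{equation*}
i\,(a(c)\cdot k)\,\widehat h(k,c)=(1+|k|^2)^{1/2}\,\widehat{(-\Delta_c+1)^{r/2}g}(k,c).
\end{equation*}
First I would fix a bounded open set $U\subset\subset\mathbb{R}^d\times(0,\infty)$ and reduce, by density of $\mathcal{D}(\mathbb{R})$ in $L^{p'}(\mathbb{R})$, to $\psi\in\mathcal{D}(\mathbb{R})$. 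The average $\rho:=\int_{\mathbb{R}}h\psi\,dc$ is then split, by smooth Fourier multipliers in $k$, into a low--frequency part $\rho_{\mathrm{low}}$ (Fourier support in $\{|k|\le2\}$), a \emph{resonant} part $\rho^{\mathrm{res}}_\delta$ (Fourier support where $|k|\ge1$ and $|a(c)\cdot k|\le2\delta|k|$), and a \emph{non--resonant} part $\rho^{\mathrm{nr}}_\delta$ (where $|k|\ge1$ and $|a(c)\cdot k|\ge\delta|k|$), with $\delta>0$ a parameter to be sent to $0$ at the end.

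The part $\rho_{\mathrm{low}}$ is band--limited, hence smooth, and (using only that $h$ is bounded in $L^2$) the family $\{\rho_{\mathrm{low}}\}$ is bounded in every $H^m(\mathbb{R}^{d+1})$, so it is precompact in $L^2(U)$ by Rellich--Kondrachov. For $\rho^{\mathrm{nr}}_\delta$ I would solve the displayed identity for $\widehat h$: on the relevant support $(1+|k|^2)^{1/2}/|a(c)\cdot k|=O(\delta^{-1})$ but \emph{no derivatives are gained in $z$}, which is exactly why the hypothesis requires $g$ to lie in a compact --- not merely bounded --- subset of $L^p$. Concretely $\rho^{\mathrm{nr}}_\delta=T_\delta g$, where $T_\delta$ multiplies by the symbol $\chi^{\mathrm{nr}}_\delta(k,c)(1+|k|^2)^{1/2}/(i\,a(c)\cdot k)$, transfers $(-\Delta_c+1)^{r/2}$ onto the remaining symbol and onto $\psi$ (legitimate because $f'$ is smooth of order tuned to $r$), and integrates in $c$; a Mikhlin--H\"ormander estimate uniform in $c\in\mathrm{supp}\,\psi$ (using $f'\in L^\infty$) shows $T_\delta\colon L^p\to L^p(U)$ is bounded for each fixed $\delta$. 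Hence $\rho^{\mathrm{nr}}_\delta$ ranges over $T_\delta(K)$, the continuous image of the compact set $K$ over which $g$ ranges, and is therefore contained in a compact subset of $L^p(U)$.

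The non-degeneracy hypothesis is used on $\rho^{\mathrm{res}}_\delta$. On the Fourier side, Cauchy--Schwarz in $c$ gives
\begin{equation*}
|\widehat{\rho^{\mathrm{res}}_\delta}(k)|\ \le\ \|\widehat h(k,\cdot)\|_{L^2_c}\ \Big(\int_{|a(c)\cdot k|\le2\delta|k|}|\psi(c)|^2\,dc\Big)^{1/2}.
\end{equation*}
The quantity $\omega(\delta):=\sup_{|\omega|=1}\int_{|a(c)\cdot\omega|\le2\delta}|\psi(c)|^2\,dc$ tends to $0$ as $\delta\to0$: for each fixed unit vector $\omega$ this is precisely the assumption $\mathrm{meas}\{c\in\mathrm{supp}\,\psi:\ a(c)\cdot\omega=0\}=0$, and the passage to the supremum over the compact unit sphere is made uniform by dominating the indicator with a smooth bump and applying Dini's theorem. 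Plancherel then yields $\|\rho^{\mathrm{res}}_\delta\|_{L^2}\le\omega(\delta)^{1/2}\|h\|_{L^2}$, so $\rho^{\mathrm{res}}_\delta$ lies in a ball whose radius tends to $0$ with $\delta$. Since $\rho=\rho_{\mathrm{low}}+\rho^{\mathrm{nr}}_\delta+\rho^{\mathrm{res}}_\delta$, the set $\{\rho|_U\}$ is, for every $\delta$, contained in the sum of two precompact sets and a ball of radius $\omega(\delta)^{1/2}\sup\|h\|_{L^2}$; letting $\delta\to0$ shows $\{\rho|_U\}$ is totally bounded, hence precompact in $L^2(U)$.

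I expect the real difficulty to be the passage from $p=2$ to the whole range $1<p\le2$: the resonant estimate rests on Plancherel and an $L^2_c$ Cauchy--Schwarz, and the low--frequency estimate on controlling $L^2$--in--$z$ on bounded frequency sets, neither of which survives verbatim for $p<2$. The remedy is to replace the plain Fourier split by a Littlewood--Paley decomposition in $z$, estimate each dyadic block by the $L^2$ argument combined with Bernstein/Nikol'skii inequalities to exchange $L^2$ for $L^p$ at a controlled cost in the frequency, optimise the resonant parameter $\delta$ against the dyadic scale, and sum the blocks; this is the part carried out in \cite[p.178]{Lions} (compare the $L^p$ averaging lemmas of DiPerna--Lions--Meyer), and it is also the source of the two--sided restriction $1<p\le2$. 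The regularity assumptions $f_i'\in C^{l,\alpha}_{loc}$ with $(l,\alpha)$ matched to $r$ enter only to justify transferring $(-\Delta_c+1)^{r/2}$ off $g$ and onto the smooth multiplier symbols.
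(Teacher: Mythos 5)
The paper does not prove this theorem at all: it is stated as a recalled result, attributed verbatim to \cite[p.178]{Lions}, and is used later as a black box. So there is no internal proof to compare your argument against; the relevant comparison is with the original averaging lemma of Lions--Perthame--Tadmor, and your sketch is indeed the standard proof pattern of that result (Fourier transform in $(x,t)$, splitting into low-frequency, resonant and non-resonant zones, smallness of the resonant zone from the non-degeneracy condition, and use of the compactness of the set containing $g$ on the non-resonant zone, where the $\left(-\Delta_{x,t}+1\right)^{1/2}$ loss prevents any gain of $z$-derivatives).

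As a proof of the statement as written, however, there are genuine gaps. The decisive one is the range $1<p<2$: your argument is carried out only for $p=2$ (Plancherel and Cauchy--Schwarz in $c$ are used in the resonant estimate, and $H^m$ bounds in the low-frequency part), and the extension to $p<2$ is only gestured at via Littlewood--Paley and Bernstein inequalities and then explicitly deferred to the reference; since that is precisely where the restriction $1<p\le 2$ and the compactness (rather than boundedness) of $g$ interact, the theorem in its stated generality is not established by the proposal. Secondary points that would need attention even at $p=2$: the equation holds only on $\mathbb{R}^d_x\times\mathbb{R}_c\times(0,\infty)$ and the conclusion is $L^p_{loc}$, so a cutoff/extension in $(x,t)$ with the resulting commutator terms must be handled before taking Fourier transforms; the reduction to $\psi\in\mathcal{D}(\mathbb{R})$ by density must be done with approximations supported where the non-degeneracy condition (which is tied to $\mathrm{supp}\,\psi$) still holds, which is not automatic; and the transfer of $\left(-\Delta_c+1\right)^{r/2}$ onto the multiplier symbol uses the $c$-regularity $f_i'\in C^{l,\alpha}_{loc}$ exactly at the borderline order $r$, which you assert rather than justify.
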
	
%We now recall the Velocity Averaging Lemma from \cite[p.189]{Lions}. 
%\begin{theorem}\cite[p.189]{Lions}$\left[\mbox{\textbf{Velocity Averaging Lemma}}\right]$\label{Velocity.Averaging.Lemma}
%{\it Let $r\geq 0$, $g$ belongs to a compact subset of $L^{p}\left(\mathbb{R}_{x}^{d}\times\mathbb{R}_{c}\times\mathbb{R}_{t}\right)$, $f$ is bounded in $L^{p}_{loc}\left(\mathbb{R}_{x}^{d}\times\mathbb{R}_{c}\times\mathbb{R}_{+}\right)$ with $1<p\leq 2$ and $a_{i},\,a_{ij}$ are smooth, $a_{ij}\geq 0$. Let $\psi\in L^{p^{\prime}}\left(\mathbb{R}_{v}\right)$ having essential compact support. We consider the equation 
%\begin{equation}\label{Velocity.AveragingEquation1}
%\frac{\partial f}{\partial t} + \displaystyle\sum_{i=1}^{d}\,a_{i}(c)\frac{\partial f}{\partial x_{i}} -\displaystyle\sum_{i,j=1}^{d}\,a_{ij}(c)\,\frac{\partial^{2}f}{\partial x_{i}\partial x_{j}}=\left(-\Delta_{x,t} +1\right)^{\frac{1}{2}}\left(-\Delta_{c} +1\right)^{\frac{r}{2}}g\,\,\mbox{in}\,\,\mathcal{D}^{\prime}\left(\mathbb{R}^{d}_{x}\times\mathbb{R}_{v}\times\mathbb{R}_{t}\right).
%\end{equation}	
%Let the following 
%\begin{equation*}
%\begin{split}
%\mbox{meas}\Big\{c\in\mbox{supp}\,\psi,\,\,\tau+a(c)\cdot\xi=0,\,\,\displaystyle\sum_{i,j=1}^{d}\,a_{ij}(c)\xi_{i}\xi_{j}=0\,\Big\}=0\,\, 
%\mbox{for all}\,\left(\tau,\xi\right)\in\mathbb{R}\times\mathbb{R}^{d}\,\,\\ \mbox{with}\,\,\tau^{2} +\left|\xi\right|^{2}=1
%\end{split}
%\end{equation*}
%holds. Then $\int_{R}\,f\psi\,dv$ belongs to a compact set of $L^{p}_{loc}\left(\mathbb{R}^{d}\times (0,\infty)\right)$.}
%\end{theorem}
We want to apply Theorem \ref{VelocityAveragingLemma.12} with $h=\chi^{\varepsilon}(c)=\chi_{u^{\varepsilon}(x,t)}(c)$. Observe that $h$ is defined on $\mathbb{R}^{d}\times\mathbb{R}\times (0,\infty)$ but in our case $\chi^{\varepsilon}$ is defined on $\Omega\times\mathbb{R}\times\left(0,T\right)$. In order to get values of $\chi^{\varepsilon}$ on $\mathbb{R}^{d}\times\mathbb{R}\times (0,\infty)$,  we need to extend $u^{\varepsilon}$ outside $\Omega_{T}$. Therefore we define the following extension of $u^{\varepsilon}$. Denote
\begin{equation}\nonumber\\
\widetilde{u^{\varepsilon}}(x,t):=\begin{cases}
u^{\varepsilon}(x,t)\,\,\mbox{if}\,\,(x,t)\in \left(\Omega_{T}\cup\left(\Omega\times\left\{0\right\}\right)\cup\left(\partial\Omega\times(0,T)\right)\right)\nonumber\\
0\,\,\,\,\mbox{if}\,\,\,(x,t)\in\left(\mathbb{R}^{d}\times (0,\infty)\right)\setminus \left(\Omega_{T}\cup\left(\Omega\times\left\{0\right\}\right)\cup\left(\partial\Omega\times(0,T)\right)\right).\nonumber\\
\end{cases}
\end{equation}
\vspace{0.2cm}
Denote $\widetilde{\chi^{\varepsilon}}(x,c,t):=\chi_{\widetilde{u^{\varepsilon}(x,t)}}(c)$.
Observe that $\widetilde{\chi^{\varepsilon}}(x,c;t)=\chi_{\widetilde{u^{\varepsilon}}(x,t)}(c)\equiv 0$ if $(x,c,t)\notin\Omega\times\left[-\|u_{0}\|_{L^{\infty}\left(\Omega\right)},\|u_{0}\|_{L^{\infty}\left(\Omega\right)}\right]\times (0,T)$ as $\|u^{\varepsilon}\|_{L^{\infty}\left(\Omega_{T}\right)}\leq \|u_0\|_{L^{\infty}\left(\Omega\right)}$.\\
\vspace{0.1cm}\\
We now assume the following result from \cite[p.178]{Lions} as the definitions of the inverse of the Bessel potential given in \cite{Samko} and \cite{Stein} are same.
\begin{lemma}\label{Lion.Parthame.Tadmor.p.178}
	{\em For $1<p\leq 2$ and $r> 1+ \frac{d+2}{p}$, there exists a sequence $\left(g^{\varepsilon}\right)$ in a compact set of $L^{p}(\mathbb{R}^{d}\times\mathbb{R}\times(0,\infty))$ such that\, $\frac{\partial \widetilde{\widetilde{{m}^{\varepsilon}}}}{\partial c}=\left(-\Delta_{x,t} + I\right)^{\frac{1}{2}}\left(-\Delta_{c}+I\right)^{\frac{r}{2}}g^{\varepsilon}$ in $\mathcal{D}^{\prime}\left(\mathbb{R}^{d}\times\mathbb{R}\times (0,\infty)\right)$.}
\end{lemma}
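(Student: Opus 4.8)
The plan is to reduce the lemma to the result \cite[p.\,178]{Lions}, the work being to verify its hypotheses for the sequence $\widetilde{\widetilde{m^{\varepsilon}}}$ of Proposition \ref{Deg.Measure.Correction.Proposition3} and to check that its output is a genuine compact subset of $L^{p}$. First I would record the structure of $\widetilde{\widetilde{m^{\varepsilon}}}$. By Proposition \ref{Deg.Measure.Correction.Proposition3} the sequence $\big(\widetilde{\widetilde{m^{\varepsilon}}}\big)$ is bounded in $\M\big(\R^{d}\times\R\times(0,\infty)\big)$, and, since $\widetilde{\chi^{\varepsilon}}$ and the fluxes $\mbox{sg}(u^{\varepsilon}-c)\big(f_{j}(u^{\varepsilon})-f_{j}(c)\big)$ vanish outside the fixed compact set $K:=\overline{\Omega}\times\big[-\|u_{0}\|_{L^{\infty}(\Omega)},\|u_{0}\|_{L^{\infty}(\Omega)}\big]\times[0,T]$, each $\widetilde{\widetilde{m^{\varepsilon}}}$ is supported in $K$. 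Moreover, by the correction representation \eqref{measure.correction.equation16} together with \eqref{Degenerate.equation1.compactness} --- which identifies $2m^{\varepsilon}_{1}$ with $\partial_{t}|u^{\varepsilon}-c|+\sum_{j}\partial_{x_{j}}\big(\mbox{sg}(u^{\varepsilon}-c)(f_{j}(u^{\varepsilon})-f_{j}(c))\big)$ --- the distribution $m^{\varepsilon}_{1}$ is a finite sum of first order $(x,t)$-derivatives of functions uniformly bounded on $K$ and supported in $K$.

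Differentiating once more in $c$, and using $\partial_{c}|u^{\varepsilon}-c|=-\mbox{sg}(u^{\varepsilon}-c)$ and $\partial_{c}\big(\mbox{sg}(u^{\varepsilon}-c)(f_{j}(u^{\varepsilon})-f_{j}(c))\big)=-\mbox{sg}(u^{\varepsilon}-c)f_{j}^{\prime}(c)$, both uniformly bounded, I would conclude that $\partial_{c}\widetilde{\widetilde{m^{\varepsilon}}}=\sum_{k}\partial_{y_{k}}\partial_{c}F^{\varepsilon}_{k}$, where $(y_{1},\dots,y_{d+1})=(x_{1},\dots,x_{d},t)$ and $\big(F^{\varepsilon}_{k}\big)$ is uniformly bounded in $L^{\infty}$ and supported in $K$; in particular $\big(\partial_{c}\widetilde{\widetilde{m^{\varepsilon}}}\big)$ is bounded and fixed-compactly supported in $W^{-2,\rho}\big(\R^{d}\times\R\times(0,\infty)\big)$ for every $1\le\rho\le\infty$. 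Following \cite[p.\,178]{Lions} I would then set
\[
g^{\varepsilon}:=\big(-\Delta_{x,t}+I\big)^{-1/2}\big(-\Delta_{c}+I\big)^{-r/2}\,\frac{\partial}{\partial c}\widetilde{\widetilde{m^{\varepsilon}}},
\]
the negative powers being the inverse Bessel potentials. Since the definitions of these operators in \cite{Stein} and \cite{Samko} coincide, $g^{\varepsilon}$ is an unambiguously defined tempered distribution, and, applying $(-\Delta_{x,t}+I)^{1/2}(-\Delta_{c}+I)^{r/2}$, the required identity holds in $\mathcal{D}^{\prime}\big(\R^{d}\times\R\times(0,\infty)\big)$ by construction; it remains only to show that $\big(g^{\varepsilon}\big)$ lies in a compact subset of $L^{p}$.

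For that I would write $g^{\varepsilon}=\sum_{k}\big[(-\Delta_{x,t}+I)^{-1/2}\partial_{y_{k}}\big]\big[(-\Delta_{c}+I)^{-r/2}\partial_{c}\big]F^{\varepsilon}_{k}$. The operator $(-\Delta_{x,t}+I)^{-1/2}\partial_{y_{k}}$ is a Bessel--Riesz transform in $(x,t)$, bounded on $L^{p}$ for $1<p\le2$ by the Mikhlin--H\"{o}rmander theorem, and $(-\Delta_{c}+I)^{-r/2}\partial_{c}$ is the $c$-multiplier with symbol $i\zeta(1+|\zeta|^{2})^{-r/2}$, bounded on $L^{p}$ and, because $r-1>\tfrac{d+2}{p}\ge1$, smoothing of positive order $r-1$ in $c$. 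Since the $F^{\varepsilon}_{k}$ are uniformly bounded in $L^{\infty}$ with support in the fixed compact $K$, this already gives that $\big(g^{\varepsilon}\big)$ is bounded in $L^{p}$, tight (the Calder\'{o}n--Zygmund and Bessel kernels decay away from $K$), and equicontinuous under $c$-translations (from the order-$(r-1)$ smoothing in $c$).

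The remaining and genuinely delicate point --- equicontinuity of $\big(g^{\varepsilon}\big)$ under $(x,t)$-translations, equivalently the anisotropic mapping property of $(-\Delta_{x,t}+I)^{-1/2}(-\Delta_{c}+I)^{-r/2}$ sending the bounded, compactly supported family $\big(\partial_{c}\widetilde{\widetilde{m^{\varepsilon}}}\big)$ into a relatively compact subset of $L^{p}$ --- is exactly where $r>1+\tfrac{d+2}{p}$ is used: it provides enough spare $c$-smoothing to absorb, through a Littlewood--Paley decomposition, the borderline order-one smoothing available in the $d+1$ space-time variables. Granting this, the Riesz--Fr\'{e}chet--Kolmogorov criterion yields that $\big(g^{\varepsilon}\big)$ is relatively compact in $L^{p}\big(\R^{d}\times\R\times(0,\infty)\big)$, which is the assertion of the lemma, the uniform compactness of the supports being what upgrades the local compactness in \cite{Lions} to a compact subset of $L^{p}$ on the full space. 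I expect this last anisotropic Bessel-potential estimate to be the main obstacle.
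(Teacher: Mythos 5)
There is a genuine gap, and it sits exactly at the decisive point. Note first that the paper itself does not prove this lemma: it is taken verbatim from \cite[p.178]{Lions}, the only verification offered being that the definitions of the inverse Bessel potentials in \cite{Stein} and \cite{Samko} coincide, so a blind proof must in effect reproduce the Lions--Perthame--Tadmor argument in full. Your proposal sets up the right object, $g^{\varepsilon}:=\left(-\Delta_{x,t}+I\right)^{-1/2}\left(-\Delta_{c}+I\right)^{-r/2}\partial_{c}\widetilde{\widetilde{m^{\varepsilon}}}$, and the identity in $\mathcal{D}^{\prime}$ is indeed immediate by construction; the boundedness of $\left(g^{\varepsilon}\right)$ in $L^{p}$ and the $c$-translation equicontinuity are also plausible as you argue them. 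But the relative compactness in $L^{p}$ --- which is the entire content of the lemma --- is precisely the step you label ``granting this.'' Moreover, the factorization you propose cannot deliver it: writing $g^{\varepsilon}=\sum_{k}\bigl[(-\Delta_{x,t}+I)^{-1/2}\partial_{y_{k}}\bigr]\bigl[(-\Delta_{c}+I)^{-r/2}\partial_{c}\bigr]F^{\varepsilon}_{k}$ puts all the smoothing into the $c$-variable, while the $(x,t)$-factor is a Calder\'on--Zygmund-type operator of net order zero; applied to a family merely bounded in $L^{\infty}$ with fixed compact support it gives boundedness, not $(x,t)$-equicontinuity, and no Littlewood--Paley bookkeeping can transfer smoothing from $c$ to $(x,t)$ through a pure tensor-product symbol. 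So the ``main obstacle'' you flag is not a technical remainder but the theorem itself.

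The standard way to close it (and the way it is done in \cite{Lions}) uses the measure structure you established in Propositions \ref{Deg.MeasureBounded.1}--\ref{Deg.Measure.Correction.Proposition3} but then abandon: the family $\widetilde{\widetilde{m^{\varepsilon}}}$ is bounded in total variation and supported in a fixed compact set $K$, and bounded subsets of $\mathcal{M}(K)$ embed \emph{compactly} into $W^{-s,q}\left(\mathbb{R}^{d+2}\right)$ for $s>(d+2)\left(1-\tfrac{1}{q}\right)$; one then checks that $\left(-\Delta_{x,t}+I\right)^{-1/2}\left(-\Delta_{c}+I\right)^{-r/2}\partial_{c}$ maps this negative-order Sobolev space boundedly into $L^{p}$, the bookkeeping of orders (one derivative lost to $\partial_{c}$, one gained in $(x,t)$, $r$ gained in $c$, against the deficit $s$) being exactly where the hypothesis $r>1+\frac{d+2}{p}$ is consumed. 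Compactness of $\left(g^{\varepsilon}\right)$ then follows from continuity of a fixed bounded operator on a relatively compact set, with the uniform compact support giving compactness in $L^{p}$ of the whole space rather than merely $L^{p}_{loc}$. Without either this embedding argument or a genuine proof of your anisotropic estimate, the proposal does not establish the lemma.
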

Since we are interested in solving IBVP \eqref{regularized.IBVP} in the domain $\Omega_{T}$, we denote
\begin{equation}\nonumber
\widetilde{g^{\varepsilon}}(x,c,t):=\begin{cases}
g^{\varepsilon}(x,c,t)\,\,\mbox{if}\,\,(x,c,t)\in\Omega\times\left(-\|u_{0}\|_{L^{\infty}\left(\Omega\right)},\|u_{0}\|_{L^{\infty}\left(\Omega\right)}\right)\times(0,T),\nonumber\\
0\,\,\,\,\mbox{Otherwise}.\nonumber
\end{cases}
\end{equation}
as $\Omega\times\left\{-\|u_0\|_{L^{\infty}\left(\Omega\right)}\right\}\times (0,T)$, $\Omega\times\left\{\|u_0\|_{L^{\infty}\left(\Omega\right)}\right\}\times (0,T)$ are measure zero sets in $\mathbb{R}^{d+2}$.\\
\begin{theorem}\label{Degenerate.Compactness.theorem}
{\rm The quasilinear viscous approximations $\left(u^{\varepsilon}\right)$ is $L^{1}\left(\Omega_{T}\right)$ compact.}	
\end{theorem}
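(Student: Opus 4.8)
The plan is to verify the hypotheses of the velocity averaging lemma (Theorem \ref{VelocityAveragingLemma.12}) for the extended kinetic function $h=\widetilde{\chi^{\varepsilon}}$ and then read off the $L^{1}\left(\Omega_{T}\right)$ compactness of $\left(u^{\varepsilon}\right)$ from the compactness of the velocity averages. First I would fix $1<p\leq 2$ and $r>1+\frac{d+2}{p}$ and recall that, by Lemma \ref{Degenerate.Maximum.Principle}, $\|\widetilde{u^{\varepsilon}}\|_{L^{\infty}}\leq A$, so $\widetilde{\chi^{\varepsilon}}$ is supported in $\Omega\times[-A,A]\times(0,T)$ and takes values in $\{-1,0,1\}$; hence $\left(\widetilde{\chi^{\varepsilon}}\right)$ is bounded in $L^{p}\left(\mathbb{R}^{d}\times\mathbb{R}_{c}\times(0,\infty)\right)$ uniformly in $\varepsilon$ — this gives the bounded sequence $h$ required by the lemma. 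Next, the transport equation \eqref{degenerate.KineticFormulation.Equation612}, together with its extension to $\mathbb{R}^{d}\times\mathbb{R}\times(0,\infty)$ (the right-hand side of the extended equation being $\frac{\partial}{\partial c}\widetilde{\widetilde{m^{\varepsilon}_{1}}}$, bounded in $\mathcal{M}$ by Proposition \ref{Deg.Measure.Correction.Proposition3}), puts $\widetilde{\chi^{\varepsilon}}$ in exactly the form \eqref{Velocity.AveragingEquation1.A1}: by Lemma \ref{Lion.Parthame.Tadmor.p.178} we may write $\frac{\partial}{\partial c}\widetilde{\widetilde{m^{\varepsilon}_{1}}}=\left(-\Delta_{x,t}+I\right)^{\frac{1}{2}}\left(-\Delta_{c}+I\right)^{\frac{r}{2}}g^{\varepsilon}$ with $\left(g^{\varepsilon}\right)$ in a compact subset of $L^{p}\left(\mathbb{R}^{d}\times\mathbb{R}\times(0,\infty)\right)$, which is the $g$ of the lemma. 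The regularity requirement $f_{i}^{\prime}\in C^{l,\alpha}_{loc}$ follows from $f\in\left(C^{2}(\mathbb{R})\right)^{d}$ in Hypothesis G (for the relevant range of $r$), and the non-degeneracy hypothesis of Theorem \ref{VelocityAveragingLemma.12} is precisely Hypothesis G(d).

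Having checked all hypotheses, I would apply Theorem \ref{VelocityAveragingLemma.12} with test weights $\psi$. The natural choice is to take $\psi=\mathbf{1}_{[-A,A]}\cdot f'_{j}$-type weights and, most importantly, $\psi\equiv 1$ on $[-A-1,A+1]$ (truncated to have compact support), so that $\int_{\mathbb{R}}\widetilde{\chi^{\varepsilon}}(x,c,t)\,\psi(c)\,dc=\int_{-\infty}^{\infty}\chi_{\widetilde{u^{\varepsilon}}(x,t)}(c)\,dc=-\widetilde{u^{\varepsilon}}(x,t)$ on $\Omega_{T}$, using the identity $\int_{-\infty}^{\infty}\chi_{u^{\varepsilon}}(c)\,dc=-u^{\varepsilon}$ noted in the proof of Theorem \ref{Degenerate.measure.1}. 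The lemma then gives that $\widetilde{u^{\varepsilon}}$ lies in a compact subset of $L^{p}_{loc}\left(\mathbb{R}^{d}\times(0,\infty)\right)$; restricting to $\Omega_{T}$ and extracting an a.e.\ convergent subsequence, and using $\mathrm{Vol}(\Omega_{T})<\infty$ together with the uniform $L^{\infty}$ bound $\|u^{\varepsilon}\|_{L^{\infty}(\Omega_{T})}\leq A$ and the dominated convergence theorem, upgrades $L^{p}_{loc}$ convergence to $L^{1}\left(\Omega_{T}\right)$ convergence. This establishes the claimed $L^{1}\left(\Omega_{T}\right)$ compactness.

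I expect the main obstacle to be bookkeeping around the extensions rather than any genuinely new analytic difficulty: one must check that the extension $\widetilde{\chi^{\varepsilon}}$ really does satisfy the transport equation on all of $\mathbb{R}^{d}\times\mathbb{R}\times(0,\infty)$ with right-hand side $\frac{\partial}{\partial c}\widetilde{\widetilde{m^{\varepsilon}_{1}}}$ (the boundary terms created by extending $u^{\varepsilon}$ by zero must be absorbed into the measure, which is why the measure-correction Propositions \ref{Deg.MeasureBounded.1}–\ref{Deg.Measure.Correction.Proposition3} were set up), and that the weight $\psi$ used to recover $\widetilde{u^{\varepsilon}}$ as a velocity average indeed satisfies the non-degeneracy condition — but on $\mathrm{supp}\,\psi\subseteq[-A-1,A+1]$ this is again just Hypothesis G(d). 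The remaining step, passing from $L^{p}_{loc}$ to $L^{1}\left(\Omega_{T}\right)$, is routine given the a priori $L^{\infty}$ bound and the finiteness of the domain. Since all the structural inputs have already been assembled in Sections \ref{NonDegenerate.Section.1}–\ref{NonDegenerate.Section.2}, the proof is essentially an invocation of Theorem \ref{VelocityAveragingLemma.12} followed by the standard interpolation/dominated-convergence argument, mirroring the corresponding argument in \cite{Ramesh_Mondal}.
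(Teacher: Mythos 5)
Your proposal follows essentially the same route as the paper: extend $u^{\varepsilon}$ and $\chi^{\varepsilon}$, use the kinetic equation \eqref{degenerate.KineticFormulation.Equation612} together with Lemma \ref{Lion.Parthame.Tadmor.p.178} and the measure bounds of Propositions \ref{Deg.MeasureBounded.1}--\ref{Deg.Measure.Correction.Proposition3} to put $h=\widetilde{\chi^{\varepsilon}}$ in the form \eqref{Velocity.AveragingEquation1.A1}, apply Theorem \ref{VelocityAveragingLemma.12} with a weight $\psi$ equal to one on the range of $u^{\varepsilon}$ so that the velocity average recovers $-\widetilde{u^{\varepsilon}}$, and then pass from $L^{p}_{loc}$ compactness to $L^{1}\left(\Omega_{T}\right)$ compactness via the uniform $L^{\infty}$ bound and $\mbox{Vol}\left(\Omega_{T}\right)<\infty$. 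The only differences are cosmetic (the paper takes $\psi=\chi_{[-\|u_{0}\|_{\infty},\|u_{0}\|_{\infty}]}$ rather than your truncated cutoff, and delegates the verification that $\widetilde{\chi^{\varepsilon}}$ satisfies \eqref{Velocity.AveragingEquation1.A1} to the proof of Theorem 4.5 of \cite{Ramesh_Mondal}), so your argument matches the paper's.
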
	
\begin{proof}
Following the proof of Theorem 4.5 of \cite{Ramesh_Mondal}, we conclude that $h=\widetilde{\chi^{\varepsilon}}(x,c;t)$ satisfies \eqref{Velocity.AveragingEquation1.A1}.  An application of Theorem \ref{VelocityAveragingLemma.12} with $h=\chi_{\widetilde{u^{\varepsilon}}}(c)$, $g=\widetilde{g^{\varepsilon}}$ and $\psi(c)=\chi_{[-\|u_{0}\|_{\infty},\|u_{0}\|_{\infty}]}(c)$, we conclude the proof of Theorem \ref{Degenerate.Compactness.theorem} as $\mbox{Vol}\left(\Omega_{T}\right)<\infty$ $\blacksquare$	
\end{proof}
\section{Entropy Solution as a Limit}\label{Degenerate.Section.3}
In this section, we show that the {\it a.e.} limit of quasilinear viscous approximations is the unique entropy solution
for scalar conservation laws in the sense of Otto \cite{MR1387428}. We now give some basic definitions before introducing the concepts of entropy solution. 
\begin{definition}[\textbf{Entropy}]\label{Otto.entropy}
	{\rm Let $\eta$ be a convex function. Then $\eta$ is said to be an entropy if for $j\in\left\{1,2,\cdots,d\right\}$, there exist function 
		$q_{j}$ such that for all $z\in\mathbb{R}$, the following equality 
		$$\eta^{\prime}(z)\,f_{j}^{\prime}(z)= q_{j}^{\prime}(z)$$
		hold.}
\end{definition}
Denote $Q:=\left(Q_{1},Q_{2},\cdots,Q_{d}\right).$
\begin{definition}[\textbf{Boundary-Entropy}]\cite[p.103]{Necas}\label{Otto.Bentropy}
	{\rm Let $H, Q\in C^{2}(\mathbb{R}^{2})$. A pair $\left(H, Q\right)$ is called a boundary entropy-entropy flux pair if for 
		$w\in\mathbb{R}$, $\left(H(.,w), Q(.,w)\right)$ is an entropy-entropy flux pair in the sense of Definition \ref{Otto.entropy}
		and $H, Q$ satisfy
		$$ H(w,w)=0,\,\, Q(w,w)=0,\,\,\partial_{1}\,H(w,w)=0,$$
		where $\partial_{1}H$ denotes the partial derivative with respect to the first variable.}
\end{definition}
\begin{remark}\cite[p.104]{Necas}\label{Boundary.Entropy.F.Otto.rmk1}
	Let $k\in\mathbb{R}$ be arbitary but fixed and let $l\in\mathbb{N}$. Define the entropy-entropy flux pair $\left(\eta_{l},q_{l}\right)$ by
	\begin{eqnarray}\nonumber\\
	\eta_{l}(z)&:=&\left(\left(z-k\right)^{2}+\left(\frac{1}{l}\right)^2\right)^{\frac{1}{2}}-\frac{1}{l},\nonumber\\
	q_{l}(z)&:=& \int_{k}^{z}\,\eta_{l}^{\prime}(r)\,f^{\prime}(r)\,dr.\nonumber
	\end{eqnarray}
	Then $\left(\eta_{l},q_{l}\right)$ converges uniformly to a non-smooth entropy-entropy flux pair $\left(|z-k|,\,F(z,k)\right)$ as $l\to\infty$, where
	$$F(z,k)=\mbox{sg}\left(z-k\right)\,\,\left(f(z)-f(k)\right).$$
	Denote the closed interval between real numbers $a,b$ by 
	$$\mathcal{I}[a,b]:=\left[\mbox{min}(a,b),\mbox{max}(a,b)\right].$$
	Define a boundary entropy-entropy flux pair $\left(H_{l},Q_{l}\right)$ by
	\begin{eqnarray}\nonumber
	H_{l}(z,w)&:=&\left(\left(\mbox{dist}\left(z,\mathcal{I}(w,k)\right)\right)^{2}+\left(\frac{1}{l}\right)^2\right)^{\frac{1}{2}}-\frac{1}{l},\nonumber\\
	Q_{l}(z,w)&:=& \int_{w}^{z}\,\partial_{1}H_{l}(r,w)\,f^{\prime}(r)\,dr.\nonumber
	\end{eqnarray}
	The sequence $\left(H_{l},Q_{l}\right)$ converges uniformly to $\left(\mbox{dist}(z,\mathcal{I}[w,k]),\mathcal{F}(z,w,k)\right)$ as $l\to\infty$,
	where $\mathcal{F}\in \left(C(\mathbb{R}^{3})\right)^{d}$ is given by 
	\[
	\mathcal{F}(z,w,k):=
	\begin{cases}
	f(w)-f(k) & \text{if $z\leq w\leq k$}\\
	0         & \text{if $w\leq z\leq k$}\\
	f(z)-f(k) & \text{if $w\leq k\leq z$}\\
	\vspace{0.1cm}\\
	f(k)-f(z) & \text{if $z\leq k\leq w$}\\
	0         & \text{if $k\leq z\leq w$}\\
	f(z)-f(w) & \text{if $k\leq w\leq z$}.\\
	\end{cases}
	\]
\end{remark}
\begin{definition}[\textbf{F. Otto}]\cite[p.103]{Necas}\label{F.Otto}
	Let $u_{0}\in L^{\infty}(\Omega)$ and $f\in\left(C^{1}(\mathbb{R})\right)^{d}$. We say that $u$ is an entropy solution for IBVP
	\eqref{ivp.cl} if and only if $u\in L^{\infty}(\Omega_{T})$ and satisfies:
	\begin{enumerate}
		\item  the conservation laws and the entropy condition in the sense
		\begin{eqnarray}\label{definition.otto.eqn1}
		\int_{\Omega_{T}}\left(\eta(u)\,\frac{\partial\phi}{\partial t}\,+\,\displaystyle\sum_{j=1}^{d}q_{i}(u)\,\frac{\partial\phi}
		{\partial x_{j}}\right)\,dx\,dt\geq 0
		\end{eqnarray}
		for all $\phi\in\mathcal{D}(\Omega_{T}),\,\,\,\phi\geq 0$ and all entropy-entropy flux pairs $\left(\eta, q\right)$;
		\item the boundary condition in the sense 
		\begin{eqnarray}\label{definition.otto.eqn2}
		\mbox{}ess\displaystyle\lim_{h\to 0}\int_{0}^{T}\int_{\partial\Omega}\,Q\left(u(r+h\nu(r)),0\right)\cdot\nu(r)\,\beta(r)
		\,\,\,dr\geq 0,
		\end{eqnarray}
		for all $\beta\in L^{1}(\partial\Omega\times(0,T)),\,\,\,\beta\geq 0$ {\it a.e.} and all boundary entropy fluxes $Q$;
		\item the initial condition $u_{0}\in L^{\infty}(\Omega)$ in the sense
		\begin{eqnarray}\label{definition.otto.eqn3}
		\mbox{ess}\displaystyle\lim_{t\to 0}\int_{\Omega}\left|u(x,t)-u_{0}(x)\right|\,\,dx=0.
		\end{eqnarray}
	\end{enumerate}
\end{definition}
\textbf{Proof of Theorem \ref{Deg.paper2.compensatedcompactness.theorem1}:} We prove Theorem \ref{Deg.paper2.compensatedcompactness.theorem1} in four steps for any dimension $d\in\mathbb{N}$. In Step 1, we show that the {\it a.e.} limit of a  subsequence of solutions to generalized viscosity problem \eqref{De.regularized.IBVP} is a weak solution of scalar conservation law \eqref{ivp.cl}. In Step 2, a weak solution satifies the inequality \eqref{definition.otto.eqn1}. In Step 3, we prove \eqref{definition.otto.eqn2} and \eqref{definition.otto.eqn3}. In Step 4, we conclude uniqueness of entropy solutions of IBVP for conservation laws \eqref{ivp.cl}.\\
	\textbf{Step 1:} Let $\phi \in \mathcal{D}(\Omega\times [0, T))$. Multiplying the equation \eqref{regularized.IBVP.a} by $\phi$, integrating over $\Omega_{T}$ and using integrating by parts formula, we get
	\begin{equation}\label{Deg.eqnchap502}
	\begin{split}
	\int_{0}^{T}\,\int_{\Omega}u^{\varepsilon,\delta}\,\frac{\partial \phi}{\partial t}\,dx\,dt - \varepsilon\,\displaystyle\sum_{j=1}^{d}\int_{0}^{T}\,\int_{\Omega}\,B(u^{\varepsilon,\delta})\,\frac{\partial u^{\varepsilon,\delta}}{\partial x_{j}}\,\frac{\partial \phi}{\partial x_{j}}\,dx\,dt +\displaystyle\sum_{j=1}^{d}\int_{0}^{T}\,\int_{\Omega}\,f_{j}(u^{\varepsilon,\delta})\,\frac{\partial \phi}{\partial x_{j}}\,dx\,dt\\ =\int_{\Omega}u^{\varepsilon,\delta}(x,0)\,\phi(x,0)\,dx\,dt +\delta\,\varepsilon\,\displaystyle\sum_{j=1}^{d}\int_{0}^{T}\,\int_{\Omega}\,B(u^{\varepsilon,\delta})\,\frac{\partial u^{\varepsilon,\delta}}{\partial x_{j}}\,\frac{\partial \phi}{\partial x_{j}}\,dx\,dt.
	\end{split}
	\end{equation}
	Observe that 
	\begin{equation}\label{Deg.LimitPass.1}
	\left|\displaystyle\sum_{j=1}^{d}\int_{0}^{T}\,\int_{\Omega}\,B(u^{\varepsilon,\delta})\,\frac{\partial u^{\varepsilon,\delta}}{\partial x_{j}}\,\frac{\partial \phi}{\partial x_{j}}\,dx\,dt\right|\leq \left\|\sqrt{B(u^{\varepsilon,\delta})}\,\frac{\partial u^{\varepsilon,\delta}}{\partial x_{j}}\right\|_{L^{2}\left(\Omega_{T}\right)}\,\left\|\,\frac{\partial\phi}{\partial x_{j}}\right\|_{L^{2}\left(\Omega_{T}\right)}
	\end{equation}
	Using \eqref{Deg.LimitPass.1},\,Theorem \ref{regularized.chap3thm1}, Theorem \ref{Deg.Compactness.lemma.1}, Lemma \ref{De.hypothesisDinitialdatalem} and Dominated convergence theorem, we pass to the limit as $\delta\to 0$ in \eqref{Deg.LimitPass.1} to obtain 
	\begin{equation}\label{eqnchap502}
	\begin{split}
	\int_{0}^{T}\,\int_{\Omega}u^{\varepsilon}\,\frac{\partial \phi}{\partial t}\,dx\,dt - \varepsilon\,\displaystyle\lim_{\delta\to 0}\left(\displaystyle\sum_{j=1}^{d}\int_{0}^{T}\,\int_{\Omega}\,B(u^{\varepsilon,\delta})\,\frac{\partial u^{\varepsilon,\delta}}{\partial x_{j}}\,\frac{\partial \phi}{\partial x_{j}}\,dx\,dt\right) +\displaystyle\sum_{j=1}^{d}\int_{0}^{T}\,\int_{\Omega}\,f_{j}(u^{\varepsilon})\,\frac{\partial \phi}{\partial x_{j}}\,dx\,dt\\ =\int_{\Omega}u_{0\varepsilon}(x,0)\,\phi(x,0)\,dx\,dt.
	\end{split}
	\end{equation}
	Applying Lemma \ref{Degenerate.Maximum.Principle} and Dominated Convergence Theorem, we pass to the limit as $\varepsilon\to 0$ to conclude  
	\begin{equation}\label{weaksolution.eqn1}
	\displaystyle\lim_{\varepsilon\to 0}\int_{0}^{T}\,\int_{\Omega}u^{\varepsilon}\,\frac{\partial \phi}{\partial t}\,dx\,dt =\int_{0}^{T}\,\int_{\Omega}u\,\frac{\partial \phi}{\partial t}\,dx\,dt,
	\end{equation}
	For $j=1,2,\cdots,d$,  proofs of 
	\begin{equation}\label{weaksolution.eqn2}
	\displaystyle\lim_{\varepsilon\to 0}\int_{0}^{T}\,\int_{\Omega}\,f_{j}(u^{\varepsilon})\,\frac{\partial \phi}{\partial x_{j}}\,dx\,dt = \int_{0}^{T}\,\int_{\Omega}\, f_{j}(u)\,\frac{\partial\phi}{\partial x_{j}}\,dx\,dt.
	\end{equation}
Applying Lemma \ref{regularized.chap3thm1} and Dominated convergence theorem, we conclude	
	\begin{equation}\label{weaksolution.eqn4}
	\int_{\Omega}u_{0\varepsilon}(x)\,\phi(x,0)\,dx\to \int_{\Omega} u(x,0)\,\phi(x,0)\,dx.
	\end{equation}
	Finally, for $j=1,2,\cdots,d$, we show that
	\begin{equation}\label{eqnchap504}
	\displaystyle\lim_{\varepsilon\to 0}\varepsilon\left(\displaystyle\lim_{\delta\to 0}\,\left(\int_{0}^{T}\,\int_{\Omega}\,B(u^{\varepsilon,\delta})\,\frac{\partial u^{\varepsilon,\delta}}{\partial x_{j}}\,\frac{\partial \phi}{\partial x_{j}}\,dx\,dt\right)\right) = 0.
	\end{equation}
	Since $\varepsilon\left(\displaystyle\lim_{\delta\to 0}\,\left(\int_{0}^{T}\,\int_{\Omega}\,B(u^{\varepsilon,\delta})\,\frac{\partial u^{\varepsilon,\delta}}{\partial x_{j}}\,\frac{\partial \phi}{\partial x_{j}}\,dx\,dt\right)\right)$ exists, therefore we have
	\begin{equation}\label{Deg.LimitPass.2}
	\displaystyle\lim_{\delta\to 0}\left|\varepsilon\left(\,\left(\int_{0}^{T}\,\int_{\Omega}\,B(u^{\varepsilon,\delta})\,\frac{\partial u^{\varepsilon,\delta}}{\partial x_{j}}\,\frac{\partial \phi}{\partial x_{j}}\,dx\,dt\right)\right)\right|=\left|\varepsilon\left(\displaystyle\lim_{\delta\to 0}\,\left(\int_{0}^{T}\,\int_{\Omega}\,B(u^{\varepsilon,\delta})\,\frac{\partial u^{\varepsilon,\delta}}{\partial x_{j}}\,\frac{\partial \phi}{\partial x_{j}}\,dx\,dt\right)\right)\right|.
	\end{equation}
	Appying \eqref{Deg.LimitPass.1} and Theorem \ref{Deg.Compactness.lemma.1}, we get
	\begin{equation}\label{Deg.LimitPass.3}
	\left|\varepsilon\left(\displaystyle\lim_{\delta\to 0}\,\left(\int_{0}^{T}\,\int_{\Omega}\,B(u^{\varepsilon,\delta})\,\frac{\partial u^{\varepsilon,\delta}}{\partial x_{j}}\,\frac{\partial \phi}{\partial x_{j}}\,dx\,dt\right)\right)\right|\leq \sqrt{\varepsilon}\,A\,\sqrt{\frac{\mbox{Vol}\left(\Omega\right)}{2}}\,\left\|\,\frac{\partial\phi}{\partial x_{j}}\right\|_{L^{2}\left(\Omega_{T}\right)}.	
	\end{equation}	
	Therefore we conclude \eqref{eqnchap504}.
	Equations \eqref{weaksolution.eqn1},\eqref{weaksolution.eqn2},\eqref{weaksolution.eqn4},\eqref{eqnchap504} together give 
	\begin{equation}\label{eqnchap512}
	\int_{0}^{T}\,\int_{\Omega}u\,\frac{\partial \phi}{\partial t}\,dx\,dt +\displaystyle\sum_{j=1}^{d}\int_{0}^{T}\,\int_{\Omega} f_{j}(u)\,\frac{\partial \phi}{\partial x_{j}}\,dx \,dt =\int_{\Omega}u_{0}(x)\,\phi(x,0)\,dx\,dt.
	\end{equation}
	Therefore $u$ is a weak solution of initial value problem for conservation law \eqref{ivp.cl}.\\
\textbf{Step 2:}
For $k\in\mathbb{R}$, Kruzhkov's entropy-entropy flux pairs are given by
\begin{eqnarray}\label{chap70eqn2}
\eta(u) &=& |u-k|,\nonumber\\
q(u) &=& \mbox{sg}(u-k)\,\,\left(f(u)-f(k)\right),
\end{eqnarray}
where for $j=1,2,\cdots,d$, $\eta$ and $q =\left(q_{1},q_{2},\cdots,q_{d}\right)$ satisfy the following relation
$$\eta^{'}(u)f_{j}^{'}(u)= q_{j}^{'}(u).$$
Multiplying \eqref{regularized.IBVP.a} by $\eta^{'}(u^{\varepsilon,\delta})$ and applying chain rule, we get
\begin{equation}\label{chap70eqn4}
\frac{\partial \eta(u^{\varepsilon,\delta})}{\partial t} + \displaystyle\sum_{j=1}^{d}\frac{\partial q_{j}(u^{\varepsilon,\delta})}{\partial x_{j}}  = \varepsilon\,\displaystyle\sum_{j=1}^{d} \eta^{'}(u^{\varepsilon,\delta}) \frac{\partial }{\partial x_{j}}\left(B(u^{\varepsilon,\delta})\frac{\partial u^{\varepsilon,\delta}}{\partial x_{j}}\right). 
\end{equation}
Let $\phi\in \mathcal{D}(\Omega_{T})$ and $\phi\geq 0$. Multiplying \eqref{chap70eqn4} by $\phi$, integrating over $\Omega_{T}$ and using integration by parts formula, we obtain
\begin{equation}\label{Deg.chap70eqn5}
-\int_{\Omega_{T}}\eta(u^{\varepsilon,\delta})\frac{\partial\phi}{\partial t}\,dx dt -\displaystyle\sum_{j=1}^{d}\int_{\Omega_{T}} q_{j}(u^{\varepsilon,\delta})\frac{\partial\phi}{\partial x_{j}}\,dx\,dt = \varepsilon\,\displaystyle\sum_{j=1}^{d}\int_{\Omega_{T}}\eta^{'}(u^{\varepsilon,\delta})\phi \frac{\partial }{\partial x_{j}}\left(B(u^{\varepsilon,\delta})\frac{\partial u^{\varepsilon,\delta}}{\partial x_{j}}\right)\,dx dt.
\end{equation}
Passing to the limit as $\delta\to 0$ in \eqref{Deg.chap70eqn5}, we get
\begin{equation}\label{chap70eqn5}
\begin{split}
-\int_{\Omega_{T}}\eta(u^{\varepsilon})\frac{\partial\phi}{\partial t}\,dx dt -\displaystyle\sum_{j=1}^{d}\int_{\Omega_{T}} q_{j}(u^{\varepsilon})\frac{\partial\phi}{\partial x_{j}}\,dx\,dt\hspace{5cm}\\ = \varepsilon\,\displaystyle\lim_{\delta\to 0}\left(\displaystyle\sum_{j=1}^{d}\int_{\Omega_{T}}\eta^{'}(u^{\varepsilon,\delta})\phi \frac{\partial }{\partial x_{j}}\left(B(u^{\varepsilon,\delta})\frac{\partial u^{\varepsilon,\delta}}{\partial x_{j}}\right)\,dx dt\right).
\end{split}
\end{equation}
Taking $\limsup$ as $\varepsilon\to 0$ on both sides of \eqref{chap70eqn5}, we have
\begin{eqnarray}\label{chap70eqn6}
-\int_{\Omega_{T}}\eta(u)\frac{\partial\phi}{\partial t}\,dx dt -\displaystyle\sum_{j=1}^{d}\int_{\Omega_{T}} q_{j}(u)\frac{\partial\phi}{\partial x_{j}}\,dx dt\hspace{5cm}\nonumber\\ = \displaystyle\limsup_{\varepsilon\to 0}\left(\displaystyle\limsup_{\delta\to 0}\left(\displaystyle\sum_{j=1}^{d}\left(\varepsilon\int_{\Omega_{T}} \eta^{'}(u^{\varepsilon,\delta})\phi \frac{\partial }{\partial x_{j}}\left(B(u^{\varepsilon,\delta})\frac{\partial u^{\varepsilon,\delta}}{\partial x_{j}}\right)\,dx dt\right)\right)\right).
\end{eqnarray}
Let $G:\mathbb{R}\to\mathbb{R}$ be a $C^{\infty}$ function such that 
$$G(x) = |x|\,\,\mbox{for} |x|\geq 1\,\,\mbox{and}\,\,G^{''}\geq 0.$$
For $\gamma > 0$, define $G_{\gamma}:\mathbb{R}\to\mathbb{R}$ by 
$$G_{\gamma}(y) = \gamma G\left(\frac{(y-k)}{\gamma}\right)$$
so that $G_{\gamma}(y)\to |y-k|$ as $\gamma\to 0$, see\cite[p.72]{MR1304494}.  The function $G_{\gamma}$ is a convex function and $G^{'}_{\gamma}(v)\to \mbox{sg}(v-k)$ as $\gamma\to 0$. We can use the antiderivative of the sequence of approximations of sign function used in \cite{MR542510} as a choice for $G_{\gamma}$ for our computation as we do not need $C^{\infty}-$ function.\\
For $j=1,2,\cdots,d$, note that
\begin{eqnarray}\label{chap70eqn7}
\varepsilon \int_{\Omega_{T}} \eta^{\prime}(u^{\varepsilon,\delta})\phi \frac{\partial }{\partial x_{j}}\left(B(u^{\varepsilon,\delta})\frac{\partial u^{\varepsilon,\delta}}{\partial x_{j}}\right)\,dx dt &=&\varepsilon\displaystyle\lim_{\gamma\to 0}\int_{\Omega_{T}} G_{\gamma}^{'}(u^{\varepsilon,\delta})\phi \frac{\partial }{\partial x_{j}}\left(B(u^{\varepsilon,\delta})\frac{\partial u^{\varepsilon}}{\partial x_{j}}\right)\,dx dt.\nonumber\\
&=&\varepsilon\displaystyle\limsup_{\gamma\to 0}\int_{\Omega_{T}} G_{\gamma}^{'}(u^{\varepsilon,\delta})\phi \frac{\partial }{\partial x_{j}}\left(B(u^{\varepsilon,\delta})\frac{\partial u^{\varepsilon}}{\partial x_{j}}\right)\,dx dt.\nonumber\\
&\leq& -\varepsilon \displaystyle\displaystyle\liminf_{\gamma\to 0}\int_{\Omega_{T}} G_{\gamma}^{''}(u^{\varepsilon,\delta})\phi B(u^{\epsilon,\delta})\left( \frac{\partial u^{\varepsilon,\delta}}{\partial x_{j}}\right)^{2}\,dx dt\nonumber\\
&-&\varepsilon \displaystyle\liminf_{\gamma\to 0}\int_{\Omega_{T}} G_{\gamma}^{'}(u^{\varepsilon,\delta})\frac{\partial\phi}{\partial x_{j}} B(u^{\varepsilon,\delta})\frac{\partial u^{\varepsilon,\delta}}{\partial x_{j}}\, dx dt.\nonumber\\
&=& -\varepsilon \displaystyle\displaystyle\liminf_{\gamma\to 0}\int_{\Omega_{T}} G_{\gamma}^{''}(u^{\varepsilon,\delta})\phi B(u^{\epsilon,\delta})\left( \frac{\partial u^{\varepsilon,\delta}}{\partial x_{j}}\right)^{2}\,dx dt\nonumber\\
&-&\varepsilon \displaystyle\lim_{\gamma\to 0}\int_{\Omega_{T}} G_{\gamma}^{'}(u^{\varepsilon,\delta})\frac{\partial\phi}{\partial x_{j}} B(u^{\varepsilon,\delta})\frac{\partial u^{\varepsilon,\delta}}{\partial x_{j}}\, dx dt.
\end{eqnarray}
Therefore, we have
\begin{eqnarray}\label{Deg.Entropy.Equation10}
\varepsilon \int_{\Omega_{T}} \eta^{\prime}(u^{\varepsilon,\delta})\phi \frac{\partial }{\partial x_{j}}\left(B(u^{\varepsilon,\delta})\frac{\partial u^{\varepsilon,\delta}}{\partial x_{j}}\right)\,dx dt&\leq& -\varepsilon \displaystyle\displaystyle\liminf_{\gamma\to 0}\int_{\Omega_{T}} G_{\gamma}^{''}(u^{\varepsilon,\delta})\phi B(u^{\epsilon,\delta})\left( \frac{\partial u^{\varepsilon,\delta}}{\partial x_{j}}\right)^{2}\,dx dt\nonumber\\
&+&\left|\varepsilon \displaystyle\lim_{\gamma\to 0}\int_{\Omega_{T}} G_{\gamma}^{'}(u^{\varepsilon,\delta})\frac{\partial\phi}{\partial x_{j}} B(u^{\varepsilon,\delta})\frac{\partial u^{\varepsilon,\delta}}{\partial x_{j}}\, dx dt\right|.
\end{eqnarray}
Since $\left(\sqrt{\varepsilon}\|\sqrt{B\left(u^{\varepsilon,\delta}\right)}\frac{\partial u^{\varepsilon,\delta}}{\partial x_{j}}\|_{L^{2}(\Omega_{T})}\right)_{\varepsilon,\delta > 0}$ is bounded, for $j=1,2,\cdots,d$ and for all $\gamma>0$, there exists $M>0$ such that $\left|G_{\gamma}^{\prime}\left(u^{\varepsilon,\delta}\right)\right|\leq M$. \\
Since\,\,$\displaystyle\lim_{\gamma\to 0}\int_{\Omega_{T}} G_{\gamma}^{'}(u^{\varepsilon,\delta})\frac{\partial\phi}{\partial x_{j}} B(u^{\varepsilon,\delta})\frac{\partial u^{\varepsilon,\delta}}{\partial x_{j}}\, dx dt$\,\, exists, we get
$$\displaystyle\lim_{\gamma\to 0}\Big|\varepsilon\int_{\Omega_{T}} G_{\gamma}^{'}(u^{\varepsilon,\delta})\frac{\partial\phi}{\partial x_{j}} B(u^{\varepsilon,\delta})\frac{\partial u^{\varepsilon,\delta}}{\partial x_{j}}\, dx dt.\Big|=\Big|\varepsilon \displaystyle\lim_{\gamma\to 0}\int_{\Omega_{T}} G_{\gamma}^{'}(u^{\varepsilon,\delta})\frac{\partial\phi}{\partial x_{j}} B(u^{\varepsilon,\delta})\frac{\partial u^{\varepsilon,\delta}}{\partial x_{j}}\, dx dt\Big|.$$
Observe that
\begin{equation}\label{Deg.Entropy.Equation1}
\Big|\varepsilon\int_{\Omega_{T}} G_{\gamma}^{'}(u^{\varepsilon,\delta})\frac{\partial\phi}{\partial x_{j}} B(u^{\varepsilon,\delta})\frac{\partial u^{\varepsilon,\delta}}{\partial x_{j}}\, dx dt.\Big|\leq M\sqrt{\varepsilon}\left(\sqrt{\varepsilon}\left\|\sqrt{B\left(u^{\varepsilon,\delta}\right)}\frac{\partial u^{\varepsilon,\delta}}{\partial x_{j}}\right\|_{L^{2}(\Omega_{T})}\right)\,\left\|\frac{\partial\phi}{\partial x_{j}}\right\|_{L^{2}(\Omega_{T})}
\end{equation} 
Applying Theorem \ref{Deg.Compactness.lemma.1}, we get
\begin{equation}\label{Deg.Entropy.Equation2}
\Big|\varepsilon\int_{\Omega_{T}} G_{\gamma}^{'}(u^{\varepsilon,\delta})\frac{\partial\phi}{\partial x_{j}} B(u^{\varepsilon,\delta})\frac{\partial u^{\varepsilon,\delta}}{\partial x_{j}}\, dx dt.\Big|\leq M\sqrt{\varepsilon}\,A\,\sqrt{\frac{\mbox{Vol}\left(\Omega\right)}{2}}\,\left\|\frac{\partial\phi}{\partial x_{j}}\right\|_{L^{2}(\Omega_{T})}
\end{equation} 
We can easily deduce that
\begin{equation*}
\displaystyle\limsup_{\delta\to 0}\Big|\varepsilon \displaystyle\lim_{\gamma\to 0}\int_{\Omega_{T}} G_{\gamma}^{'}(u^{\varepsilon,\delta})\frac{\partial\phi}{\partial x_{j}} B(u^{\varepsilon,\delta})\frac{\partial u^{\varepsilon,\delta}}{\partial x_{j}}\, dx dt.\Big|\to 0\,\,\mbox{as}\,\,\varepsilon\to 0.
\end{equation*}
Hence 
\begin{equation}\label{Entropy.Otto.Deg.eqn3}
\displaystyle\limsup_{\varepsilon\to 0}\left(\displaystyle\limsup_{\delta\to 0}\Big|\varepsilon \displaystyle\lim_{\gamma\to 0}\int_{\Omega_{T}} G_{\gamma}^{'}(u^{\varepsilon,\delta})\frac{\partial\phi}{\partial x_{j}} B(u^{\varepsilon,\delta})\frac{\partial u^{\varepsilon,\delta}}{\partial x_{j}}\, dx dt.\Big|\right)= 0.
\end{equation}
Observe that
\begin{equation}\label{entropy.solution.eqn2}
-\displaystyle\liminf_{\varepsilon\to 0}\left(\displaystyle\liminf_{\delta\to 0}\left(\varepsilon\displaystyle\sum_{j=1}^{d} \displaystyle\lim_{\gamma\to 0}\int_{\Omega_{T}} G_{\gamma}^{''}(u^{\varepsilon,\delta})\phi B(u^{\epsilon,\delta})\left( \frac{\partial u^{\varepsilon,\delta}}{\partial x_{j}}\right)^{2}\,dx dt\right)\right)\leq 0.
\end{equation}
Using \eqref{Entropy.Otto.Deg.eqn3}, \eqref{entropy.solution.eqn2} in \eqref{chap70eqn6} together with \eqref{Deg.Entropy.Equation10}, we have required inequality \eqref{definition.otto.eqn1}.\\
\textbf{Step 3:} The proofs of \eqref{definition.otto.eqn2} and \eqref{definition.otto.eqn2} follow from the proof of Theorem 1.1 of \cite{Ramesh_Mondal}.
\\
\textbf{Step 4:} The proof of uniqueness of entropy solution follows from 
\cite[p.113]{Necas} $\blacksquare$
\newpage
%\bibliography{Ramesh-bibfile}
\bibliographystyle{plain}

%\end{document}
\end{document}